\documentclass[10pt]{article}

\usepackage{amsmath}
\usepackage{amsthm}
\usepackage{amssymb}
\usepackage{enumitem}
\usepackage{multicol}
\usepackage[active]{srcltx}
\usepackage{algorithm}
\usepackage{array}
\usepackage[dvipsnames,svgnames]{xcolor}
\usepackage{tikz}
\usetikzlibrary{positioning,arrows.meta}
\usepackage{graphicx}
\usepackage{subcaption}
\usepackage{pgfplots}
\usepackage{xcolor}
\usepackage{cancel}
\usepackage{hyperref}
\usepackage[top=35mm, bottom=25mm, left=20mm, right=20mm]{geometry}
\usepackage{comment}

\providecommand{\U}[1]{\protect \rule{.1in}{.1in}}
\newtheorem{theorem}{Theorem}

\newtheorem{corollary}[theorem]{Corollary}
\newtheorem{fact}[theorem]{Fact}

\newtheorem{definition}[theorem]{Definition}
\newtheorem{example}[theorem]{Example}

\newtheorem{lemma}[theorem]{Lemma}

\newtheorem{proposition}[theorem]{Proposition}
\newtheorem{remark}[theorem]{Remark}

\newtheorem{assumption}[theorem]{Assumption}

\newcommand\R{\mathbb{R}}
\newcommand\Rinf{\overline{\mathbb{R}}}
\newcommand\dom[1]{ \bs{{\rm dom}}(#1)} 
\newcommand\dist{ \bs{{\rm dist}}} 
\newcommand\gh{\psi} 

\newcommand\prox[3]{\bs{{\rm Prox}}_{#2#1}^{#3}}

\newcommand\ov[1]{\overline{#1}}
\newcommand\mb{\mathbf{B}}
\newcommand\bs[1]{\boldsymbol{#1}}
\newcommand\argmin[1]{\bs{\arg\min}_{#1}}
\newcommand\argmint[1]{\mathop{\bs{\arg\min}}\limits_{#1}}
\newcommand\Nz{\mathbb{N}_0}

\newcommand{\ReLU}{\operatorname{ReLU}}
\newcommand{\RelErr}{\operatorname{RelErr}}
\newcommand\Fix{{\bs{\rm Fix}}}
\newcommand\inft[1]{\mathop{\bs{\inf}}\limits_{#1}}
\newcommand\liminft[1]{\mathop{\bs{\liminf}}\limits_{#1}}
\newcommand\maxt[1]{\mathop{\bs{\max}}\limits_{#1}}
\let\oldliminf\liminf
\renewcommand{\liminf}{\bs{\oldliminf}}
\let\oldlimsup\limsup
\renewcommand{\limsup}{\bs{\oldlimsup}}
\let\oldmax\max
\renewcommand{\max}{\bs{\oldmax}}
\let\oldmin\min
\renewcommand{\min}{\bs{\oldmin}}
\let\oldsup\sup
\renewcommand{\sup}{\bs{\oldsup}}
\let\oldinf\inf
\renewcommand{\inf}{\bs{\oldinf}}
\let\oldlim\lim
\renewcommand{\lim}{\bs{\oldlim}}

\title{{\bf Quasar-Convex Optimization: Fundamental Properties and High-Order Proximal-Point Methods}}
\author{Masoud Ahookhosh\thanks{Department of Mathematics, University of Antwerp, Antwerp, Belgium. Email: masoud.ahookhosh@uantwerp.be}, 
\and
Jos\'e M. M. de Brito \thanks{School of Science, Great Bay University; Great Bay Institute for Advanced Study,
Dongguan 523000, Guangdong Province,  People's Republic of China; and Instituto Federal do Piau\'{\i}, S\~ao Raimundo Nonato, Piau\'{\i}, Brazil. E-mail: jose.brito@ifpi.edu.br}
\and
Alireza Kabgani\thanks{Department of Mathematics, University of Antwerp, Antwerp, Belgium. Email: alireza.kabgani@uantwerp.be}
\and  
Felipe Lara\thanks{Instituto de Alta Investigación (IAI), Universidad de Tarapacá, Arica, Chile. Email: flarao@academicos.uta.cl}
\and
Jinyun Yuan\thanks{School of Science, Great Bay University; Great Bay Institute for Advanced Study,
Dongguan 523000, Guangdong Province,  People's Republic of China. Email: yuanjy570522@126.com}
}

\begin{document}


\maketitle

\begin{abstract}
We study the optimization of (strongly) quasar-convex functions, a class that arises naturally in many machine learning and data science applications due to its favorable properties. The fundamental properties of this class are first developed, including its stability under standard calculus operations, growth conditions, and the absence of spurious critical points, which together imply a benign global geometry with no saddle points. Motivated by these properties, a class of proximal-point algorithms (HiPPA) with high-order regularization of order $p>1$ is introduced. Conditions are identified under which the iterates converge globally to minimizers, and a unified convergence analysis is provided with explicit rates and iteration complexity bounds under appropriate regularity assumptions. The results reveal a sharp transition in behavior with respect to the order $p$: for $p\in(1,2)$, the method achieves local linear convergence with complexity $\mathcal{O}(\log(\varepsilon^{-1}))$ when initialized sufficiently close to a minimizer; for $p=2$, it attains global linear convergence with the same complexity; and for $p>2$, it exhibits superlinear convergence with complexity $\mathcal{O}(\log\log(\varepsilon^{-1}))$, where $\varepsilon>0$ denotes the target accuracy. The theory is complemented with preliminary numerical experiments on selected machine learning problems, which illustrate the effectiveness of the proposed methods and are consistent with the theoretical findings.

\noindent 

{\small}

\medskip

\noindent{\small \emph{Keywords}: Nonsmooth and nonconvex optimization; Quasar-convex functions; High-order proximal-point method; (Super)linear convergence rate; Complexity; Generalized linear model; Robust multi-task regression}

\medskip

\noindent {\it Mathematics Subject Classification:} 49J52; 65K05; 90C26.
\end{abstract}

\section{Introduction}\label{sec:intro}
This paper addresses the optimization problem
\begin{equation}\label{eq:mainproblem}
\mathop{\bs\min}\limits_{x\in \R^n}\ h(x),
\end{equation}
subject to the following basic assumptions. 
\begin{assumption}[Basic assumptions]\label{ass:basic} In problem \eqref{eq:mainproblem}, we assume that
\begin{enumerate}[label=(\textbf{\alph*}), font=\normalfont\bfseries, leftmargin=0.7cm]
\item  $h: \R^n\to\Rinf:=\R\cup \{+\infty\}$ is a proper, lower semicontinuous (lsc), (strongly) quasar-convex (see Definition~\ref{def:quasar}), and is possibly nonsmooth;
\item the set of minimizers is nonempty, with the corresponding minimal value $h^*$.
\end{enumerate}
\end{assumption}
\noindent The class of {\it (strongly) quasar-convex functions} arises in the modeling of a wide range of problems across several application domains, including machine learning \cite{farzin2025minimisation,wang2023continuized}, empirical risk minimization \cite{lee2016optimizing}, learning linear dynamical systems \cite{Hardt2018Gradient}, phase retrieval \cite{wang2023continuized}, and beyond. Although several studies have been carried out on basic properties of (strongly) quasar-convex functions in the smooth setting (e.g., \cite{guminov2023accelerated,Hardt2018Gradient,HADR,Hinder2020Near,wang2023continuized}), relatively little attention has been given to the nonsmooth setting (e.g., \cite{deBrito2025extending}). This motivates a more detailed study of this class of functions.

As a general nonconvex optimization problem, there are several methods to address the problem \eqref{eq:mainproblem}, however, among all other methods, we are interested in the high-order proximal-point method (HiPPA) (see Algorithm~\ref{ppa:sqcx:2}) given by
\begin{equation}\label{eq:hippa}\tag{HiPPA}
x^{k+1}\in \prox{h}{\beta_k}{p} (x^k):=\argmint{y\in \R^n} \left(h(y)+\frac{1}{p\beta_k}\Vert x^k- y\Vert^p\right),
\end{equation}
for $p>1$ and $\beta_k\geq \beta_{\bs\min}>0$. We note that this algorithm reduces to the classical proximal-point algorithm (PPA) for $p=2$ as a special case; see, e.g., \cite{martinet1970breve,martinet1972determination,rockafellar1976monotone}. Due to its notable theoretical and computational properties, such as simplicity and low memory requirements, PPA has garnered considerable attention over the past few decades in both convex and nonconvex settings; see, e.g., \cite{Bauschke17,Guler92,KecisThibault15,Parikh14,Poliquin96,rockafellar1976monotone}.

While PPA converges to a global minimizer in convex settings, in the nonconvex case it can, in general, be guaranteed to converge only to a \emph{stationary point}, which may correspond to either a local minimizer or a saddle point. In recent years, there has been increasing interest in developing optimization methods capable of escaping saddle points; see, e.g., \cite{bodard2025second,Davis2022Escaping,davis2022proximal,lee2019first,vlatakis2019efficiently}. This line of research is closely related to the identification of benign optimization landscapes (i.e., all local minimizers are also global minimizers) in many structured problems that arise across diverse application domains; see, e.g., \cite{Josz2023Certifying,ma2022blessing,McRae2024Benign,Vidal_Zhu_Haeffele_2022} and references therein. As an alternative strategy, one can formulate problems using special function classes, such as convex or strongly quasiconvex functions, which inherently preclude the presence of saddle points; cf. \cite{aujol2024fista,grimmer2025some,iusem2018second,Iusem2022proximal,Iusem20204two-step,Kabgani2023,Lara2022strongly}. This motivates further investigation of generalized convexity classes that exclude saddle points and guarantee the convergence to local or global minimizers, which is our main motivation to study the class of quasar-convex functions and investigate its fundamental properties, including calculus, growth conditions, and nonconvex landscape. 


Global convergence ensures that iterates converge from arbitrary initial points, whereas the convergence rate characterizes how rapidly this convergence occurs. As such, convergence rates play a central role in the analysis of optimization methods for problems of the form \eqref{eq:mainproblem}, particularly in high- and large-scale settings. For the classical PPA (scheme \eqref{eq:hippa} with $p=2$), sublinear convergence is typical \cite{Gu2020Tight,Guler9291Convergence,Luque19849Asymptotic}, with linear or even superlinear rates under strong convexity \cite{rockafellar1976monotone}. Extending such guarantees to broader classes, including uniformly convex or strongly quasiconvex functions, is challenging, though PPA has been shown to converge to global minimizers in the latter case \cite[Theorem 10]{Lara2022strongly}. Recent work has further investigated the convergence rates and complexity of high-order proximal-point methods in both convex and nonconvex settings \cite{Ahookhosh23, Ahookhosh24, Kabgani24itsopt,Kabgani25First,Kabgani25itsdeal,Kabgani2025Moreau,nesterov2021inexact,Nesterov2023a}. Identifying function classes for which HiPPA guarantees global convergence to a global minimizer with fast rates and complexity has not been studied broadly, which further motivated the present study.

\vspace{-2mm}
\subsection{{\bf Contribution}}\label{subsec:contribution}
Our contributions can be summarized as follows:
\begin{description}
 \item[(i)] (\textbf{Fundamental properties of (strongly) quasar-convex functions}) We characterize the class of strongly quasar-convex functions and establish its associated calculus, encompassing addition, scalar multiplication, and composition. We also examine coercivity properties within this class. In particular, we study the landscape of this class of functions and especially show that every local minimizer is global, thus excluding the presence of spurious local minimizers and saddle points. Furthermore, we derive first-order characterizations and corresponding Polyak--{\L}ojasiewicz (PL) inequality, growth conditions, and error bounds for optimization problems in this framework.
    \item[(ii)] (\textbf{Convergence and complexity analysis of HiPPA}) We begin by presenting a convergence analysis of the sequence generated by HiPPA toward global minimizers of \eqref{eq:mainproblem}. Specifically, if $\gamma=0$ (i.e., quasar-convex case), then, for $p\in (1,2)$, HiPPA attains the convergence rate $\mathcal{O}(k^{1-p})$ and the complexity $\mathcal{O}(\varepsilon^{-1/(p-1)})$, for $p=2$, it gets the convergence rate $\mathcal{O}(k^{-1})$ and the complexity $\mathcal{O}(\varepsilon^{-1})$, and for $p>2$, it achieves the convergence rate $\mathcal{O}(k^{-p/2})$ and the complexity $\mathcal{O}(\varepsilon^{-2/p})$. More specifically, if $\gamma>0$ (i.e., strongly quasar-convex case), then, for $p\in (1,2)$, HiPPA exhibits linear convergence with complexity $\mathcal{O}(\log(\varepsilon^{-1}))$ if it starts within a ball around the minimizer, for $p=2$, it attains linear convergence with the complexity $\mathcal{O}(\log(\varepsilon^{-1}))$, and for $p>2$, it achieves superlinear convergence with the complexity $\mathcal{O}(\log(\log(\varepsilon^{-1})))$. To the best of our knowledge, this constitutes the first comprehensive analysis of convergence rates and complexity of HiPPA in the context of strongly quasar-convex optimization.
    \item[(iii)] (\textbf{Applications}) We apply the HiPPA algorithm to two practical problems: (a) generalized linear models (GLMs) and (b) robust multi-task regression (RMTR). We show, in particular, that RMTR is nonsmooth and strongly quasar-convex, yet not strongly star-convex. To the best of our knowledge, HiPPA is the first method equipped with a rigorous convergence theory for RMTR.
\end{description}

\vspace{-2mm}
\subsection{{\bf Organization}}\label{subsec:Organization}
The remainder of this paper is structured as follows. We provide essential preliminaries and notations in Section~\ref{sec:preliminaries}. In Section~\ref{sec:03}, we study the fundamental properties of the class of nonsmooth quasar-convex functions. In Section~\ref{sec:04}, we study the convergence, rates, and complexity of the sequence generated by HiPPA. In Section~\ref{sec:glm_numerical}, we report our promising numerical results on a generalized linear model (GLM) and robust multi-task regression (RMTR). Finally, Section~\ref{sec:conclusion} delivers our concluding remarks.


\section{Preliminaries and notation}\label{sec:preliminaries}
Throughout this paper, $\R^n$ denotes the $n$-dimensional \textit{Euclidean space} equipped with the \textit{inner product} $\langle \cdot,\cdot \rangle$ and the associated norm $\|\cdot\| := \sqrt{\langle \cdot,\cdot \rangle}$. 
The \textit{open ball} centered at $\ov{x}\in \R^n$ with radius $r>0$ 
is denoted by $\mb(\ov{x}, r)$.
The set of \textit{nonnegative integers} is denoted by $\Nz := \mathbb{N}\cup\{0\}$. 
Given a matrix $A\in\R^{m\times n}$, we denote its range, null space, and smallest nonzero singular value by $\mathcal{R}(A)$, $\mathcal{N}(A)$, and $\sigma_{\min}(A)$, respectively.
For any $x,y,z\in\R^n$, the following identity will be used repeatedly: 
\begin{align}
 & ~~~~~ \langle x - z, y - x \rangle= \frac{1}{2} \lVert z - y \rVert^{2} -
 \frac{1}{2} \lVert x - z \rVert^{2} - \frac{1}{2} \lVert y - x \rVert^{2}.
 \label{3:points} 
\end{align}

Let $h: \mathbb{R}^{n} \rightarrow \overline{\mathbb{R}} := \mathbb{R}\cup\{+\infty\}$ be an extended-valued function. 
The \textit{effective domain} of $h$ is defined by
$\dom{h} := \{x \in \mathbb{R}^{n} : h(x) < +\infty\}$.
The function $h$ is said to be \textit{proper} if $\dom{h} \neq \emptyset$. 
For $\lambda \in \mathbb{R}$, the \textit{sublevel set} of $h$ at level $\lambda$ is
$S_{\lambda}(h) := \{x \in \mathbb{R}^{n} : h(x) \le \lambda\}$.
We denote by $\argmin{\mathbb{R}^{n}} h$ the set of minimizers of $h$.
A function $h$ is \textit{lower semicontinuous} (lsc henceforth) at $\ov{x} \in \mathbb{R}^{n}$ if for every sequence $\{x^k\}_{k\in\Nz} \subseteq \mathbb{R}^{n}$ with $x^k \to \ov{x}$, one has
$h(\ov{x}) \leq \liminf_{k \to +\infty} h(x^k)$. 
A point $\widehat{x}\in\R^n$ is called a \textit{cluster point} of a sequence $\{x^k\}_{k\in\Nz}$ if there exists an infinite subset $J\subseteq\Nz$ such that $x^j\to \widehat{x}$ as $j\to \infty$ with $j\in J$.

Let us recall the Clarke directional derivative, the Clarke subdifferential, and the notion of Clarke critical points, which will be used in the sequel.
Let $h:\R^n \to \overline{\R}$ be proper and locally Lipschitz around $x\in\dom h$.
The Clarke directional derivative of $h$ at $x$ in direction $d \in \mathbb{R}^n$ is defined by
\[
h^\circ (x; d) :=\mathop{\limsup}\limits_{\substack{y \to x \\ t\downarrow 0}}\frac{h(y+td) - h(y)}{t},
\]
 and the Clarke subdifferential of $h$ at $x$ is given by
 \begin{equation*}
  \partial^{C} h(x) := \{v \in \mathbb R^n: \, \langle v, d \rangle \le h^\circ(x; d), ~ \forall ~ d \in \mathbb R^n\},
 \end{equation*} 
By convention, $\partial^C h(x):=\emptyset$ for $x\notin\dom h$. Moreover,
\begin{equation}\label{prop:direc}
 h^{\circ} (x; d) = \maxt{\xi \in \partial^{C} h(x)} \langle \xi, d \rangle, \qquad \forall ~ d \in \mathbb R^{n}.
\end{equation} 
 A point $x \in \dom h$ is called a Clarke critical point of $h$ if $0 \in \partial^{C} h(x)$.

A function $h: \mathbb{R}^{n} \rightarrow \overline{\mathbb{R}}$ with convex domain is said to be:
\begin{enumerate}[label=(\textbf{\alph*}), font=\normalfont\bfseries, leftmargin=0.7cm]
\item\label{def:st:conv} \textit{strongly convex} on $\dom{h}$ if there exists $\gamma > 0$ such that, for all $x,y \in \dom{h}$ and all $\lambda \in [0,1]$,
 \begin{equation}\label{strong:convex}
  h(\lambda y + (1-\lambda)x) \leq \lambda h(y) + (1-\lambda) h(x) -
  \lambda (1 - \lambda) \frac{\gamma}{2} \lVert x - y \rVert^{2},
 \end{equation}
 \item \label{def:st:qua} \textit{strongly quasiconvex} on $\dom{h}$ if there exists $\gamma > 0$ such that, for all $x,y \in \dom{h}$ and all $\lambda \in [0,1]$,
 \begin{equation}\label{strong:quasiconvex}
  h(\lambda y + (1-\lambda)x) \leq \max \{h(y), h(x)\} - \lambda(1 -
  \lambda) \frac{\gamma}{2} \lVert x - y \rVert^{2}.
 \end{equation}
\end{enumerate}
Setting $\gamma=0$ in \eqref{strong:convex} (resp. \eqref{strong:quasiconvex}) yields the notions of \textit{convexity} (resp. \textit{quasiconvexity}).
In particular, every (strongly) convex function is (strongly) quasiconvex, whereas the converse implication does not hold in general (see \cite{cambini2008generalized,hadjisavvas2006handbook,Lara2022strongly}).

The following fact collects useful properties of $x \mapsto \|x\|^q$, $q>1$, used throughout the sequel.
\begin{fact}[\textbf{Basic properties of the power norm}]\cite{Ahookhosh2025Asymptotic}\label{fact:norm:unif}
Let $x, y\in \R^n$ be arbitrary.
\begin{enumerate}[label=(\textbf{\alph*}), font=\normalfont\bfseries, leftmargin=0.7cm]
 \item\label{fact:norm:unif:a} If $q>1$, then
  \begin{equation}\label{eq:ineqp}
   \Vert x-y\Vert^q\leq \Vert x\Vert^q-q\Vert x-y\Vert^{q-2}\langle x-y, y\rangle.
  \end{equation}

  \item\label{fact:norm:unif:b} If $q\in (1, 2)$, it is shown in \cite{Kabgani2025Moreau} that, for any
 $r>0$  and $a, b\in \mb(0, r)$,
\[
\langle \Vert a\Vert^{q-2}a - \Vert b\Vert^{q-2}b, a-b\rangle\geq \kappa(q) r^{q-2}\Vert a - b\Vert^2,
\]
where
\begin{equation}\label{eq:def:kappa_p}
\kappa(t):=\left\{
   \begin{array}{ll}
     \frac{(2+\sqrt{3})(t-1)}{16} & t\in (1, \widehat{t}], \\[0.2cm]
      \frac{2+\sqrt{3}}{16}\left(1-\left(3-\sqrt{3}\right)^{1-t}\right) ~~& t\in [\widehat{t},2),
   \end{array}\right.
\end{equation}
and $\widehat{t}$ is the solution to $\frac{t(t-1)}{2} = 1 - \left[1 + \frac{(2-\sqrt{3})t}{t-1}\right]^{1-t}$ on $(1, 2]$, and is determined numerically as $\widehat{t} \approx 1.3214$. For simplicity, we write $\kappa(t)$ as $\kappa_t$. Hence, $\gh(x):=\lVert x\rVert^{q}$ is strongly convex on $\mb(0, r)$ with modulus $\phi(t)=\frac{\kappa_q r^{q-2}}{2}t^2$
\cite[Lemma~4.2.1]{Nesterov2018}.

 \item\label{fact:norm:unif:c} If $q > 2$, and $\hat{\sigma}_{q} = \left( \frac{1}{2} \right)^\frac{3q-2}{2}$, then for every $\lambda \in \, (0, 1)$, we have
 \begin{equation}\label{eq:unifcon:normp}
  \Vert \lambda x+ (1-\lambda) y\Vert^{q} \leq \lambda \Vert x \Vert^{q} + (1- \lambda) \Vert y \Vert^{q} - \lambda (1- \lambda) \hat{\sigma}_{q} \Vert x-y \Vert^{q}.
 \end{equation}  
\end{enumerate}
\end{fact}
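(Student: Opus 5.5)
For part~\ref{fact:norm:unif:a}, I would use only the convexity and differentiability of $\gh(z):=\Vert z\Vert^q$ for $q>1$. Its gradient is $\nabla\gh(z)=q\Vert z\Vert^{q-2}z$, with $\nabla\gh(0)=0$. Write the gradient inequality at the base point $z=x-y$ and evaluate it at $x=z+y$:
\[
\Vert x\Vert^q\ \ge\ \Vert x-y\Vert^q+q\Vert x-y\Vert^{q-2}\langle x-y,y\rangle .
\]
Rearranging gives \eqref{eq:ineqp}. The degenerate case $x=y$ is trivial, with the convention that the term $\Vert x-y\Vert^{q-2}(x-y)$ vanishes.

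For part~\ref{fact:norm:unif:b}, the monotonicity-type inequality is taken from \cite{Kabgani2025Moreau}, so only the final strong convexity claim remains. Since $\nabla\gh(a)=q\Vert a\Vert^{q-2}a$, the cited estimate gives
\[
\langle\nabla\gh(a)-\nabla\gh(b),a-b\rangle\ge q\kappa_q r^{q-2}\Vert a-b\Vert^2\ge \kappa_q r^{q-2}\Vert a-b\Vert^2
\]
on the convex set $\mb(0,r)$. By the equivalence between strongly monotone gradients and strong convexity \cite[Lemma~4.2.1]{Nesterov2018}, obtained by integrating $\langle\nabla\gh(b+t(a-b)),a-b\rangle$ over $t\in[0,1]$, this yields the stated modulus $\phi(t)=\frac{\kappa_q r^{q-2}}{2}t^2$.

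For part~\ref{fact:norm:unif:c}, a naive argument fails. One could expand $\Vert\lambda x+(1-\lambda)y\Vert^2=\lambda\Vert x\Vert^2+(1-\lambda)\Vert y\Vert^2-\lambda(1-\lambda)\Vert x-y\Vert^2$ and raise it to the power $q/2$. However, this only produces the factor $(\lambda(1-\lambda))^{q/2}$, which is too small near $\lambda\in\{0,1\}$.

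Instead, I would proceed through a first-order uniform convexity estimate. First establish
\[
\langle\nabla\gh(a)-\nabla\gh(b),a-b\rangle\ge c_q\Vert a-b\Vert^q ,
\]
for example with $c_q=q\,2^{2-q}$ as in Nesterov's Lemma~4.2.3. Integrating along segments then gives the growth inequality
\[
\gh(u)\ge\gh(z)+\langle\nabla\gh(z),u-z\rangle+\tfrac{c_q}{q}\Vert u-z\Vert^q .
\]

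Next, set $z=\lambda x+(1-\lambda)y$. Apply the growth inequality with $u=x$, where $\Vert x-z\Vert=(1-\lambda)\Vert x-y\Vert$, and with $u=y$, where $\Vert y-z\Vert=\lambda\Vert x-y\Vert$. Multiply these by $\lambda$ and $1-\lambda$ respectively and add; the linear terms cancel. This leaves the deficit
\[
\tfrac{c_q}{q}\lambda(1-\lambda)\big[(1-\lambda)^{q-1}+\lambda^{q-1}\big]\Vert x-y\Vert^q .
\]
By convexity of $t\mapsto t^{q-1}$, the bracket is at least $2^{2-q}$. This produces a constant of the form $(1/2)^{\alpha q+\beta}$.

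The main obstacle is the bookkeeping needed to reach exactly $\hat\sigma_q=(1/2)^{(3q-2)/2}$. I expect this to require choosing the sharpest available $c_q$, or possibly a midpoint-convexity argument combined with a dyadic extension. If the route above only gives a constant at least as large as $\hat\sigma_q$, the inequality with $\hat\sigma_q$ follows a fortiori. I would verify this numerically for small $q$ before finalizing.
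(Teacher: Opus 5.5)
\textbf{Parts (a) and (b) are fine.} Part (a) is exactly the gradient inequality for the convex $C^1$ function $\gh$ at the base point $x-y$. In (b), the cited estimate gives $\langle\nabla\gh(a)-\nabla\gh(b),a-b\rangle\ge q\kappa_q r^{q-2}\|a-b\|^2$ on the convex set $\mb(0,r)$. Integrating along segments yields the stated quadratic modulus, which is even slightly weaker than what you get. The paper itself gives no proof of this Fact; it imports it from the cited works.

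\textbf{Part (c) has a genuine gap, and it is not bookkeeping.} With $c_q=q\,2^{2-q}$, your growth constant is $c_q/q=2^{2-q}$. The bracket bound contributes another factor $2^{2-q}$, so what you actually prove is
\[
\gh(\lambda x+(1-\lambda)y)\le\lambda\gh(x)+(1-\lambda)\gh(y)-2^{4-2q}\lambda(1-\lambda)\|x-y\|^q .
\]
Since $\hat\sigma_q=2^{1-3q/2}$, the a fortiori step needs $4-2q\ge 1-\tfrac{3q}{2}$, i.e.\ $q\le 6$. For $q>6$ your constant is strictly smaller than $\hat\sigma_q$, and the claim is not established.

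Sharpening $c_q$ cannot repair this. Even the optimal constant $g$ in $\gh(u)\ge\gh(z)+\langle\nabla\gh(z),u-z\rangle+g\|u-z\|^q$ satisfies $g\le q\,2^{1-q}$: take $z=-u$ with $\|u\|=\tfrac12$. At $\lambda=\tfrac12$ your bracket equals exactly $2^{2-q}$. So combining the two growth inequalities at $z=\lambda x+(1-\lambda)y$ can never certify more than $q\,2^{3-2q}$. This is of order $q\,4^{-q}$ and falls below $\hat\sigma_q=2\cdot 2^{-3q/2}$ for all large $q$ (already for $q\ge 11$). The loss is structural: the $q$-th power growth is evaluated at the short distances $\lambda\|x-y\|$ and $(1-\lambda)\|x-y\|$.

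\textbf{The fix.} Your fallback idea (midpoint convexity) is the right one, and no dyadic extension is needed. By the parallelogram law, $a:=\|\tfrac{x+y}{2}\|^2$ and $b:=\|\tfrac{x-y}{2}\|^2$ satisfy $a+b=\tfrac12(\|x\|^2+\|y\|^2)$. For $s=q/2\ge1$, superadditivity and convexity of $t\mapsto t^s$ give
\[
\Big\|\tfrac{x+y}{2}\Big\|^q+2^{-q}\|x-y\|^q=a^s+b^s\le(a+b)^s\le\tfrac12\big(\|x\|^q+\|y\|^q\big).
\]
For $\lambda\le\tfrac12$, write $\lambda x+(1-\lambda)y=2\lambda\cdot\tfrac{x+y}{2}+(1-2\lambda)y$. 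Convexity of $\gh$ together with this midpoint bound yields the deficit $2^{1-q}\lambda\|x-y\|^q\ge 2^{1-q}\lambda(1-\lambda)\|x-y\|^q$. The case $\lambda\ge\tfrac12$ is symmetric. Since $2^{1-q}\ge 2^{1-3q/2}=\hat\sigma_q$ for every $q>2$, this proves \eqref{eq:unifcon:normp} with room to spare.
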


Inspired by \cite[Lemma~10 and Remark~2]{Hinder2020Near}, we introduce the following notion of quasar-convexity. 
This generalized convexity notion has appeared in several machine learning applications, where it is used to establish fast convergence of gradient-type methods; see  \cite{guminov2023accelerated,Hardt2018Gradient,HADR,Hinder2020Near,wang2023continuized} and the references therein.

\begin{definition}[\textbf{Strong quasar-convexity}]\label{def:quasar}
 Let $\kappa \in (0,1]$, $\gamma \geq 0$, and let $h: \mathbb{R}^n \to \Rinf$ be a proper function with convex domain such that $\argmin{\mathbb{R}^n} h \neq \emptyset$. The function $h$ is said to be $(\kappa, \gamma)$-\textit{strongly quasar-convex} with respect to $\ov{x} \in \argmin{\mathbb{R}^n} h$ if, for all $x \in \dom h$ and all $\lambda \in [0, 1]$,
\begin{align}\label{eq:def:quasar}
 h(\lambda \ov{x} + (1-\lambda)x) \leq \kappa \lambda h(\ov{x}) + (1-\kappa \lambda) h(x) - \lambda \left( 1 - \frac{\lambda}{2 - \kappa} \right) \frac{\kappa \gamma}{2} \lVert x - \ov{x} \rVert^{2}, 
\end{align}  
If $\gamma = 0$, we simply say that $h$ is $\kappa$-quasar-convex with respect to $\ov{x} \in \argmin{\mathbb{R}^n} h$. 
\end{definition}
Strong convexity controls the function along every segment joining two points in its domain through a quadratic correction term. In contrast, $(\kappa,\gamma)$-strong quasar-convexity imposes such control only along segments joining an arbitrary point to a minimizer $\ov{x}$, while also weakening the right-hand side through the parameter $\kappa$. In this sense, strong quasar-convexity can be viewed as a one-sided relaxation of strong convexity, in which curvature is imposed only along directions pointing toward a minimizer.

In Definition~\ref{def:quasar}, we omit the phrase ``with respect to $\ov{x} \in \argmin{\mathbb{R}^{n}}\,h$" whenever the reference point $\ov{x}$ is clear from the context.  

\begin{remark}[\textbf{Relations between quasar-convexity, star-convexity, and convexity}]\label{rem:on_beg_prop}
 Let $\kappa \in (0,1]$, $\gamma \geq 0$, and let $h: \mathbb{R}^n \to \Rinf$ be a proper function with convex domain such that $\ov{x}\in\argmin{\mathbb{R}^n} h$.
    \begin{enumerate}[label=(\textbf{\alph*}), font=\normalfont\bfseries, leftmargin=0.7cm]
\item\label{rem:on_beg_prop:a} If $h$ is $\kappa$-quasar-convex with respect to $\ov{x}$, then the set $\argmin{\mathbb{R}^{n}}\,h$ is {\it star-convex} with star center $\ov{x}$ (see \cite[Observation 3]{Hinder2020Near} and Example~\ref{ex:star_shaped_min}); that is, for every $y \in \argmin{\mathbb{R}^{n}}\,h$ and every $\lambda \in [0, 1]$, we have $\lambda \ov{x} + (1-\lambda)y \in \argmin{\mathbb{R}^{n}}\,h$.

\item \label{rem:on_beg_prop:b} If $h$ is $(\kappa,\gamma)$-strongly quasar-convex with respect to $\ov{x}$ and $\gamma>0$, then $\argmin{\mathbb{R}^n}h$ is a singleton (see \cite[Observation 4]{Hinder2020Near} and Example~\ref{ex:strongly-quasar-nonsmooth-nonstar}).

\item \label{rem:on_beg_prop:c} 
If $\kappa = 1$ and $h$ is (strongly) quasar-convex with respect to $\ov{x}$, then $h$ is {\it (strongly) star-convex} with respect to $\ov{x}$ \cite{nesterov2006cubic}. In particular, every convex function $h$ is $1$-quasar-convex with respect to any minimizer of $h$, and every $\gamma$-strongly convex function $h$ is $(1,\gamma)$-strongly quasar-convex with respect to its unique minimizer.

 \item \label{rem:on_beg_prop:d} If $h$ is $(\kappa,\gamma)$-strongly quasar-convex and $\gamma>0$, then (see \cite[Corollary 1]{Hinder2020Near})
\begin{equation}\label{qwc:sconvex}
 h(\overline{x}) + \frac{\kappa \gamma}{2(2 - \kappa)} \lVert y - \overline{x} \rVert^{2} \leq h(y), ~ \forall ~ y \in K,
\end{equation}
where $\overline{x} \in \argmin{\mathbb{R}^n} h$, thus strongly quasar-convex functions satisfy a quadratic growth condition with modulus $\frac{\kappa \gamma}{2(2 - \kappa)} > 0$. 
 \end{enumerate}
\end{remark}



The relationships between convexity, star-convexity, and quasar-convexity are summarized in Figure~\ref{fig:convexity-relations}.
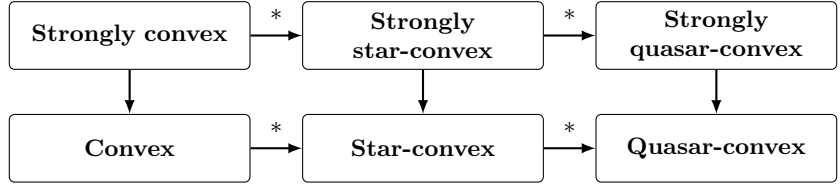
\begin{figure}[ht]
\centering





\begin{tikzpicture}[
    every node/.style={align=center, font=\small},
    box/.style={
        draw,
        rounded corners=2pt,
        minimum width=3.2cm,
        minimum height=0.9cm,
        text width=3.0cm,
        inner sep=2pt
    },
    impl/.style={->, >=latex, thick},
    vert/.style={->, >=latex, thick}
]

\node[box] (sc)  at (0,1.5) {\textbf{Strongly convex}};
\node[box] (ssc) at (3.9,1.5) {\textbf{Strongly star-convex}};
\node[box] (sqc) at (7.8,1.5) {\textbf{Strongly quasar-convex}};

\node[box] (c)   at (0,0) {\textbf{Convex}};
\node[box] (stc) at (3.9,0) {\textbf{Star-convex}};
\node[box] (qc)  at (7.8,0) {\textbf{Quasar-convex}};

\draw[impl] (sc) -- node[above=2pt] {$*$} (ssc);
\draw[impl] (ssc) -- node[above=2pt] {$*$} (sqc);
\draw[impl] (c) -- node[above=2pt] {$*$} (stc);
\draw[impl] (stc) -- node[above=2pt] {$*$} (qc);

\draw[vert] (sc) -- (c);
\draw[vert] (ssc) -- (stc);
\draw[vert] (sqc) -- (qc);

\end{tikzpicture}
\caption{Relationships between convexity, star-convexity, and quasar-convexity. 
Here, ``$*$"  indicates that $\argmin{\R^n} h\neq\emptyset$ is required, and the star-convexity and quasar-convexity properties are understood with respect to a minimizer $\ov{x}\in\argmin{\R^n} h$. None of the reverse implications holds in general (see \cite{guminov2023accelerated,HADR,Hinder2020Near,nesterov2006cubic}).
\label{fig:convexity-relations}
}
\end{figure}

\begin{remark}[\textbf{Non-comparability of (strong) quasiconvexity and (strong) quasar-convexity}]
There is, in general, no inclusion relationship between (strong) quasiconvexity and (strong) quasar-convexity. 
Indeed, the function in \cite[Example~9]{Hadjisavvas2026Heavy} is strongly quasiconvex but not quasar-convex, while the function in \cite[Example~27]{deBrito2025extending} is strongly quasar-convex but not quasiconvex. 
See also Example~\ref{ex:strongly-quasar-nonsmooth-nonstar} for a $(\kappa,\gamma)$-strongly quasar-convex function which is not star-convex, and hence not convex.
For differentiable functions, sufficient conditions ensuring that strong quasiconvexity implies quasar-convexity are provided in \cite{Hadjisavvas2026Heavy,lara2025delayed}.
\end{remark}


The following example exhibits a nonsmooth, nonconvex function that is strongly quasar-convex but not star-convex, showing that strong quasar-convexity is strictly more general than star-convexity.
\begin{example}[\textbf{Strong quasar-convexity beyond star-convexity}]\label{ex:strongly-quasar-nonsmooth-nonstar}
Define $h:\R^2\to\R$ by
\[
h(x):=
\begin{cases}
  \|x\|^{1/2}+\left(2+\sin(3\bs\arg(x))\right)\|x\|^2, & x\neq 0, \\
  0, & x=0.
\end{cases}
\]
where $\bs\arg(x) \in (-\pi,\pi]$ denotes the principal argument of $x\in\mathbb R^2 \setminus\{0\}$. Then $h$ is proper, nonsmooth, nonconvex, and
$(\kappa,\gamma)$-strongly quasar-convex with respect to $\ov x=0$ for $\kappa=\frac{1}{2}$ and $\gamma=1$.
Indeed, for $x\neq 0$, write $x=ru$ with $r=\|x\|>0$ and $\|u\|=1$, and set $q(u):=2+\sin(3\bs\arg(u))\in[1,3]$. Then,
\[
h(x)=r^{1/2}+q(u)r^2.
\]
Let $\lambda\in[0,1]$ and put $s:=1-\lambda$. We have
\[
h(sx)=s^{1/2}r^{1/2}+s^2 q(u)r^2.
\]
Since $s=1-\lambda$ and $q(u)\ge 1$, we have
\[
\left(1-\frac{\lambda}{2}\right)q(u)-s^2q(u)=\lambda\left(\frac32-\lambda\right)q(u)\ge\lambda\left(\frac{3}{2}-\lambda\right)
\ge\frac{\lambda}{4}\left(1-\frac{2\lambda}{3}\right),
\]
which yields
\[
s^2 q(u)\le \left(1-\frac{\lambda}{2}\right)q(u)-\frac{\lambda}{4}\left(1-\frac{2\lambda}{3}\right).
\]
This, together with $s^{1/2}\le 1-\frac{\lambda}{2}$, implies
\begin{align*}
  h((1-\lambda)x) &= s^{1/2}r^{1/2}+s^2 q(u)r^2\\
  &\leq \left(1-\frac{\lambda}{2}\right)r^{1/2}+\left(1-\frac{\lambda}{2}\right)q(u)r^2-\frac{\lambda}{4}\left(1-\frac{2\lambda}{3}\right)r^2\\
 &= \left(1-\frac{\lambda}{2}\right)h(x)
-\frac{\lambda}{4}\left(1-\frac{2\lambda}{3}\right)\|x\|^2.
\end{align*}
Equivalently, since $h(0)=0$, we get
\[
h(\lambda\ov x+(1-\lambda)x)
\le \frac{\lambda}{2}h(\ov x)+\left(1-\frac{\lambda}{2}\right)h(x)
-\lambda\left(1-\frac{\lambda}{2-\kappa}\right)\frac{\kappa\gamma}{2}\|x-\ov x\|^2,
\]
with $\kappa=\frac12$ and $\gamma=1$. Hence $h$ is $\bigl(\frac12,1\bigr)$-strongly quasar-convex with respect to $\ov{x}=0$.

Moreover, $h$ is not star-convex with respect to $\ov{x}=0$. Indeed, if it were, then
$h(tx)\le t\,h(x)$ for all $x\in\R^2$ and $t\in[0,1]$.
Taking $t=\frac12$, we get
\[
h\left(\frac{x}{2}\right)-\frac12 h(x)=\left(\frac{1}{\sqrt2}-\frac12\right)\|x\|^{1/2}-\frac14 q(u)\|x\|^2.
\]
Since $q(u)\le 3$, the right-hand side is positive for all sufficiently small $x\neq 0$, which is a contradiction. Thus $h$ is not star-convex, and hence it is not convex.
\end{example}

For further background on generalized convexity and nonsmooth analysis, we refer to \cite{Bauschke17,cambini2008generalized,Clarke1990,GLM-Survey,hadjisavvas2006handbook,Lara2022strongly,lara2025characterizations,Rockafellar09} and the references therein.

\subsection{Smooth (strongly) quasar-convex functions}
To provide a more comprehensive account of the properties of the class of (strongly) quasar-convex functions, we now summarize several known results in the smooth setting.

\begin{fact}[\textbf{First-order characterization}]\label{fact:char:diff}\cite[Lemma 10]{Hinder2020Near}
A differentiable function $h: \mathbb{R}^{n} \rightarrow \mathbb{R}$ is $(\kappa, \gamma)$-strongly quasar-convex with $\gamma \geq 0$ and $\kappa \in (0, 1]$ with respect to $\overline{x} \in \argmin{\mathbb{R}^{n}}\,h$ if and only if
\begin{equation}\label{diff:squasar}
 h(\overline{x}) \geq h(y) + \frac{1}{\kappa} \langle \nabla h(y), \overline{x} - y \rangle + \frac{\gamma}{2} \lVert y - \overline{x} \rVert^{2}, \qquad \forall ~ y \in \mathbb{R}^{n}.
\end{equation}
\end{fact}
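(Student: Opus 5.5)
We prove the two implications separately. In both directions we fix $y\in\R^n$ and study the one-dimensional function $\varphi(\lambda):=h(\lambda\ov{x}+(1-\lambda)y)$ on $[0,1]$. This $\varphi$ is differentiable, with $\varphi(0)=h(y)$ and $\varphi'(\lambda)=\langle\nabla h(\lambda\ov{x}+(1-\lambda)y),\ov{x}-y\rangle$. For brevity write $c:=\frac{\kappa\gamma}{2}$ and $D:=\|y-\ov{x}\|^2$.

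\textbf{Necessity.} Assume \eqref{eq:def:quasar} holds. Subtract $h(y)$ from both sides and divide by $\lambda>0$ to get
\[
\frac{\varphi(\lambda)-\varphi(0)}{\lambda}\le \kappa\bigl(h(\ov{x})-h(y)\bigr)-\Bigl(1-\frac{\lambda}{2-\kappa}\Bigr)cD .
\]
Let $\lambda\downarrow0$. The left side tends to $\langle\nabla h(y),\ov{x}-y\rangle$, and the right side tends to $\kappa(h(\ov{x})-h(y))-cD$. Divide by $\kappa$ and rearrange; since $c/\kappa=\gamma/2$, this is exactly \eqref{diff:squasar}. This direction is routine.

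\textbf{Sufficiency.} Assume \eqref{diff:squasar}. Put $z_\lambda:=\lambda\ov{x}+(1-\lambda)y$, so that $\ov{x}-z_\lambda=(1-\lambda)(\ov{x}-y)$. Applying \eqref{diff:squasar} at the point $z_\lambda$ gives, for $\lambda\in[0,1)$,
\[
(1-\lambda)\varphi'(\lambda)=\langle\nabla h(z_\lambda),\ov{x}-z_\lambda\rangle\le \kappa\bigl(h(\ov{x})-\varphi(\lambda)\bigr)-c(1-\lambda)^2D .
\]
Now set $g(\lambda):=\varphi(\lambda)-h(\ov{x})\ge0$. Dividing by $1-\lambda$ yields the linear differential inequality
\[
g'(\lambda)\le-\frac{\kappa}{1-\lambda}\,g(\lambda)-c(1-\lambda)D .
\]
The integrating factor is $(1-\lambda)^{-\kappa}$. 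Then $\bigl((1-\lambda)^{-\kappa}g\bigr)'\le -cD(1-\lambda)^{1-\kappa}$. Integrate from $0$ to $\lambda$ and multiply by $(1-\lambda)^{\kappa}$:
\[
g(\lambda)\le(1-\lambda)^{\kappa}g(0)-\frac{cD}{2-\kappa}\Bigl[(1-\lambda)^{\kappa}-(1-\lambda)^{2}\Bigr].
\]
This bound only needs $\varphi$ differentiable, not $C^1$, because the monotonicity argument uses only the derivative of the product.

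\textbf{Comparing with the target.} The inequality \eqref{eq:def:quasar} is equivalent to
\[
g(\lambda)\le(1-\kappa\lambda)g(0)-\lambda\Bigl(1-\frac{\lambda}{2-\kappa}\Bigr)cD .
\]
Two elementary inequalities close the gap.
\begin{itemize}
\item By concavity (Bernoulli's inequality), $(1-\lambda)^{\kappa}\le1-\kappa\lambda$. Since $g(0)\ge0$, the first term is handled.
\item For the quadratic term we need $(1-\lambda)^{\kappa}-(1-\lambda)^2\ge\lambda(2-\kappa-\lambda)$, that is, $(1-\lambda)^{\kappa}\ge1-\kappa\lambda$ after expanding $(1-\lambda)^2$.
\end{itemize}
These two requirements point in opposite directions, so the integrated bound does not finish the proof on its own. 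This is the main obstacle.

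\textbf{Handling the obstacle.} We retain the coupling between the two terms instead of splitting them, and use the growth bound $g(0)\ge\frac{c}{2-\kappa}D$. This follows from \eqref{qwc:sconvex}, or directly by letting $\lambda\to1$ in the integrated estimate. Write $s:=1-\lambda$. The required inequality is then
\[
\bigl[(1-\kappa\lambda)-s^{\kappa}\bigr]g(0)\ \ge\ \frac{cD}{2-\kappa}\Bigl[\lambda(2-\kappa-\lambda)-s^{\kappa}+s^{2}\Bigr]
=\frac{cD}{2-\kappa}\Bigl[(1-\kappa\lambda)-s^{\kappa}\Bigr].
\]
The identity on the right follows by expanding $s^2=1-2\lambda+\lambda^2$. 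So both sides carry the same nonnegative factor $(1-\kappa\lambda)-s^{\kappa}$, and the inequality reduces exactly to the growth bound $g(0)\ge\frac{cD}{2-\kappa}$. That bound holds, which completes the proof of sufficiency.
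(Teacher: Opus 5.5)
Your proof is correct and follows the same route as the paper's proof of the nonsmooth generalization, Theorem~\ref{thm:char:clarke} (Appendix~\ref{sec:app:a0}). The shared steps are the differential inequality for $g$, the integrating factor $(1-\lambda)^{-\kappa}$, the growth bound $g(0)\ge\frac{\kappa\gamma}{2(2-\kappa)}\|y-\ov x\|^2$, and Bernoulli's inequality combined with $(1-\kappa\lambda)-(1-\lambda)^2=(2-\kappa)\lambda\bigl(1-\frac{\lambda}{2-\kappa}\bigr)$. Your mean-value-theorem monotonicity argument properly replaces the paper's absolute-continuity step, which relies on local Lipschitzness and is not available for a merely differentiable $h$.

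One caution concerns the growth bound. Do not cite \eqref{qwc:sconvex} for it: that inequality presupposes strong quasar-convexity, so using it in the sufficiency direction is circular. Derive it as the paper does: divide your integrated estimate by $(1-\lambda)^{\kappa}$, use $g(\lambda)\ge 0$, and then let $\lambda\uparrow 1$. Letting $\lambda\to 1$ in the multiplied form only yields $0\le 0$.
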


By Fact~\ref{fact:char:diff}, quasi-strongly convex functions \cite{necoara2019linear} are closely connected to strongly quasar-convex functions. Additional properties for differentiable (strongly) quasar-convex functions can be found in \cite{guminov2023accelerated,Hardt2018Gradient,HADR,Hinder2020Near,wang2023continuized}.

The next result provides a sufficient condition for strongly quasar-convex functions.

\begin{fact}[{\bf Sufficient condition}] \cite[Proposition 8]{HADR}
 Let \(f:\mathbb{R}\to \mathbb{R}\) be a \((\kappa, \gamma)\)-strongly quasar-convex function with \(f(0) = 0 = h^{*}\) and \(g:S^{d-1} \to \mathbb{R}\) be a differentiable function with \(g(x)\geq 1\) for all \(x\in S^{d-1}\). Define
\[
    h(x) = f(\| x\|)g\left(\frac{x}{\| x\|}\right),\quad h(0)=0.
\]
Then, \(h\) is \((\kappa ,\gamma)\)-strongly quasar convex.
\end{fact}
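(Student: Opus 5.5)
The plan is to verify Definition~\ref{def:quasar} for $h$ directly, with reference point $\ov{x}=0$. The key observation is that the segment from any $x$ to $0$ stays on the ray through $x$. Along that ray $h$ is just $f$ composed with the norm, scaled by a constant factor $g(u)\ge 1$. So the one-dimensional quasar-convexity of $f$ should transfer immediately.

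\emph{Preliminary checks.} The domain of $h$ is $\R^n$, which is convex. Since $f(0)=0$ is the minimal value of $f$, we have $f\ge 0$ on $\R$. Combined with $g\ge 1$, this gives $h(x)=f(\|x\|)g(x/\|x\|)\ge 0=h(0)$ for all $x$. Hence $0\in\argmin{\R^n}h$, $h(0)=0$, and $h$ is proper. I read the hypothesis as saying that $f$ is $(\kappa,\gamma)$-strongly quasar-convex with respect to its minimizer $0$; this is the only minimizer used.

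\emph{Main step.} For $x=0$ the inequality \eqref{eq:def:quasar} is trivial, since both sides reduce to $0$ and the quadratic term vanishes. Fix $x\ne 0$ and write $x=ru$ with $r=\|x\|>0$ and $\|u\|=1$. Fix $\lambda\in[0,1]$.
\begin{itemize}
\item For $\lambda<1$, the point $(1-\lambda)x$ has norm $(1-\lambda)r$ and direction $u$, so $h((1-\lambda)x)=f((1-\lambda)r)\,g(u)$.
\item For $\lambda=1$, the point is $0$ and $h(0)=0=f(0)g(u)$.
\end{itemize}
So in all cases $h(\lambda\cdot 0+(1-\lambda)x)=f(\lambda\cdot 0+(1-\lambda)r)\,g(u)$. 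Applying \eqref{eq:def:quasar} to $f$ at the point $r$ with minimizer $0$ and $f(0)=0$ gives
\[
f((1-\lambda)r)\le (1-\kappa\lambda)f(r)-\lambda\Bigl(1-\frac{\lambda}{2-\kappa}\Bigr)\frac{\kappa\gamma}{2}r^2 .
\]
Multiply this by $g(u)>0$. The first term becomes $(1-\kappa\lambda)h(x)$.

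The only point needing care is the quadratic term. Since $\kappa\in(0,1]$, we have $2-\kappa\ge 1\ge\lambda$, so the coefficient $\lambda\bigl(1-\frac{\lambda}{2-\kappa}\bigr)\frac{\kappa\gamma}{2}$ is nonnegative. Because $g(u)\ge 1$, it follows that
\[
-g(u)\,\lambda\Bigl(1-\frac{\lambda}{2-\kappa}\Bigr)\frac{\kappa\gamma}{2}r^2\le -\lambda\Bigl(1-\frac{\lambda}{2-\kappa}\Bigr)\frac{\kappa\gamma}{2}\|x\|^2 .
\]
This is precisely where the assumption $g\ge1$ is used.

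Combining the two bounds and adding $\kappa\lambda h(0)=0$ yields \eqref{eq:def:quasar} for $h$ with the same $(\kappa,\gamma)$. I expect no genuine obstacle. The care needed is only in the sign bookkeeping just described and in the degenerate cases $\lambda=1$ and $x=0$. Differentiability of $g$ is not needed for the argument.
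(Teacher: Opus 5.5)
Your proof is correct. Writing $x=ru$ keeps the segment from $x$ to $0$ on the ray through $u$, so $h((1-\lambda)x)=f((1-\lambda)r)\,g(u)$. Multiplying the one-dimensional inequality \eqref{eq:def:quasar} for $f$ by $g(u)\ge 1$ then gives the claim; here the inequality is taken with respect to $0$, which is forced when $\gamma>0$ because the minimizer of $f$ is then unique. The condition $g(u)\ge1$ is used only to strengthen the nonnegative quadratic term.

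The paper gives no proof of its own. It imports the statement from \cite{HADR} as a result in the smooth setting, where the natural argument goes through the gradient characterization of Fact~\ref{fact:char:diff}. Since $x\mapsto g(x/\|x\|)$ is positively homogeneous of degree $0$, Euler's identity makes its gradient orthogonal to $x$. Hence $\langle\nabla h(x),x\rangle=f'(\|x\|)\,\|x\|\,g(x/\|x\|)$, and multiplying $\frac{1}{\kappa}f'(r)r\ge f(r)+\frac{\gamma}{2}r^2$ by $g\ge1$ yields \eqref{diff:squasar} for $h$.

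Your zeroth-order verification is more elementary and more general. It uses no differentiability of $f$ or $g$, and no separate check that $h$ is differentiable at the origin. It needs only the behaviour of $f$ on $[0,\infty)$ relative to $0$. So it proves the nonsmooth version of the fact, which is the version relevant to Section~\ref{sec:03}. It is the same kind of radial computation the paper does by hand in Example~\ref{ex:strongly-quasar-nonsmooth-nonstar}. The gradient route, by contrast, delivers the first-order inequality directly, which is the form used in smooth algorithmic analyses.
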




\begin{fact}[{\bf Polyak--{\L}ojasiewicz (PL) inequality}]\cite[Lemma 2]{HADR}
 A PL function with modulus $\mu > 0$ satisfies an error bound with modulus $\mu > 0$. A function which is \(L\)-smooth and satisfies error bound with modulus $\theta > 0$ satisfies PL with modulus \(\mu := \frac{\theta}{L}\).
\end{fact}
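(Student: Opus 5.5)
The plan is to prove the two implications separately, working from the definitions: PL means $\frac{1}{2}\|\nabla h(x)\|^2 \geq \mu\,(h(x)-h^*)$, and the error bound means $\|\nabla h(x)\| \geq \theta\,\dist(x,\argmin{\R^n} h)$.

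\textbf{PL implies the error bound.} I would follow the classical Karimi--Nutini--Schmidt argument, in two steps.

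First, I would show that PL gives quadratic growth: $h(x)-h^*\geq \frac{\mu}{2}\dist(x,\argmin{\R^n} h)^2$. To do this, set $g(x):=\sqrt{h(x)-h^*}$. Away from the minimizers, PL gives
\[
\|\nabla g(x)\|=\frac{\|\nabla h(x)\|}{2\sqrt{h(x)-h^*}}\geq \sqrt{\mu/2}.
\]
Now follow the gradient flow $\dot x=-\nabla h(x)$ started at $x$. Along the flow, $g$ decreases at rate at least $\sqrt{\mu/2}$ times the arc length traveled. Since $h$ is bounded below, the path has finite length, and the flow converges to a point $x_\infty$ with $h(x_\infty)=h^*$. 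Hence
\[
\sqrt{\mu/2}\;\dist(x,\argmin{\R^n} h)\leq \sqrt{\mu/2}\;\|x-x_\infty\| \leq g(x),
\]
which is quadratic growth.

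Second, I would combine quadratic growth with PL:
\[
\frac{\mu}{2}\,\dist^2 \leq h(x)-h^*\leq \frac{1}{2\mu}\|\nabla h(x)\|^2 .
\]
This yields an error bound of the form $\|\nabla h(x)\|\geq \mu\,\dist(x,\argmin{\R^n} h)$, so the modulus is $\mu$, as the statement asserts.

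\textbf{Error bound plus $L$-smoothness implies PL.} The natural route is to bound $h(x)-h^*$ from above by $\|\nabla h(x)\|^2$. Let $x_p$ be the projection of $x$ onto the minimizer set. Since $\nabla h(x_p)=0$, the descent lemma gives $h(x)-h^*\leq \frac{L}{2}\|x-x_p\|^2$. The error bound then gives
\[
h(x)-h^*\leq \frac{L}{2\theta^2}\|\nabla h(x)\|^2 ,
\]
which is PL with modulus $\theta^2/L$.

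To reach the stated modulus $\theta/L$ instead, the error bound must be read with $\theta$ scaling the squared distance. I would check which normalization the cited source \cite{HADR} uses and adapt the constants accordingly; the argument itself does not change.

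\textbf{Main obstacle.} The delicate step is the gradient-flow length argument in the first implication. It needs existence of the flow and convergence of the trajectory, which I would obtain from the finite-length estimate. An alternative that avoids the flow is an Ekeland variational principle argument applied to $g$. Matching the moduli exactly to the stated ones is a secondary bookkeeping issue.
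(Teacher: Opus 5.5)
The paper gives no proof of this Fact; it is quoted from the cited lemma. Your two arguments are the standard Karimi--Nutini--Schmidt ones behind it. For the first half: PL $\Rightarrow$ quadratic growth via the length of the gradient-flow trajectory of $\sqrt{h-h^*}$, and then PL plus growth $\Rightarrow$ error bound. For the second half: the descent lemma at a nearest minimizer plus the error bound $\Rightarrow$ PL. Both arguments are sound. What they prove is PL$(\mu)\Rightarrow$ EB$(\mu)$, and EB$(\theta)$ plus $L$-smoothness $\Rightarrow$ PL$(\theta^2/L)$.

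The problem is your closing hedge. Reading the error bound as $\|\nabla h(x)\|^2\ge\theta\,\dist(x,\argmin{\R^n} h)^2$ does produce $\theta/L$ in the second half. Under that reading, however, your first half only gives an error bound with modulus $\mu^2$, so the first sentence fails instead. The constants are therefore not a bookkeeping choice.

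In fact no normalization makes both printed moduli correct, so you cannot prove the statement as written. Take $h(x)=\tfrac14x^2$ on $\R$. Then $L=\tfrac12$ and the best PL modulus is $\tfrac12$.
\begin{itemize}
\item With the unsquared bound $|h'(x)|\ge\theta|x|$ one may take $\theta=\tfrac12$. PL with modulus $\theta/L=1$ would then need $\tfrac18x^2\ge\tfrac14x^2$, which is false.
\item With the squared bound, the best $\theta$ is $\tfrac14$. The first sentence would then need $\tfrac14x^2\ge\tfrac12x^2$, which is also false.
\end{itemize}
More conceptually, replacing $h$ by $ch$ multiplies $\mu$, $L$ and the unsquared $\theta$ by $c$. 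So $\theta/L$ is scale-invariant and cannot be a PL modulus. Under the squared reading, $\theta$ scales like $c^2$, which is incompatible with EB$(\mu)$.

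The rest of the paper fixes the unsquared normalization. EB$(\theta)$ plus UAAC$(a)$ is said to give RSI with modulus $\theta a$. The constant $\frac{\mu a}{\kappa}-\frac L2$ in the UAAC result is what one gets by chaining PL$(\mu)\Rightarrow$ EB$(\mu)\Rightarrow$ RSI$(\mu a)$ and then applying the RSI-to-quasar-convexity fact. You should therefore state outright that the second modulus must be $\theta^2/L$, which is exactly what your argument gives.

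There are two smaller points in the first half.
\begin{itemize}
\item The claim $h(x_\infty)=h^*$ needs a line of justification. PL along the flow gives $\frac{d}{dt}(h(x(t))-h^*)=-\|\nabla h(x(t))\|^2\le-2\mu(h(x(t))-h^*)$, so $h(x(t))\to h^*$.
\item The gradient flow needs at least continuity of $\nabla h$, which the first sentence does not assume. The Ekeland-principle route you mention is the one that covers the statement as posed.
\end{itemize}
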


Let us recall the restricted secant inequality property.

\begin{definition}[{\bf Restricted secant inequality}]
 It is said that a function \(h: \mathbb{R}^d\mapsto \mathbb{R}\) satisfies a restricted secant inequality with modulus $\nu > 0$ if
 $$\langle \nabla h(x),x - \overline{x}\rangle \geq \nu \| x - \overline{x}\| ^2,\quad \forall ~ x \in \mathbb{R}^d.$$
\end{definition}

\begin{fact}[{\bf Restricted secant inequality}] \cite[Lemma 3]{HADR}
Suppose \(h\) satisfies the uniform acute angle condition (UAAC)
\begin{equation}\label{uaac}
 \forall ~ x \in \mathbb{R}^d,\quad 1 \geq \frac{\langle \nabla h(x),x - \overline{x}\rangle}{\| \nabla h(x)\| \| x - \overline{x}\|} \geq a > 0.
\end{equation} 
Then, if \(h\) satisfies the error bound with modulus $\theta > 0$, then \(h\) satisfies the restricted secant inequality with modulus $\theta a > 0$.
\end{fact}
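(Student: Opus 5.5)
The claim has two parts, and I will prove them separately. Throughout, $\overline{x}$ denotes the reference minimizer. The UAAC gives, for every $x$ with $\nabla h(x)\neq 0$ and $x\neq\overline{x}$,
\[
\langle \nabla h(x),x-\overline{x}\rangle\ \ge\ a\,\|\nabla h(x)\|\,\|x-\overline{x}\|.
\]
The target is the restricted secant inequality
\[
\langle \nabla h(x),x-\overline{x}\rangle\ \ge\ \theta a\,\|x-\overline{x}\|^2 .
\]

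\textbf{The restricted secant inequality.} I read the error bound with modulus $\theta$ in the standard sense of Karimi--Nutini--Schmidt:
\[
\|\nabla h(x)\|\ \ge\ \theta\,\dist(x,\argmin{\R^d} h).
\]
I also use that the acute-angle condition is stated with respect to a minimizer $\overline{x}$ at which the distance is attained. In the setting of interest the minimizer is unique; for instance, this holds under strong quasar-convexity by Remark~\ref{rem:on_beg_prop}\ref{rem:on_beg_prop:b}. So I take $\dist(x,\argmin{\R^d} h)=\|x-\overline{x}\|$, or more generally $\overline{x}$ is the projection of $x$ onto the solution set. The proof is then a two-step chain.

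\begin{enumerate}[label=(\roman*), leftmargin=0.7cm]
\item If $x=\overline{x}$, both sides of the restricted secant inequality vanish. If $\nabla h(x)=0$, then $x$ is a minimizer: the UAAC quotient is undefined there, so such $x$ must be minimizers, or else the error bound forces $\dist(x,\argmin{\R^d} h)=0$. Either way the distance is zero and the inequality is trivial.
\item Otherwise, multiply the UAAC lower bound by $\|\nabla h(x)\|\,\|x-\overline{x}\|>0$ to get $\langle\nabla h(x),x-\overline{x}\rangle\ge a\|\nabla h(x)\|\,\|x-\overline{x}\|$. Then insert the error bound $\|\nabla h(x)\|\ge\theta\|x-\overline{x}\|$ to conclude
\[
\langle\nabla h(x),x-\overline{x}\rangle\ \ge\ \theta a\,\|x-\overline{x}\|^2 .
\]
\end{enumerate}

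\textbf{Checking the preceding fact.} The earlier PL/error-bound fact is quoted from \cite{HADR}, so I only sketch a check of it. From PL with modulus $\mu$, namely
\[
\tfrac12\|\nabla h\|^2\ \ge\ \mu\,(h-h^*),
\]
the error bound follows by the standard gradient-flow argument. Conversely, $L$-smoothness gives
\[
h(x)-h^*\ \le\ \langle\nabla h(x),x-\overline{x}\rangle\ \le\ \|\nabla h(x)\|\,\|x-\overline{x}\|\ \le\ \|\nabla h(x)\|^2/\theta ,
\]
where the first step may instead use descent-lemma bounds. Combining these yields PL with the stated modulus up to constants.

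\textbf{Main obstacle.} The only delicate point is matching the distance in the error bound with $\|x-\overline{x}\|$ for the specific $\overline{x}$ used in the UAAC. If the solution set is not a singleton, I would use the nearest-point projection $\overline{x}=\overline{x}(x)$ in both conditions. This requires reading the UAAC pointwise with respect to that projection, and I would state this interpretation explicitly.
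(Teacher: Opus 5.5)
Your argument is correct and is the standard chain behind the cited lemma; the paper itself imports this Fact without proof. The UAAC lower bound gives $\langle\nabla h(x),x-\overline{x}\rangle\ge a\|\nabla h(x)\|\,\|x-\overline{x}\|$, the error bound then gives $\ge\theta a\|x-\overline{x}\|^2$, and identifying the error-bound distance with $\|x-\overline{x}\|$ (unique minimizer or projection reading) correctly handles the points where the UAAC quotient is undefined.

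The side paragraph re-checking the preceding PL fact is unnecessary, and its first step is wrong as written. The inequality $h(x)-h^*\le\langle\nabla h(x),x-\overline{x}\rangle$ is a star-convexity inequality and does not follow from $L$-smoothness. The descent lemma gives instead $h(x)-h^*\le\frac{L}{2}\|x-\overline{x}\|^2\le\frac{L}{2\theta^2}\|\nabla h(x)\|^2$.
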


\begin{fact}[{\bf Restricted secant inequality}] \cite[Lemma 4]{HADR}
 Let \(h\) be a \(L\)-smooth function. Then the following assertions hold:
 \begin{enumerate}
  \item[$(a)$] If $h$ is \((\kappa ,\gamma)\)-strongly quasar-convex, then $h$ satisfies the restricted secant inequality with modulus $\frac{\kappa\gamma}{2 - \kappa} > 0$.

  \item[$(b)$] If $h$ satisfies the restricted secant inequality with modulus $\nu > 0$ and $0 < \kappa < \frac{2\nu}{L}$, then $h$ is $\left(\kappa, \frac{\nu}{\kappa} -\frac{L}{2}\right)$-strongly quasar-convex.
 \end{enumerate}
\end{fact}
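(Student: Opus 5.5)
For part (a), I would work entirely through the first-order characterization in Fact~\ref{fact:char:diff} together with the quadratic growth bound \eqref{qwc:sconvex} of Remark~\ref{rem:on_beg_prop}\ref{rem:on_beg_prop:d}. Smoothness is only needed so that $\nabla h$ exists. Fix $y\in\R^n$ and set $r:=\|y-\ov{x}\|$. Rearranging \eqref{diff:squasar} and multiplying by $\kappa>0$ gives
\[
\langle \nabla h(y), y-\ov{x}\rangle \ \ge\ \kappa\bigl(h(y)-h(\ov{x})\bigr)+\frac{\kappa\gamma}{2}r^2 .
\]
Next I would bound $h(y)-h(\ov{x})$ from below by $\frac{\kappa\gamma}{2(2-\kappa)}r^2$ using \eqref{qwc:sconvex}. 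Adding the two contributions yields
\[
\frac{\kappa\gamma}{2}\Bigl(\frac{\kappa}{2-\kappa}+1\Bigr)r^2=\frac{\kappa\gamma}{2-\kappa}\,r^2,
\]
which is exactly the claimed restricted secant modulus. If $\gamma=0$ the statement is vacuous, since the modulus is not positive, so I would tacitly take $\gamma>0$ as in Remark~\ref{rem:on_beg_prop}\ref{rem:on_beg_prop:d}.

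For part (b), the goal is to verify \eqref{diff:squasar} with $\gamma':=\frac{\nu}{\kappa}-\frac{L}{2}$ and then conclude by the ``if'' direction of Fact~\ref{fact:char:diff}. The hypothesis $\kappa<2\nu/L$ is precisely what makes $\gamma'>0$. I would proceed in two steps.

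First, since $\ov{x}$ is a minimizer of the differentiable $h$, we have $\nabla h(\ov{x})=0$. Applying the descent lemma for $L$-smooth functions at the base point $\ov{x}$ then gives
\[
h(y)\le h(\ov{x})+\frac{L}{2}r^2, \qquad\text{i.e.}\qquad h(\ov{x})\ge h(y)-\frac{L}{2}r^2 .
\]
Second, the restricted secant inequality gives $\frac{1}{\kappa}\langle\nabla h(y),\ov{x}-y\rangle\le -\frac{\nu}{\kappa}r^2$. Hence the right-hand side of \eqref{diff:squasar} with $\gamma'$ is at most
\[
h(y)+\Bigl(-\frac{\nu}{\kappa}+\frac{\gamma'}{2}\Bigr)r^2 = h(y)-\Bigl(\frac{\nu}{2\kappa}+\frac{L}{4}\Bigr)r^2 .
\]
It therefore suffices that $\frac{\nu}{2\kappa}+\frac{L}{4}\ge\frac{L}{2}$, i.e.\ $\frac{\nu}{\kappa}\ge\frac{L}{2}$, which again follows from $\kappa<2\nu/L$. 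Chaining this with the descent-lemma bound gives \eqref{diff:squasar} for $\gamma'$, and Fact~\ref{fact:char:diff} finishes the proof.

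The only delicate point is choosing the correct direction of the smoothness inequality in (b). The descent lemma must be used at the base point $\ov{x}$, where the gradient vanishes, so that it produces a lower bound on $h(\ov{x})$. Expanding at $y$ instead would give an upper bound, which points the wrong way. The rest is bookkeeping of constants.
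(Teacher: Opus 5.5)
The paper does not prove this statement; it imports it as a Fact from \cite[Lemma~4]{HADR}, so there is no in-paper argument to compare against. Your proof is correct and self-contained given Fact~\ref{fact:char:diff} and \eqref{qwc:sconvex}. In (a), the bound $\kappa(h(y)-h(\ov{x}))\ge \frac{\kappa^2\gamma}{2(2-\kappa)}r^2$ plus the term $\frac{\kappa\gamma}{2}r^2$ gives exactly $\frac{\kappa\gamma}{2-\kappa}r^2$. In (b), combining the descent lemma at $\ov{x}$ with the restricted secant inequality at $y$ is the natural route.

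Three small remarks.

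First, your chain in (b) proves more than is claimed. It verifies \eqref{diff:squasar} for every $\gamma'$ with $\frac{\gamma'}{2}\le \frac{\nu}{\kappa}-\frac{L}{2}$, that is, up to $\gamma'=\frac{2\nu}{\kappa}-L$. So the stated modulus $\frac{\nu}{\kappa}-\frac{L}{2}$ is half of what your argument yields. The slack is visible in your final comparison $\frac{\nu}{2\kappa}+\frac{L}{4}\ge \frac{L}{2}$.

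Second, Definition~\ref{def:quasar} and Fact~\ref{fact:char:diff} require $\kappa\in(0,1]$. The hypotheses only give $\nu\le L$, since $\nu r^2\le \langle \nabla h(y)-\nabla h(\ov{x}), y-\ov{x}\rangle\le L r^2$. Hence $2\nu/L$ can exceed $1$; for instance $h(x)=\frac{L}{2}\|x\|^2$ has $\nu=L$. You should therefore take $\kappa\in(0,\min\{1,2\nu/L\})$ explicitly.

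Third, $\nabla h(\ov{x})=0$ is legitimate because $\ov{x}$ is a minimizer. In fact the restricted secant inequality alone forces this: $\frac{d}{dt}h(\ov{x}+td)\ge \nu t\|d\|^2>0$ for $t>0$, so $\ov{x}$ is the unique minimizer.
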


The subsequent result provides a relationship between strongly quasar-convex functions and functions that satisfy the PL property.

\begin{fact}[{\bf PL inequality}] \cite[Theorem 7]{HADR}
 Let \(h\) be a \(L\)-smooth and \(\mu\)-PL function with a unique minimizer. Then there exists \(\kappa, \gamma^{\prime} > 0\) such that \(h\) is \((\kappa, \gamma^{\prime})\)-strongly quasar-convex if and only if for some \(a \in (0,1]\), \(h\) satisfies relation \eqref{uaac}.
 In particular, if (UAAC) holds, then as long as $\kappa < \frac{2\mu a}{L}$, $h$ is $(\kappa, \frac{\mu a}{\kappa} -\frac{L}{2})$-strongly quasar-convex.
\end{fact}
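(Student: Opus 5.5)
The plan is to reduce both directions of the equivalence to the facts already collected from \cite{HADR}: Lemma~2 (PL $\Rightarrow$ error bound), Lemma~3 (UAAC plus error bound $\Rightarrow$ restricted secant inequality) and Lemma~4 (restricted secant inequality $\Leftrightarrow$ strong quasar-convexity for $L$-smooth functions). Throughout, $\ov{x}$ denotes the unique minimizer, so $\nabla h(\ov{x})=0$. Since $\argmin{\R^n} h=\{\ov{x}\}$, the distance to the solution set is simply $\|x-\ov{x}\|$. The error bound with modulus $\theta$ will be read as $\|\nabla h(x)\|\ge \theta\|x-\ov{x}\|$. The ratio in \eqref{uaac} will be understood for $x\neq\ov{x}$, where the gradient turns out to be nonzero in both directions.

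\emph{Necessity.} Assume $h$ is $(\kappa,\gamma')$-strongly quasar-convex with $\kappa,\gamma'>0$.
\begin{enumerate}[label=(\textbf{\roman*}), font=\normalfont\bfseries, leftmargin=0.7cm]
\item By Lemma~4(a) of \cite{HADR}, $h$ satisfies the restricted secant inequality with modulus $\nu:=\frac{\kappa\gamma'}{2-\kappa}>0$, that is, $\langle\nabla h(x),x-\ov{x}\rangle\ge\nu\|x-\ov{x}\|^2$. In particular $\nabla h(x)\neq0$ for every $x\neq\ov{x}$.
\item By $L$-smoothness and $\nabla h(\ov{x})=0$, we have $\|\nabla h(x)\|\le L\|x-\ov{x}\|$.
\item Dividing the bound in (i) by $\|\nabla h(x)\|\,\|x-\ov{x}\|$ and using (ii), the ratio in \eqref{uaac} is at least $\nu/L$.
\item The upper bound $1$ is Cauchy--Schwarz. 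Comparing (i) with (ii) through Cauchy--Schwarz also gives $\nu\le L$, so $a:=\nu/L\in(0,1]$ and UAAC holds.
\end{enumerate}
PL is not needed in this direction.

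\emph{Sufficiency and the quantitative part.} Assume UAAC holds with some $a\in(0,1]$.
\begin{enumerate}[label=(\textbf{\roman*}), font=\normalfont\bfseries, leftmargin=0.7cm]
\item By Lemma~2 of \cite{HADR}, the $\mu$-PL inequality gives an error bound with modulus $\mu$.
\item Lemma~3 of \cite{HADR}, applied with UAAC, then yields the restricted secant inequality with modulus $\nu=\mu a$.
\item Lemma~4(b) of \cite{HADR}, applied with $\nu=\mu a$, shows that for every $0<\kappa<\frac{2\mu a}{L}$ the function $h$ is $\bigl(\kappa,\frac{\mu a}{\kappa}-\frac{L}{2}\bigr)$-strongly quasar-convex. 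This is exactly the ``in particular'' assertion.
\item The condition $\kappa<\frac{2\mu a}{L}$ is equivalent to $\gamma':=\frac{\mu a}{\kappa}-\frac L2>0$.
\item Definition~\ref{def:quasar} also requires $\kappa\in(0,1]$. So for the existence claim I take any $\kappa\in\bigl(0,\min\{1,\frac{2\mu a}{L}\}\bigr)$.
\end{enumerate}

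The main obstacle I expect is bookkeeping rather than substance: the chain only works if the normalizations agree across the cited lemmas. In particular, the error-bound modulus produced by Lemma~2 must be in the form $\|\nabla h(x)\|\ge\mu\|x-\ov{x}\|$ that Lemma~3 consumes. Uniqueness of the minimizer is used precisely here, to identify $\dist(x,\argmin{\R^n} h)$ with $\|x-\ov{x}\|$. If the PL convention were $\frac12\|\nabla h\|^2\ge\mu(h-h^*)$ with a different constant, I would first rederive the error bound directly. To do so, I would combine PL with the quadratic growth $h(x)-h^*\ge\frac{\mu}{2}\|x-\ov{x}\|^2$, which follows from PL by the standard gradient-flow argument, and then propagate the adjusted constant through Lemmas~3 and~4(b).
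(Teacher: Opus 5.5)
Your argument is correct. Necessity goes through Lemma~4(a) together with $\|\nabla h(x)\|\le L\|x-\ov{x}\|$, which gives $a=\nu/L\le 1$. Sufficiency uses the chain PL $\Rightarrow$ error bound $\Rightarrow$ restricted secant inequality $\Rightarrow$ strong quasar-convexity, with $\kappa$ capped at $1$. This is the route set up by the three facts from \cite{HADR} listed just before this one; the paper itself gives no proof beyond citing \cite[Theorem 7]{HADR}.

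Your normalization worry does not arise. With $\frac12\|\nabla h\|^2\ge\mu(h-h^*)$, PL combined with the induced growth $h-h^*\ge\frac{\mu}{2}\|x-\ov{x}\|^2$ gives $\|\nabla h(x)\|\ge\mu\|x-\ov{x}\|$ exactly, which is the form Lemma~3 uses.
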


\begin{fact}[{\bf PL inequality}] \cite[Proposition 10]{HADR}
 Let $h$ be \((\kappa ,\gamma)\)-strongly quasar-convex function. Then $h$ satisfies the PL property with modulus $\gamma \kappa^{2} > 0$.
\end{fact}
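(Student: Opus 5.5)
We prove the PL statement of \cite[Proposition 10]{HADR} in the smooth setting, i.e., for differentiable $h$:
\[
\tfrac12\|\nabla h(y)\|^2 \ge \gamma\kappa^2\,(h(y)-h^*)\qquad \text{for all } y\in\R^n,
\]
with $h^*=h(\ov{x})$. (Some references write PL as $\|\nabla h\|^2\ge 2\mu(h-h^*)$; the form above is that with $\mu=\gamma\kappa^2$.) The argument combines the first-order characterization in Fact~\ref{fact:char:diff} with a Young-type inequality.

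\textbf{Step 1.} Fix $y$ and rearrange \eqref{diff:squasar}:
\[
h(y)-h^* \le \frac{1}{\kappa}\langle \nabla h(y), y-\ov{x}\rangle - \frac{\gamma}{2}\|y-\ov{x}\|^2 .
\]

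\textbf{Step 2.} Bound the inner product by Cauchy--Schwarz, setting $t:=\|y-\ov{x}\|$:
\[
h(y)-h^* \le \frac{1}{\kappa}\|\nabla h(y)\|\,t-\frac{\gamma}{2}t^2 .
\]

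\textbf{Step 3.} Maximize the right-hand side over $t\ge 0$. It is a concave quadratic in $t$, maximized at $t=\|\nabla h(y)\|/(\kappa\gamma)$, with maximum value $\|\nabla h(y)\|^2/(2\kappa^2\gamma)$. Equivalently, apply Young's inequality $ab\le \frac{a^2}{2\gamma}+\frac{\gamma b^2}{2}$ with $a=\|\nabla h(y)\|/\kappa$ and $b=t$. Either way,
\[
h(y)-h^*\le \frac{1}{2\gamma\kappa^2}\|\nabla h(y)\|^2,
\]
which is exactly the claimed inequality.

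\textbf{Potential obstacles.} The main one is bookkeeping: matching the normalization of the PL modulus used in \cite{HADR} with the constant $\gamma\kappa^2$. The computation itself is routine.

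Also, $\gamma>0$ is needed for the bound to be nontrivial. When $\gamma=0$, the PL modulus $\gamma\kappa^2$ is $0$, so the statement holds trivially from Step 1, since $h(y)-h^*\ge 0$.

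If one wanted the nonsmooth analogue, the same scheme would apply with $\nabla h(y)$ replaced by any Clarke subgradient. This would require a subgradient version of \eqref{diff:squasar}, obtained from \eqref{eq:def:quasar} by dividing by $\lambda$, letting $\lambda\downarrow 0$, and using \eqref{prop:direc}. That limit is where the real work would lie. For the stated differentiable result, Fact~\ref{fact:char:diff} suffices.
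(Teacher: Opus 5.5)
Your argument is correct and follows the same route as the paper. The Fact itself is only cited from \cite{HADR}, but the paper proves its nonsmooth version, Proposition~\ref{prop:nonsmooth-PL}, in the same two steps: rearrange the first-order characterization (Theorem~\ref{thm:char:clarke}, the Clarke analogue of Fact~\ref{fact:char:diff}), apply Young's inequality to get $h(x)-h(\ov{x})\le \frac{1}{2\gamma\kappa^2}\|v\|^2$ for every $v\in\partial^C h(x)$, and then take the infimum over $v$; the subgradient characterization you identify as ``where the real work would lie'' is supplied in Appendix~\ref{sec:app:a0} under a local Lipschitz assumption.
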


In the case of a twice continuously differentiable function, we have the following results.

\begin{fact}[{\bf Twice continuously differentiability}] {\rm (see \cite[Proposition in page 20]{HADR})}
 Let $h$ be a twice continuously differentiable and \((\kappa, \gamma)\)-strongly quasar-convex function. Let \(x \neq \overline{x}\) and \(t > 0\). Then, 
\[
 \frac{1}{t} \int_{0}^{t} \frac{\langle \nabla^{2} h(\overline{x} + s(x - \overline{x}))(\overline{x} - x), \overline{x} - x\rangle}{\|x - \overline{x} \|^{2}} ds \geq \kappa \frac{\gamma}{2}.
\]
\end{fact}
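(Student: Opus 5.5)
The plan is to reduce the averaged Hessian quantity to a one-dimensional derivative along the ray from $\ov{x}$ through $x$, and then bound that derivative from below using the first-order characterization in Fact~\ref{fact:char:diff}.

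\textbf{Step 1: reduction to one variable.} Set $d:=x-\ov{x}\neq 0$ and $\varphi(s):=h(\ov{x}+sd)$ for $s\in\R$. Since $h$ is $C^2$, we have $\varphi'(s)=\langle \nabla h(\ov{x}+sd),d\rangle$ and $\varphi''(s)=\langle \nabla^2 h(\ov{x}+sd)d,d\rangle$. The quadratic form is unchanged when $d$ is replaced by $-d=\ov{x}-x$, so the integrand in the statement equals $\varphi''(s)/\|d\|^2$. By the fundamental theorem of calculus,
\[
\int_0^t \varphi''(s)\,ds=\varphi'(t)-\varphi'(0).
\]
Because $\ov{x}$ is a minimizer of the differentiable function $h$ on $\R^n$, we have $\nabla h(\ov{x})=0$, so $\varphi'(0)=0$. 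The claim is therefore equivalent to
\[
\varphi'(t)\ge \frac{\kappa\gamma}{2}\,t\,\|d\|^2 .
\]

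\textbf{Step 2: lower bound on $\varphi'(t)$.} Apply \eqref{diff:squasar} at $y:=\ov{x}+td$, for which $\ov{x}-y=-td$. This gives
\[
h(\ov{x})\ge h(y)-\frac{t}{\kappa}\langle\nabla h(y),d\rangle+\frac{\gamma}{2}t^2\|d\|^2 .
\]
Rearranging yields
\[
t\,\varphi'(t)\ge \kappa\bigl(h(y)-h(\ov{x})\bigr)+\frac{\kappa\gamma}{2}t^2\|d\|^2\ge \frac{\kappa\gamma}{2}t^2\|d\|^2,
\]
where the last inequality uses $h(y)\ge h(\ov{x})=h^*$.

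\textbf{Step 3: conclusion.} Divide the inequality from Step~2 by $t>0$ and then by $t\|d\|^2$. Combined with Step~1, this gives exactly the stated inequality.

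There is no real obstacle here. The only points needing care are that $\varphi'(0)$ vanishes, which is where minimality of $\ov{x}$ and smoothness of $h$ enter, and the sign bookkeeping when substituting $\ov{x}-y=-td$ into the first-order characterization. The nonnegative term $\kappa(h(y)-h(\ov{x}))$ is simply discarded.
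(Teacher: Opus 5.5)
Your proof is correct and complete. You evaluate \eqref{diff:squasar} at $y=\ov{x}+td$, discard the nonnegative gap $\kappa\bigl(h(y)-h(\ov{x})\bigr)$, and write $\varphi'(t)=\int_0^t\varphi''(s)\,ds$ using $\nabla h(\ov{x})=0$; the signs and the scaling by $t$ and $\|d\|^2$ all check out.

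The paper gives no proof of its own here and only cites \cite{HADR}, so there is no in-paper argument to compare with. One small addition: if you keep the gap term and bound it with the quadratic growth \eqref{qwc:sconvex}, the same computation gives the sharper lower bound $\frac{\kappa\gamma}{2-\kappa}$. That is the restricted-secant modulus the paper quotes from \cite[Lemma 4]{HADR}.
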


Finally, observe that in the following sufficient condition, no assumption on (strong) quasar-convexity is needed.

\begin{fact}[{\bf $C^2$ and strong convexity}]\cite[Proposition in page 21]{HADR}
Let \(h\) be a $C^2$ function with a unique minimizer \(\overline{x}\). If $h$ satisfies a quadratic growth condition with modulus $\mu > 0$, then there exists \(\eta > 0\) such that \(h\) is strongly convex on $B(\overline{x}, \eta)$.
\end{fact}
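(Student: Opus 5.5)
The statement to prove is the last fact quoted, \cite[Proposition in page 21]{HADR}: a $C^2$ function $h$ with unique minimizer $\ov{x}$ and quadratic growth with modulus $\mu>0$ is strongly convex on some ball $\mb(\ov{x},\eta)$. My plan is to show that $\nabla^2 h(\ov{x})$ is positive definite. Continuity of the Hessian then gives a uniform positive lower bound on $\nabla^2 h$ near $\ov{x}$, and that bound yields strong convexity on a small ball.

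\textbf{Step 1 (Taylor at the minimizer).} Since $\ov{x}$ is a minimizer of the $C^2$ function $h$ on $\R^n$, we have $\nabla h(\ov{x})=0$. Taylor's theorem then gives, for every unit vector $d$ and small $t>0$,
\[
h(\ov{x}+td)=h(\ov{x})+\frac{t^2}{2}\langle \nabla^2 h(\ov{x})d,d\rangle+o(t^2).
\]
I read quadratic growth with modulus $\mu$ as $h(y)\ge h(\ov{x})+\frac{\mu}{2}\|y-\ov{x}\|^2$; if the convention is $\mu\|y-\ov{x}\|^2$, only the constants change. Comparing the two, dividing by $t^2/2$ and letting $t\downarrow 0$ gives
\[
\langle \nabla^2 h(\ov{x})d,d\rangle\ge \mu \quad\text{for all } \|d\|=1.
\]
So $\nabla^2 h(\ov{x})\succeq \mu I$.

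\textbf{Step 2 (continuity).} Because $\nabla^2 h$ is continuous, there is $\eta>0$ such that $\|\nabla^2 h(x)-\nabla^2 h(\ov{x})\|\le \mu/2$ for all $x\in\mb(\ov{x},\eta)$. Hence $\nabla^2 h(x)\succeq \frac{\mu}{2}I$ on that ball.

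\textbf{Step 3 (strong convexity).} The ball is convex, so for $x,y\in\mb(\ov{x},\eta)$ the whole segment between them lies in the ball. The standard second-order criterion, obtained by integrating the Hessian along the segment, shows that $h-\frac{\mu}{4}\|\cdot\|^2$ is convex there. That is exactly \eqref{strong:convex} with modulus $\gamma=\mu/2$ on $\mb(\ov{x},\eta)$.

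\textbf{Main obstacle.} The only delicate point is Step 1, where the inequality must survive the limit. This works because the growth bound is exactly quadratic, matching the order of the Taylor term, and because the $o(t^2)$ remainder disappears after division by $t^2$. Uniqueness of the minimizer is not actually needed; quadratic growth already forces it. Some smoothness beyond $C^1$ is essential: without the Hessian there is no curvature to bound, and without its continuity Step 2 fails.
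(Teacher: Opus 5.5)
Your proof is correct. Taylor expansion at $\ov{x}$ together with quadratic growth forces $\nabla^2 h(\ov{x})\succeq \mu I$, continuity of the Hessian then gives $\nabla^2 h\succeq \frac{\mu}{2}I$ on a small ball $\mb(\ov{x},\eta)$, and convexity of $h-\frac{\mu}{4}\|\cdot\|^2$ there is exactly \eqref{strong:convex} with $\gamma=\mu/2$. The paper gives no proof of this fact and only cites \cite{HADR}, so I cannot compare routes; your Taylor-plus-continuity argument is the standard one, and your side remarks (uniqueness of the minimizer is implied, continuity of the Hessian is needed) are accurate.
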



\section{Nonsmooth quasar-convex functions: Fundamental properties}\label{sec:03}
In this section, we investigate the characterization and calculus of the class of quasar-convex functions without differentiability assumptions. For the differentiable case, we refer to \cite{guminov2023accelerated,Hardt2018Gradient,HADR,Hinder2020Near,Pun2024,wang2023continuized} and references therein.
 


Let us begin with an example showing that, in the nonsmooth setting, $\kappa$-quasar-convexity may hold either with respect to every minimizer or only with respect to some of them, depending on the geometry of the minimizer set.

\begin{example}\label{exam:noncq_for_all}
Let $C$ be a nonempty closed proper subset of $\mathbb{R}^n$ with nonempty kernel
\[
\ker(C):=\{\overline{x}\in C:[\overline{x},z]\subset C \text{ for every } z\in C\},
\]
let $\alpha\in(0,1)$, and define
$h:\mathbb{R}^n\to\mathbb{R}$ by $
h(x)=\dist(x,C)^\alpha.$
Since $\alpha\in(0,1)$, the function $h$, in general, is nonsmooth and nonconvex. Moreover,
\[
\argmin{\mathbb{R}^n} h = C.
\]
If $\overline{x}\in \ker(C)$, then for every $0<\kappa\le \alpha$, $h$ is $\kappa$-quasar-convex with respect to $\overline{x}$. Indeed, if $x\in \mathbb{R}^n$ and $\lambda\in[0,1]$, choose $p\in C$ such that
\[
\|x-p\|=\dist(x,C).
\]
Since $\overline{x}\in \ker(C)$, we have $c_\lambda:=\lambda\overline{x}+(1-\lambda)p\in C$ and
\[
\dist\bigl(\lambda \overline{x}+(1-\lambda)x,C\bigr)
\le
\|\lambda \overline{x}+(1-\lambda)x-c_\lambda\|
=
(1-\lambda)\dist(x,C).
\]
Therefore,
\[
h\bigl(\lambda \overline{x}+(1-\lambda)x\bigr)
\le
(1-\lambda)^\alpha h(x).
\]
Since $t\mapsto t^\alpha$ is concave on $[0,1]$, we have
$
(1-\lambda)^\alpha\le 1-\alpha\lambda\le 1-\kappa\lambda.
$
Hence
\[
h\bigl(\lambda \overline{x}+(1-\lambda)x\bigr)
\le
(1-\kappa\lambda)h(x)
=
\kappa\lambda h(\overline{x})+(1-\kappa\lambda)h(x),
\]
and thus $h$ is $\kappa$-quasar-convex with respect to $\overline{x}$.

On the other hand, if $\overline{x}\in C\setminus \ker(C)$, then $h$ fails to
be $\kappa$-quasar-convex with respect to $\overline{x}$ for every
$\kappa\in(0,1]$. Indeed, there exist $z\in C$ and $\lambda_0\in(0,1)$ such
that
\[
\lambda_0\overline{x}+(1-\lambda_0)z\notin C.
\]
Since $h(\overline{x})=h(z)=0$ but
$
h\bigl(\lambda_0\overline{x}+(1-\lambda_0)z\bigr)>0,
$
the quasar-convexity inequality cannot hold. Thus, $h$ is
$\kappa$-quasar-convex exactly with respect to the minimizers in $\ker(C)$.

In particular, if $C$ is convex, then $\ker(C)=C$, and $h$ is
$\kappa$-quasar-convex with respect to every minimizer. Figures~\ref{fig:disk_all_minimizers} 
illustrate  these two situations for $\alpha=\frac{1}{2}$, by considering the sets
\[
C_1=\{(x,y)\in\mathbb{R}^2:x^2+y^2\le 1\}
\quad\text{and}\quad
C_2=\bigl([-2,2]\times[-1,1]\bigr)\cup\bigl([-1,1]\times[-2,2]\bigr).
\]
Here, $C_1$ is the convex unit disk, while $C_2$ is a nonconvex cross-shaped set
with
$\ker(C_2)=[-1,1]^2$.
\begin{figure}
     \centering
 \includegraphics[width=\textwidth]{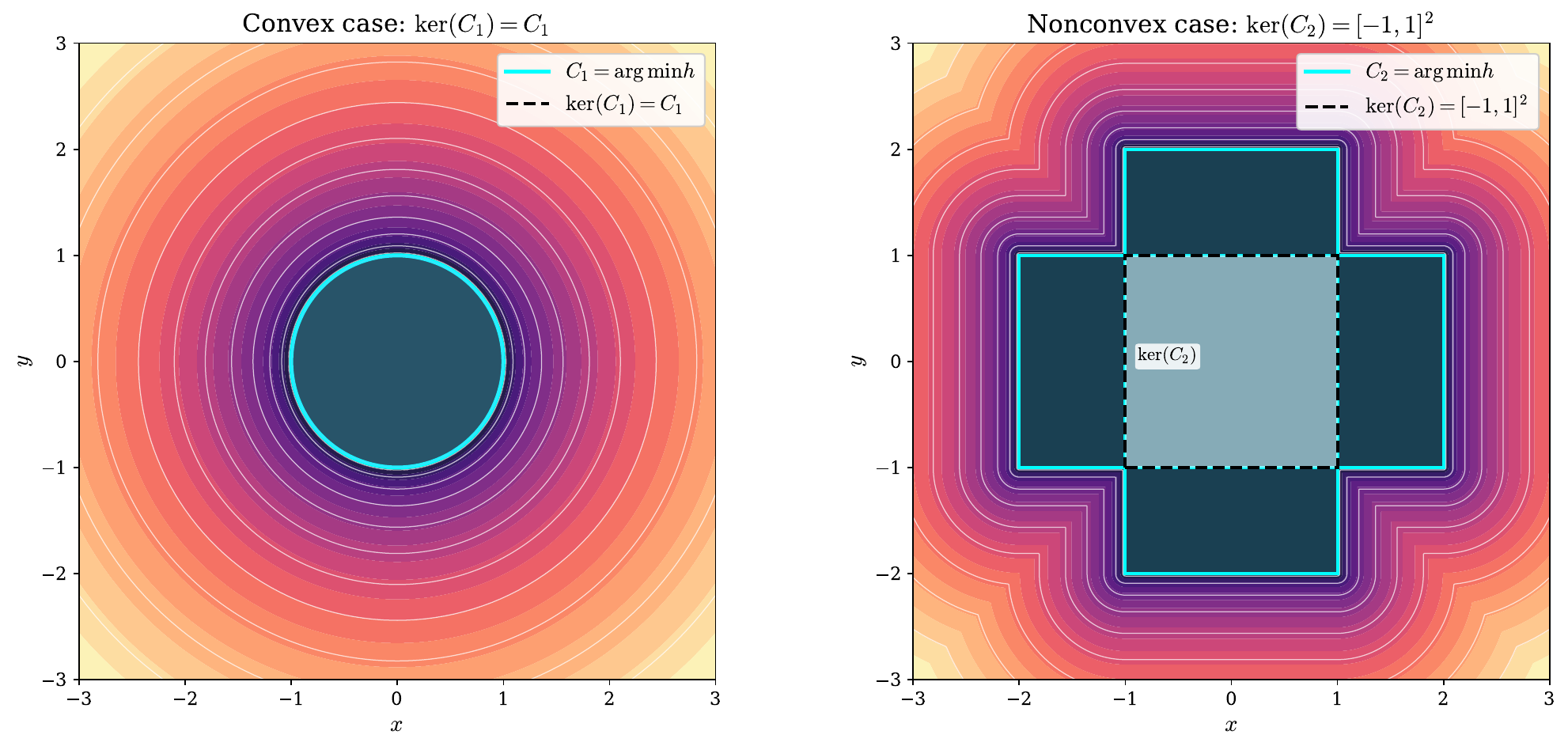}
     \caption{A nonsmooth, nonconvex function that is $\kappa$-quasar-convex with respect to some minimizers; see Example~\ref{exam:noncq_for_all}. }
     \label{fig:disk_all_minimizers}
 \end{figure}


\end{example}

\subsection{Basic properties and calculus rules}

We begin with a result showing that strong quasar-convexity is preserved under positive scaling, translation, and a reduction of the quasar parameter $\kappa$, with a corresponding adjustment of the parameter $\gamma$.

\begin{proposition}[\textbf{Scaling, translation, and parameter transformation}]\label{Scaling}
 Let $h: \mathbb{R}^{n} \to \overline{\mathbb{R}}$ be a proper function  and let $\overline{x} \in \argmin{\mathbb{R}^{n}}\,h$. Assume that $h$ is $(\kappa, \gamma)$-strongly quasar-convex with respect to  $\overline{x}$, where $\gamma \geq 0$. Then the following assertions hold:
\begin{enumerate}[label=(\textbf{\alph*}), font=\normalfont\bfseries, leftmargin=0.7cm]
  \item\label{Scaling:a} For every $\alpha > 0$, the function $g(x):=\alpha h(x)$ is $(\kappa, \alpha \gamma) $-strongly quasar-convex with respect to $\overline{x}$.

   \item\label{Scaling:a2} Define $g(z) := h(\ov{x}+z)$. Then $g$ is $(\kappa, \gamma)$-strongly quasar-convex with respect to $\ov{z}=0\in \argmin{\R^n} g$. 

  \item\label{Scaling:b} For every  $\theta \in \, (0, 1]$, the function $h$ is $(\theta \kappa, \frac{\gamma}{\theta})$-strongly quasar-convex with respect to $\overline{x}$.
 \end{enumerate}
\end{proposition}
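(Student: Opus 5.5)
For \ref{Scaling:a} and \ref{Scaling:a2} I would check the defining inequality \eqref{eq:def:quasar} directly.

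For \ref{Scaling:a}, note first that $\dom g=\dom h$ and $\argmin{\R^n} g=\argmin{\R^n} h$, so $\ov{x}$ is still an admissible reference point. Multiplying \eqref{eq:def:quasar} by $\alpha>0$ gives the inequality for $g$. The quadratic term becomes $\lambda\bigl(1-\frac{\lambda}{2-\kappa}\bigr)\frac{\kappa(\alpha\gamma)}{2}\|x-\ov{x}\|^2$, which is exactly the $(\kappa,\alpha\gamma)$ term.

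For \ref{Scaling:a2}, I would first note that $\dom g=\dom h-\ov{x}$ is convex and that $0\in\argmin{\R^n} g$ with $g(0)=h^*$. Given $z\in\dom g$, set $x=\ov{x}+z\in\dom h$. Then
\[
\lambda\cdot 0+(1-\lambda)z+\ov{x}=\lambda\ov{x}+(1-\lambda)x
\quad\text{and}\quad
\|z-0\|=\|x-\ov{x}\|.
\]
Substituting these into \eqref{eq:def:quasar} yields the inequality for $g$ with respect to $\ov{z}=0$.

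Part \ref{Scaling:b} is where the real work lies. Fix $x\in\dom h$ and $\lambda\in[0,1]$, and write $\Delta:=h(x)-h(\ov{x})\ge 0$. Note that $\theta\kappa\cdot\frac{\gamma}{\theta}=\kappa\gamma$, so the target inequality is
\[
h(\lambda\ov{x}+(1-\lambda)x)\le h(x)-\theta\kappa\lambda\Delta-\lambda\Bigl(1-\frac{\lambda}{2-\theta\kappa}\Bigr)\frac{\kappa\gamma}{2}\|x-\ov{x}\|^2 .
\]
The hypothesis supplies the same right-hand side but with $\kappa\lambda\Delta$ in place of $\theta\kappa\lambda\Delta$ and with $2-\kappa$ in place of $2-\theta\kappa$.

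The function-value part moves in our favour: the hypothesis gains an extra $(1-\theta)\kappa\lambda\Delta\ge 0$. The quadratic part moves against us. Since $2-\theta\kappa\ge 2-\kappa$, the target subtracts more. The deficit is
\[
\frac{\lambda^2\kappa\gamma}{2}\Bigl(\frac{1}{2-\kappa}-\frac{1}{2-\theta\kappa}\Bigr)\|x-\ov{x}\|^2=\frac{\lambda^2\kappa^2\gamma(1-\theta)}{2(2-\kappa)(2-\theta\kappa)}\|x-\ov{x}\|^2 .
\]
This trade-off is the main obstacle, and I would handle it with the quadratic growth inequality \eqref{qwc:sconvex} from Remark~\ref{rem:on_beg_prop}\ref{rem:on_beg_prop:d}.

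If $\gamma=0$, the deficit is zero, and the claim follows immediately from $\Delta\ge0$ and $\theta\le1$. If $\gamma>0$, the growth bound $\Delta\ge\frac{\kappa\gamma}{2(2-\kappa)}\|x-\ov{x}\|^2$ shows that the surplus satisfies
\[
(1-\theta)\kappa\lambda\Delta\ge \frac{\lambda\kappa^2\gamma(1-\theta)}{2(2-\kappa)}\|x-\ov{x}\|^2 .
\]
This dominates the deficit because $\frac{\lambda}{2-\theta\kappa}\le 1$, which holds since $\lambda\le1$ and $2-\theta\kappa\ge1$. Combining the hypothesis with this comparison yields the required inequality, so $h$ is $(\theta\kappa,\gamma/\theta)$-strongly quasar-convex with respect to $\ov{x}$.
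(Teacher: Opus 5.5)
Your proposal is correct and follows essentially the same route as the paper. Parts (a) and (a2) are direct substitutions into \eqref{eq:def:quasar}, and for (b) the paper likewise writes the difference of the two right-hand sides as $\lambda\kappa(1-\theta)\bigl(h(x)-h(\ov{x})-\frac{\lambda\kappa\gamma}{2(2-\kappa)(2-\theta\kappa)}\|x-\ov{x}\|^2\bigr)$ and shows it is nonnegative using $\frac{\lambda}{2-\theta\kappa}\le 1$ together with the quadratic growth bound \eqref{qwc:sconvex}; your separate handling of $\gamma=0$ is slightly more careful, since \eqref{qwc:sconvex} is stated only for $\gamma>0$.
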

\begin{proof}
 \ref{Scaling:a}: It is straightforward. 
 \\
\ref{Scaling:a2} 
We have $\dom g = \dom h - \ov{x}$. Hence, $\dom g$ is convex and $0\in \dom g$.
Set $\ov{z}=0$. Since $\ov{x}\in \argmin{\R^n} h$, for every $z\in\R^n$, it holds that
$g(\ov{z})=h(\ov{x})\le h(\ov{x}+z)=g(z)$.
Thus $\ov{z} = 0 \in \argmin{\R^n} g$. Now, let $z\in \dom g$ and $\lambda\in[0,1]$, i.e.,
\begin{equation}\label{eq:Scaling:a2:1}
    g(\lambda\ov{z}+(1-\lambda)z)=g((1-\lambda)z)=h(\ov{x}+(1-\lambda)z).
\end{equation}
Applying the $(\kappa,\gamma)$-strong quasar-convexity of $h$ at the point $y:=\ov{x}+z$ ensures
\begin{align*}
h(\ov{x}+(1-\lambda)z)
&=h\bigl(\lambda \ov{x}+(1-\lambda)(\ov{x}+z)\bigr) \\
&\le\kappa\lambda h(\ov{x})+(1-\kappa\lambda)h(\ov{x}+z)-\lambda\left(1-\frac{\lambda}{2-\kappa}\right)\frac{\kappa\gamma}{2}
\|(\ov{x}+z)-\ov{x}\|^2.
\end{align*}
This, together with \eqref{eq:Scaling:a2:1}, implies that $g$ is $(\kappa,\gamma)$-strongly quasar-convex with respect to $\ov{z}$.
 \\
\ref{Scaling:b}: Let $x \in \dom{h}$ and $\lambda\in[0,1]$. In virtue of the $(\kappa, \gamma)$-strong quasar-convexity of $h$  with respect to $\overline{x}$, we define:
\begin{align*}
 & R_{\kappa,\gamma} (x,\lambda) := \kappa \lambda h(\overline{x})+(1-\kappa\lambda)h(x) -\lambda \left(1-\frac{\lambda}{2-\kappa} \right) \frac{\kappa \gamma}{2}\|x-\overline{x}\|^2, \\
 & R_{\kappa,\gamma}^{\theta} (x,\lambda) := \theta \kappa \lambda h(\overline{x})+(1-\theta\kappa \lambda) h(x) -\lambda \left(1-\frac{\lambda}{2-\theta \kappa} \right) \frac{\kappa\gamma}{2}\|x-\overline{x}\|^2. 
\end{align*} 
Thus, it is enough to prove 
\begin{equation}\label{eq:lemma}
 R_{\kappa,\gamma} (x,\lambda) \le R_{\kappa, \gamma}^{\theta} (x,\lambda), \qquad \forall ~ x \in \dom{h}.
\end{equation}
A direct computation gives
$$ R_{\kappa,\gamma}^{\theta} (x,\lambda) -R_{ \kappa, \gamma} (x,\lambda) = \lambda \kappa (1-\theta)\big(h(x)-h(\overline{x}) \big) - \frac{\lambda^2 \kappa^2 \gamma(1-\theta)}{2(2-\kappa)(2-\theta\kappa)} \,\|x-\overline{x}\|^2,$$
i.e.,
\begin{equation}\label{eq:001}
 R_{\kappa,\gamma}^{\theta} (x,\lambda) - R_{\kappa, \gamma} (x,\lambda) = \lambda\kappa (1-\theta) \left(
h(x) - h(\overline{x}) - \frac{\lambda \kappa \gamma}{2 (2-\kappa) (2-\theta \kappa)} \|x-\overline{x}\|^2 \right).
\end{equation}
Since $\lambda \in [0, 1]$ and $2 - \theta \kappa \ge 1$, we get
$ \frac{\lambda}{2-\theta\kappa}\le 1$, i.e.,
\begin{equation} \label{equa32} \frac{\lambda \kappa \gamma}{2 (2- \kappa) (2-\theta \kappa)}\,\|x-\overline{x}\|^2 \le \frac{\kappa\gamma}{2(2-\kappa)} \,\|x-\overline{x}\|^2.\end{equation}
Using \eqref{equa32} and relation \eqref{qwc:sconvex}, the right-hand side in \eqref{eq:001} is nonnegative, and \eqref{eq:lemma} holds. Consequently, we have $h(\lambda \overline{x}+(1-\lambda)x)\le R_{\kappa,\gamma}^{\theta} (x,\lambda)$  for all $x \in \dom{h}$, i.e., $h$ is $(\theta \kappa, \gamma/\theta)$-strongly quasar-convex with respect to $\overline{x}$.
\end{proof}
We next establish that the class of strongly quasar-convex functions is closed under finite positive weighted sums, provided that the functions share a common minimizer.
\begin{proposition}[\textbf{Closure under finite positive weighted sums}]\label{prop:sum:weights}
 Let $h_i: \mathbb{R}^{n} \to \overline{\mathbb{R}}$ be proper functions for $i \in \{1,\ldots,m\}$, and assume that
 $\bigcap^{m}_{i=1} \dom{h_{i}} \neq \emptyset$ and $\overline{x} \in \bigcap_{i=1}^m \argmin{\mathbb{R}^n }\,h_i$. Let $\alpha_i>0$ for all $i$, and define
 \begin{equation}\label{sum:weights}
  h(x) := \sum_{i=1}^m \alpha_i h_i(x).
 \end{equation} 
 Assume that each $h_i$ is $(\kappa_i, \gamma_i)$-strongly quasar-convex with respect to $\overline{x}$, where $\gamma_{i} \geq 0$. Then $h$ is $(\kappa, \gamma)$-strongly quasar-convex with respect to $\overline{x}$, where $\kappa := \min_{1 \le i\le m} \kappa_i$ and $\gamma := \frac{1}{\kappa} \sum_{i=1}^m \alpha_i \kappa_i\gamma_i$.
\end{proposition}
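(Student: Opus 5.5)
The plan is to reduce every $h_i$ to the common quasar parameter $\kappa=\min_i\kappa_i$ using Proposition~\ref{Scaling}\ref{Scaling:b}, rescale each term by Proposition~\ref{Scaling}\ref{Scaling:a}, and then simply add the defining inequalities \eqref{eq:def:quasar}, which are linear in the function values.

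First, I record the domain and minimizer bookkeeping. We have $\dom h=\bigcap_i\dom h_i$, which is nonempty by assumption and convex as an intersection of convex sets. Since $\ov x$ minimizes every $h_i$ and $\alpha_i>0$, we get $h(\ov x)=\sum_i\alpha_i h_i(\ov x)\le \sum_i\alpha_i h_i(x)=h(x)$ for all $x$. Hence $\ov x\in\argmin{\R^n}h$, and $h$ is proper.

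Next comes the parameter alignment. For each $i$, set $\theta_i:=\kappa/\kappa_i\in(0,1]$. By Proposition~\ref{Scaling}\ref{Scaling:b}, $h_i$ is $(\kappa,\gamma_i/\theta_i)=(\kappa,\kappa_i\gamma_i/\kappa)$-strongly quasar-convex with respect to $\ov x$. By Proposition~\ref{Scaling}\ref{Scaling:a}, $\alpha_i h_i$ is then $(\kappa,\alpha_i\kappa_i\gamma_i/\kappa)$-strongly quasar-convex. When $\gamma_i=0$, part \ref{Scaling:b} is only needed in its trivial form: since $h_i(x)\ge h_i(\ov x)$, decreasing $\kappa_i$ to $\kappa$ only enlarges the right-hand side of \eqref{eq:def:quasar}. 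This is consistent with the proof of Proposition~\ref{Scaling}\ref{Scaling:b}, because the quadratic correction term vanishes in that case.

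Finally, fix $x\in\dom h$ and $\lambda\in[0,1]$. I write \eqref{eq:def:quasar} for each $\alpha_ih_i$ with the common $\kappa$ and sum over $i$. The terms $\kappa\lambda\,\alpha_ih_i(\ov x)$ and $(1-\kappa\lambda)\alpha_ih_i(x)$ add up to $\kappa\lambda h(\ov x)+(1-\kappa\lambda)h(x)$. The quadratic terms share the factor $\lambda(1-\frac{\lambda}{2-\kappa})\frac{\kappa}{2}\|x-\ov x\|^2$, and their coefficients sum to $\sum_i\alpha_i\kappa_i\gamma_i/\kappa=\gamma$. This yields \eqref{eq:def:quasar} for $h$ with parameters $(\kappa,\gamma)$.

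The only delicate point is the parameter alignment step. Applying Proposition~\ref{Scaling}\ref{Scaling:b} relies on the quadratic growth \eqref{qwc:sconvex} of each $h_i$ when $\gamma_i>0$. It also requires that the factor $\frac{\lambda}{2-\kappa}$ in the correction term uses the same $\kappa$ for every summand. Once all terms share the common $\kappa$, the summation is immediate.
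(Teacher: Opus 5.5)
Your proposal is correct and follows essentially the paper's proof: you align every $h_i$ to the common parameter $\kappa=\min_i\kappa_i$ via Proposition~\ref{Scaling}\ref{Scaling:b} with $\theta_i=\kappa/\kappa_i$, then weight by $\alpha_i$ and sum the defining inequalities. Invoking Proposition~\ref{Scaling}\ref{Scaling:a} for the weights and treating $\gamma_i=0$ separately are only cosmetic differences from the paper, which multiplies by $\alpha_i$ directly.
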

\begin{proof}
 Since each $h_i$ is $(\kappa_i,\gamma_i)$-strongly quasar-convex, the set $\dom{h_i}$ is convex for every $i$. Hence,
$\dom h=\bigcap_{i=1}^m \dom{h_i}$ is convex. Moreover, because $\ov{x} \in \argmin{ \mathbb{R}^n}\,h_i$ for every $i$ and $\alpha_i>0$, we have $\ov{x} \in \argmin{ \mathbb{R}^n} \,h$. 
 For each $i\in\{1,\dots,m\}$,
since $\kappa\le \kappa_i$, define $\theta_i:=\frac{\kappa}{\kappa_i}\in(0,1]$.
By Proposition \ref{Scaling}~\ref{Scaling:b}, 
the $(\kappa_i,\gamma_i)$-strong quasar-convexity of $h_i$ implies that $h_i$ is also
$\left(\theta_i\kappa_i,\frac{\gamma_i}{\theta_i}\right)
=\left(\kappa,\frac{\kappa_i}{\kappa}\gamma_i\right)$-strongly quasar-convex with respect to $\ov{x}$. Hence,
\[
h_i(\lambda \ov x+(1-\lambda)x)
\leq \kappa\lambda h_i(\ov x)+(1-\kappa\lambda)h_i(x)-\lambda\left(1-\frac{\lambda}{2-\kappa}\right)
\frac{\kappa_i\gamma_i}{2}\|x-\ov x\|^2,\qquad \forall x\in\dom h, \lambda\in [0,1],
\]
for $i=1,\dots,m$. Multiplying by $\alpha_i>0$ and summing over $i=1,\dots,m$, for every $x\in\dom h$ and every $\lambda\in[0,1]$, we obtain
\[
h(\lambda \ov x+(1-\lambda)x)\leq\kappa\lambda h(\ov x)+(1-\kappa\lambda)h(x)-\lambda\left(1-\frac{\lambda}{2-\kappa}\right)
\frac{1}{2}\left(\sum_{i=1}^m \alpha_i\kappa_i\gamma_i\right)\|x-\ov x\|^2.
\]
Since $\sum_{i=1}^m \alpha_i\kappa_i\gamma_i=\kappa\gamma$, it follows that
\[
h(\lambda \ov x+(1-\lambda)x) \leq\kappa\lambda h(\ov x)+(1-\kappa\lambda)h(x)-
\lambda\left(1-\frac{\lambda}{2-\kappa}\right)\frac{\kappa\gamma}{2}\|x-\ov x\|^2.
\]
Therefore, $h$ is $(\kappa,\gamma)$-strongly quasar-convex with respect to $\ov x$.
\end{proof}

The following proposition shows that $\kappa$-quasar-convex functions admit no spurious local minima and that local maxima, if they exist, must occur at global minimizers.
\begin{proposition}[\textbf{Local extrema of quasar-convex functions}]\label{maxminlocal}
Let $h: \mathbb R^n \to \overline{\mathbb{R}}$ be a proper function and let $\overline{x} \in\argmin{\mathbb{R}^{n}} h$. Assume that $h$ is $\kappa$-quasar-convex with respect to $\overline{x}$. Then, the following assertions hold:
 \begin{enumerate}[label=(\textbf{\alph*}), font=\normalfont\bfseries, leftmargin=0.7cm]
  \item\label{maxminlocal:a} If $x$ is a local minimizer of $h$, then $x \in \argmin{\mathbb{R}^{n}} h$.

   \item\label{maxminlocal:b} Every local maximizer of $h$ belongs to $\argmin{\mathbb{R}^n}\,h$. In particular, \(h\) has no strict local maximizers.

 \end{enumerate}
\end{proposition}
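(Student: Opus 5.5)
Both parts will be proved by restricting the defining inequality \eqref{eq:def:quasar} with $\gamma=0$ to the segment joining a point $x\in\dom h$ to $\overline{x}$. For $\lambda\in(0,1]$ this gives
\[
h(\lambda\overline{x}+(1-\lambda)x)-h(x)\le \kappa\lambda\bigl(h(\overline{x})-h(x)\bigr).
\]
The plan is to move $x$ slightly toward $\overline{x}$ (small $\lambda$) and read off the sign of this difference. Since $\dom h$ is convex and contains $\overline{x}$, the points $x_\lambda:=\lambda\overline{x}+(1-\lambda)x$ stay in $\dom h$. Moreover, $\|x_\lambda-x\|=\lambda\|x-\overline{x}\|\to 0$ as $\lambda\downarrow 0$, so for small $\lambda$ the points $x_\lambda$ lie in any prescribed neighborhood of $x$.

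For \ref{maxminlocal:a}, let $x$ be a local minimizer. Then $x\in\dom h$, since $h$ is proper and a local minimizer with value $+\infty$ is excluded by the usual convention. I argue by contradiction and suppose $h(x)>h(\overline{x})=h^*$. The displayed inequality then gives $h(x_\lambda)\le h(x)-\kappa\lambda(h(x)-h^*)<h(x)$ for every $\lambda\in(0,1]$. Taking $\lambda$ small enough places $x_\lambda$ inside the neighborhood on which $x$ is a local minimizer, which is a contradiction. Hence $h(x)=h^*$ and $x\in\argmin{\R^n} h$.

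For \ref{maxminlocal:b}, let $x$ be a local maximizer with $h(x)$ finite. This is the only point needing care, and I would note that if $h(x)=+\infty$ the statement must be read within $\dom h$. Suppose $h(x)>h^*$. The same inequality gives $h(x_\lambda)<h(x)$ for all small $\lambda>0$, which is compatible with a local maximum, so this direction yields nothing. I therefore use the other side of the segment: the points $y_t:=x+t(x-\overline{x})$ with $t>0$ small. Writing $x=\lambda\overline{x}+(1-\lambda)y_t$ with $\lambda=t/(1+t)$, the inequality becomes
\[
h(x)\le \kappa\lambda h^*+(1-\kappa\lambda)h(y_t),
\]
so $h(y_t)\ge \bigl(h(x)-\kappa\lambda h^*\bigr)/(1-\kappa\lambda)>h(x)$ whenever $h(x)>h^*$. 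This contradicts local maximality, provided $y_t\in\dom h$, which is the main obstacle. If $y_t\notin\dom h$, then $h(y_t)=+\infty>h(x)$, and the contradiction persists. So in either case $h(x)=h^*$, i.e., $x\in\argmin{\R^n}h$.

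Finally, if $x$ were a strict local maximizer, it would belong to $\argmin{\R^n} h$ by the above, so $h(x)=h^*\le h(y)$ for all $y$. This contradicts $h(y)<h(x)$ for $y\ne x$ near $x$, so $h$ has no strict local maximizers.
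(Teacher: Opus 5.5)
Your proposal is correct and follows essentially the same route as the paper. For (a) you move from $x$ toward $\overline{x}$ along the segment, and for (b) you write $x$ as a convex combination of $\overline{x}$ and the overshoot point $y_t=\overline{x}+(1+t)(x-\overline{x})$ with $\lambda=t/(1+t)$, exactly as the paper does with $y_\varepsilon$ and $\lambda_\varepsilon$. Your explicit treatment of the cases $y_t\notin\dom{h}$ and $h(x)=+\infty$ is more careful than the paper's proof, which tacitly assumes all relevant points lie in $\dom{h}$.
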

\begin{proof}
\ref{maxminlocal:a}: Suppose, by contradiction, that $x$ is a local minimizer of $h$
and $x \notin \argmin{\mathbb{R}^{n}} h$. Then $h(x) > h(\overline{x})$. For $\lambda \in (0,1]$, set $x_\lambda := \lambda\overline x+(1-\lambda)x$. By $\kappa$-quasar-convexity, it is evident that
\[
 h(x_\lambda) \le \kappa\lambda h(\overline x) + (1-\kappa \lambda) h(x) = h(x)-\kappa \lambda ( h(x)-h(\overline x)) < h(x).
\]
Since $x_\lambda\to x$ as $\lambda\downarrow0$, this contradicts the local minimality of $x$. Therefore,
$x \in \argmin{\mathbb{R}^{n}} h$.
\\
\ref{maxminlocal:b}: We prove the contrapositive. Let $x \notin \argmin{\mathbb{R}^{n}} h$. Then $h(x) > h(\overline x)$. For $\varepsilon > 0$, define $y_\varepsilon := \overline x + (1 + \varepsilon) (x-\overline x)$ and $\lambda_\varepsilon := \frac{\varepsilon}{1 + \varepsilon} \in (0,1)$. Then $x = \lambda_\varepsilon \overline x + (1-\lambda_\varepsilon) y_\varepsilon$ and $ y_\varepsilon \to x$ as $\varepsilon \downarrow 0$. Since $h$ is $\kappa$-quasar-convex at $\overline x$, we get
$h(x) \le \kappa \lambda_\varepsilon h(\overline x)
+ (1-\kappa \lambda_\varepsilon) h(y_\varepsilon)$, and hence
$$ h(y_\varepsilon) \ge \frac{h(x)-\kappa\lambda_\varepsilon h(\overline x)}{1-\kappa\lambda_\varepsilon} = h(x) + \frac{\kappa \lambda_\varepsilon}{1- \kappa \lambda_\varepsilon} \big(h(x)-h(\overline x)\big) > h(x).$$
Thus, every neighborhood of $x$ contains a point $y_\varepsilon$ such that
$h(y_\varepsilon)>h(x)$, so $x$ is not a local maximizer.
Therefore, every local maximizer of $h$ belongs to $ \argmin{ \mathbb{R}^n} h$.
Finally, if $x$ were a strict local maximizer, then all nearby points would satisfy
$h(y)<h(x)$, which contradicts the fact that $x\in\argmin{} h$, i.e., $h$ accepts no strict local maximizers.
\end{proof}

Proposition~\ref{maxminlocal} shows that $\kappa$-quasar-convexity with respect to a minimizer $\ov x$ imposes a strong global structure on the function. In particular, the function admits no spurious 
local minima: every local minimizer must be global. Moreover, for every point $x\notin \argmin{ \mathbb{R}^n} h$, there exist points arbitrarily close to $x$ on the line through $\ov x$ and $x$ with strictly smaller and strictly larger function values. Consequently, $h$ has no strict local maximizers.

Next, we study the preservation of strong quasar-convexity under linear composition.
\begin{proposition}[\textbf{Preservation under linear composition}]\label{prop:comp-matrix}
 Let $h: \R^{m} \to \R$ be $(\kappa, \gamma)$-strongly quasar-convex with respect to $\overline{y} \in \argmin{\R^{m}} h$, where  $\gamma \geq 0$, and let $A: \R^{n} \to \R^{m}$ be a linear operator. Define $g := h \circ A$. Then the following assertions hold:
 \begin{enumerate}[label=(\textbf{\alph*}), font=\normalfont\bfseries, leftmargin=0.7cm]
  \item\label{prop:comp-matrix:a} If $\overline{y} \in \mathcal{R}(A)$, then there exists $\overline{x} \in \R^{n}$ such that $A \overline{x} = \overline{y}$, and for every $x \in \overline{x} + \mathcal{R}(A^\top)$ and every $\lambda\in[0,1]$, we have
  \begin{equation}\label{eq:comp-matrix}
   g(\lambda \overline{x} + (1-\lambda)x) \leq \kappa \lambda g( \overline{x}) + (1-\kappa \lambda) g(x) - \lambda \left(1 - \frac{\lambda}{2 - \kappa} \right) \frac{\kappa \gamma \sigma_{\min}^{2} (A)}{2} \lVert x - \overline{x} \rVert^2.
  \end{equation} 

  \item\label{prop:comp-matrix:b} If, in addition, $A$ has full column rank and $\overline{y} \in \mathcal{R}(A)$, then $g$ is $(\kappa, \gamma \sigma_{\min}^2(A))$-strongly quasar-convex on $\R^{n}$ with respect to $\overline{x}$.
 \end{enumerate}
\end{proposition}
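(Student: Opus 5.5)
The plan is to transfer the defining inequality of $h$ through $A$ and then bound the quadratic term using the singular values of $A$.

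\emph{Step 1: choice of $\overline{x}$ and its optimality.} Since $\overline{y}\in\mathcal{R}(A)$, there is $\overline{x}$ with $A\overline{x}=\overline{y}$. For any $x\in\R^n$ we have $g(x)=h(Ax)\ge h(\overline{y})=g(\overline{x})$. Hence $\overline{x}\in\argmin{\R^n} g$, and the domain of $g$ is all of $\R^n$, which is convex.

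\emph{Step 2: transfer the inequality.} Fix $x$ and $\lambda\in[0,1]$. Set $y:=Ax$. By linearity,
\[
A(\lambda\overline{x}+(1-\lambda)x)=\lambda\overline{y}+(1-\lambda)y .
\]
Applying \eqref{eq:def:quasar} for $h$ at $y$ gives
\[
g(\lambda\overline{x}+(1-\lambda)x)\le \kappa\lambda g(\overline{x})+(1-\kappa\lambda)g(x)-\lambda\Bigl(1-\frac{\lambda}{2-\kappa}\Bigr)\frac{\kappa\gamma}{2}\|A(x-\overline{x})\|^2 .
\]
The coefficient $\lambda\bigl(1-\frac{\lambda}{2-\kappa}\bigr)\frac{\kappa\gamma}{2}$ is nonnegative, because $\lambda\le 1\le 2-\kappa$. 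So it suffices to show $\|A(x-\overline{x})\|^2\ge \sigma_{\min}^2(A)\|x-\overline{x}\|^2$.

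\emph{Step 3: the singular-value bound (the key step).} For part~\ref{prop:comp-matrix:a}, take $x\in\overline{x}+\mathcal{R}(A^\top)$, so $d:=x-\overline{x}\in\mathcal{R}(A^\top)=\mathcal{N}(A)^\perp$. Write the SVD $A=\sum_{i\le r}\sigma_i u_iv_i^\top$ with $\sigma_i>0$. The vectors $v_1,\dots,v_r$ form an orthonormal basis of $\mathcal{R}(A^\top)$, so $d=\sum_{i\le r} c_iv_i$. Then
\[
\|Ad\|^2=\sum_{i\le r}\sigma_i^2c_i^2\ge\sigma_{\min}^2(A)\|d\|^2 .
\]
Combining this with Step~2 gives \eqref{eq:comp-matrix}.

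\emph{Step 4: part~\ref{prop:comp-matrix:b}.} If $A$ has full column rank, then $\mathcal{N}(A)=\{0\}$. Hence $\mathcal{R}(A^\top)=\R^n$ and $\overline{x}+\mathcal{R}(A^\top)=\R^n$, so \eqref{eq:comp-matrix} holds for every $x$. Together with Step~1, this is precisely $(\kappa,\gamma\sigma_{\min}^2(A))$-strong quasar-convexity with respect to $\overline{x}$. Here $\sigma_{\min}(A)$ is then the smallest singular value and is positive.

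The only point needing care is Step~3. The restriction to $\mathcal{R}(A^\top)$ is what makes the smallest \emph{nonzero} singular value the right constant. Otherwise, components of $d$ lying in $\mathcal{N}(A)$ would make the bound fail whenever $\gamma>0$.
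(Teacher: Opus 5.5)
Your proposal is correct and follows essentially the same route as the paper. You apply the strong quasar-convexity of $h$ at $Ax$ via linearity of $A$, bound $\|A(x-\overline{x})\|^2\ge\sigma_{\min}^2(A)\|x-\overline{x}\|^2$ for $x-\overline{x}\in\mathcal{R}(A^\top)$, and use $\mathcal{N}(A)=\{0\}$, hence $\mathcal{R}(A^\top)=\R^n$, for part (b). The only differences are cosmetic: you verify that $\overline{x}$ minimizes $g$ at the outset rather than in part (b), and you supply the SVD justification of the singular-value bound, which the paper simply asserts.
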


\begin{proof}
 \ref{prop:comp-matrix:a}: Since $\overline{y} \in \mathcal{R}(A)$, choose $\overline{x}$ such that $A \overline{x} = \overline{y}$.
  Let $x\in \ov x+\mathcal R(A^\top)$ and
$\lambda\in[0,1]$. Since $h$ is $(\kappa,\gamma)$-strongly quasar-convex
with respect to $\ov y$, we have
 \begin{equation}\label{linear_quasar}
\begin{aligned}
 g(\lambda \overline{x} + (1-\lambda)x) 
 & \leq \kappa \lambda h(\overline{y}) + (1-\kappa \lambda) h(Ax) - \lambda \left( 1 - \frac{\lambda}{2 - \kappa} \right) \frac{\kappa \gamma}{2} \lVert Ax - \overline{y} \rVert^2 \\
 & = \kappa \lambda g(\overline{x}) + (1-\kappa \lambda) g(x) - \lambda \left( 1 - \frac{\lambda}{2 - \kappa} \right) \frac{\kappa \gamma}{2} \lVert A(x - \overline{x} ) \rVert^2.
\end{aligned}
\end{equation}
Since $x - \overline{x} \in \mathcal{R}(A^\top)$, we have $\lVert A(x - \overline{x} )\rVert^2 \geq \sigma_{\min}^2(A) \lVert x - \overline{x} \rVert^2$, i.e., \eqref{eq:comp-matrix} follows.
\\
\ref{prop:comp-matrix:b}: 
Assume that $A$ has full column rank. Then
$\mathcal N(A)=\{0\}$, and therefore $\mathcal R(A^\top)=\mathbb{R}^n$, leading to \eqref{eq:comp-matrix} for every $x\in\mathbb{R}^n$.
Since $\ov y\in\argmin{\mathbb{R}^{m}} h$ and $A\ov x=\ov y$, it follows that
\[
g(\ov x)=h(\ov y)\le h(Ax)=g(x)\qquad \forall ~ x \in\mathbb{R}^n,
\]
i.e., $\ov x\in\argmin{\mathbb{R}^{n}} g$. Therefore $g$ is
$(\kappa,\gamma\sigma_{\min}^2(A))$-strongly quasar-convex with respect to
$\ov x$.

\end{proof}

\begin{remark}[\textbf{Necessity of full column rank}]
Even when $A$ does not have full column rank, the proof of Proposition~\ref{prop:comp-matrix}~\ref{prop:comp-matrix:a} shows that, whenever $\ov y\in\mathcal R(A)$ and $A\ov x=\ov y$, the composition $g=h\circ A$ is $\kappa$-quasar-convex with respect to $\ov x$ on $\R^n$; that is, the corresponding inequality holds without the quadratic term.
On the other hand, if $A$ does not have full column rank, then $\mathcal N(A)\neq\{0\}$. Hence there exists $d\neq 0$ such that $Ad=0$,
and therefore
\[
g(x+td)=h(A(x+td))=h(Ax)=g(x),\quad \forall ~ x \in \mathbb{R}^n, \quad \forall ~ t \in\mathbb{R}.
\]
Thus, $g$ is constant along a nontrivial affine line and cannot be
$(\kappa,\tilde\gamma)$-strongly quasar-convex on $\mathbb{R}^n$ with respect to $\ov x$ for any $\tilde\gamma>0$.
\end{remark}

The following example shows that the assumption $\overline{y} \in \mathcal{R} (A)$ in Proposition \ref{prop:comp-matrix} is essential.

 \begin{example}[\textbf{Necessity of the range condition in linear composition}] \label{ex:range-necessary}
 The condition $\ov y\in\mathcal R(A)$ in Proposition \ref{prop:comp-matrix}
is essential. For example, define $h:\mathbb R^2\to\mathbb R$ by
\[
h(y):=
\begin{cases}
  \|y-\overline y\|^2 \left(1 + \frac12\sin^2 \big(3~ \bs\arg(y-\overline y) \big) \right), & y\neq \ov{y}, \\
  0, & y=\ov{y}.
\end{cases}
\]
where $\bs\arg(z) \in (-\pi,\pi]$ denotes the principal argument of $z\in\mathbb R^2 \setminus\{0\}$. The function $h$ is nonconvex, but it is $(\kappa, \gamma)$-strongly quasar-convex with respect to $\overline y \in \argmin{\R^2}\,h$; see \cite[Proposition 8]{HADR}.

Consider the linear operator $A:\mathbb{R}\to\mathbb{R}^2$ defined by $Ax := (x,0)$.
Then $\mathcal{R} (A) = \{(t,0): t \in \mathbb{R}\}$, and hence, for $\ov y:=(0,1)$, we have $\ov y\notin\mathcal R(A)$.
Define $g := h \circ A$. Then for every $x \in \mathbb{R}$, we have $Ax - \overline{y} = (x,-1)$. Let $\theta := \bs\arg(Ax-\overline y)$. Then
\[
\sin(\theta) = -\frac1{\sqrt{1+x^2}},
\]
and hence
\[
\sin(3\theta) = 3\sin(\theta) - 4 \sin^3(\theta) = \frac{1-3x^2}{(1+x^2)^{3/2}}.
\]
Consequently,
\begin{equation*}
 g(x) = h(Ax) = 1+x^2 + \frac{(3x^2-1)^2}{ 2(1+x^2)^2}.
\end{equation*}
Hence, $g(0)=\frac{3}{2}$ and $g(\frac{1}{{\sqrt3}}) = \frac{4}{3} < g(0)$, i.e., $0 \notin \argmin{\R}\,g$. Moreover,
$$ g(x)-g(0) = \frac{x^2(x^4+6x^2-3)}{(1+x^2)^2},$$
and thus $g(x)<g(0)$ for all $0<|x|< \sqrt{-3 + 2 \sqrt3}$. Then $x=0$ is a strict local maximizer of $g$.

By Proposition~\ref{maxminlocal}~\ref{maxminlocal:b}, $g$ cannot be
$\kappa$-quasar-convex with respect to any global minimizer. Thus,
when $\ov y\notin\mathcal R(A)$, the composition $h\circ A$ may fail to
be even $\kappa$-quasar-convex. This shows that the assumption
$\ov y\in\mathcal R(A)$ in Proposition \ref{prop:comp-matrix} cannot be removed.

\end{example}

The following auxiliary lemma will be used in the next proposition. Since its proof is elementary, we omit it.

\begin{lemma}[\textbf{Auxiliary constant}]\label{lem:c-explicit}
Let $\kappa_1 \in \, (0, 1)$ and $\kappa_2 \in \, (0, 1]$. Then, it follows that
\begin{equation}\label{eq:def-c}
 C_{\kappa_1,\kappa_2} := \inft{\lambda\in[0,1]} \frac{ 1-\frac{\lambda}{2 - \kappa_1}} {1 - \frac{\lambda}{2 - \kappa_1 \kappa_2}} = \frac{(1 - \kappa_1) (2 - \kappa_1 \kappa_2)} {(2-\kappa_1) (1-\kappa_1 \kappa_2)} > 0.
\end{equation}
\end{lemma}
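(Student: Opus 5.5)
The ratio is a quotient of two affine functions of $\lambda$, so it is monotone on $[0,1]$. The plan is to determine the direction of monotonicity, evaluate at the endpoint where the infimum is attained, and then check positivity.

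Set $a := \frac{1}{2-\kappa_1}$ and $b := \frac{1}{2-\kappa_1\kappa_2}$. Since $\kappa_1\in(0,1)$ and $\kappa_2\in(0,1]$, we have $\kappa_1\kappa_2\le\kappa_1$, hence $2-\kappa_1\kappa_2\ge 2-\kappa_1>1$. Therefore $0<b\le a<1$.

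The denominator $1-b\lambda$ is at least $1-b>0$ on $[0,1]$, so the function $f(\lambda):=\frac{1-a\lambda}{1-b\lambda}$ is well defined and smooth there. Differentiating gives
\[
f'(\lambda)=\frac{-a(1-b\lambda)+b(1-a\lambda)}{(1-b\lambda)^2}=\frac{b-a}{(1-b\lambda)^2}\le 0 .
\]
Thus $f$ is nonincreasing on $[0,1]$, and the infimum is attained at $\lambda=1$. In the degenerate case $\kappa_2=1$ we have $a=b$, so $f\equiv 1$, which is consistent with the formula.

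Evaluating at $\lambda=1$:
\[
f(1)=\frac{1-a}{1-b}=\frac{(1-\kappa_1)/(2-\kappa_1)}{(1-\kappa_1\kappa_2)/(2-\kappa_1\kappa_2)},
\]
which simplifies to the stated expression $\frac{(1-\kappa_1)(2-\kappa_1\kappa_2)}{(2-\kappa_1)(1-\kappa_1\kappa_2)}$.

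For positivity, each of the four factors is positive: $1-\kappa_1>0$ because $\kappa_1<1$, and $1-\kappa_1\kappa_2>0$ because $\kappa_1\kappa_2<1$; the other two factors exceed $1$. This is exactly where the strict hypothesis $\kappa_1<1$ is needed. If $\kappa_1=1$, the numerator $1-\lambda/(2-\kappa_1)$ would vanish at $\lambda=1$, and the infimum would be $0$ (or undefined when also $\kappa_2=1$).

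The only step requiring care is fixing the sign of $b-a$, which determines which endpoint gives the infimum. Everything else is routine algebra.
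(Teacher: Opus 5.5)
Your proposal is correct: the sign of $f'(\lambda)=(b-a)/(1-b\lambda)^2$ makes the ratio nonincreasing, so the infimum is $f(1)$, and the stated formula and its positivity follow. The paper omits this proof as elementary, and your argument is the natural routine verification it has in mind.
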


As a consequence, we obtain a closure criterion for compositions with a nondecreasing quasar-convex function.

\begin{proposition}[\textbf{Composition with a quasar-convex function}]\label{increa:prop}
 Let $h: \mathbb{R}^{n} \to \overline{\mathbb{R}}$ be a proper function, $\overline{x} \in \argmin{\mathbb{R}^n} h$, $\kappa_1 \in \, ( 0, 1)$, and $\gamma \ge 0$. Assume that $h$ is $(\kappa_1, \gamma)$-strongly quasar-convex with respect to $\overline{x}$. Let $I \subseteq \mathbb{R}$ be an interval such that $h(\mathbb{R}^n) \subseteq I$, and let $\varphi: I \to \mathbb{R}$ be $\kappa_2$-quasar-convex function with respect to $h(\overline{x})$, where $\kappa_2 \in \, (0, 1]$. Suppose in addition that there exists $m \ge 0$ such that
\begin{equation}\label{eq:m-increase}
 \varphi(t) - \varphi(s) \ge m(t-s), \qquad \forall ~ t \ge s, ~ t,s \in I.
\end{equation}
Then, the function $g: \R^n \to \mathbb{R}$ given by $g := \varphi \circ h$
is $(\kappa, \gamma_g)$-strongly quasar-convex with respect to $\overline{x}$, where $\kappa := \kappa_1 \kappa_2$, $\gamma_g := \frac{C_{\kappa_1,\kappa_2}\, m\, \gamma}{\kappa_2}$ and $C_{\kappa_1,\kappa_2}> 0$ is the constant given in Lemma~\ref{lem:c-explicit}. 
\end{proposition}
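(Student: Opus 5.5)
Set $t_0:=h(\ov x)$, and for $x\in\dom h$, $\lambda\in[0,1]$ put $x_\lambda:=\lambda\ov x+(1-\lambda)x$. The plan is to push the strong quasar-convexity inequality of $h$ through $\varphi$ in two steps: monotonicity of $\varphi$ first, then its quasar-convexity. This produces the factor $(1-\kappa_2\mu)$ with $\mu=\kappa_1\lambda$, which is exactly what $\kappa=\kappa_1\kappa_2$ requires.

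First, $\ov x\in\argmin{\R^n} g$. Since $t_0$ is a minimizer of $\varphi$ on $I$ (this is part of $\varphi$ being $\kappa_2$-quasar-convex with respect to $h(\ov x)$), we have $\varphi(h(x))\ge \varphi(t_0)$. Note also that \eqref{eq:m-increase} with $m\ge 0$ makes $\varphi$ nondecreasing.

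Next, write $h(x_\lambda)\le T:=\kappa_1\lambda t_0+(1-\kappa_1\lambda)h(x)-D$, where
\[
D:=\lambda\Bigl(1-\tfrac{\lambda}{2-\kappa_1}\Bigr)\tfrac{\kappa_1\gamma}{2}\|x-\ov x\|^2\ge 0 .
\]
Let $S:=\kappa_1\lambda t_0+(1-\kappa_1\lambda)h(x)$. Then $S\in I$, since it is a convex combination of points of $I$. Also $h(x_\lambda)\le S-D\le S$, and $h(x_\lambda)\in I$. Applying \eqref{eq:m-increase} with $t=S$, $s=h(x_\lambda)$ gives
\[
\varphi(h(x_\lambda))\le \varphi(S)-m\bigl(S-h(x_\lambda)\bigr)\le \varphi(S)-mD .
\]
Now $S=\mu t_0+(1-\mu)h(x)$ with $\mu=\kappa_1\lambda\in[0,1]$. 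By $\kappa_2$-quasar-convexity of $\varphi$ with respect to $t_0$,
\[
\varphi(S)\le \kappa_2\kappa_1\lambda\,\varphi(t_0)+(1-\kappa_1\kappa_2\lambda)\varphi(h(x)) ,
\]
which is $\kappa\lambda g(\ov x)+(1-\kappa\lambda)g(x)$.

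It remains to show that
\[
mD\ \ge\ \lambda\Bigl(1-\tfrac{\lambda}{2-\kappa}\Bigr)\tfrac{\kappa\gamma_g}{2}\|x-\ov x\|^2 .
\]
With $\kappa\gamma_g=\kappa_1 C_{\kappa_1,\kappa_2}m\gamma$, and after cancelling $\lambda\kappa_1 m\gamma\|x-\ov x\|^2/2$, this reduces to
\[
1-\tfrac{\lambda}{2-\kappa_1}\ \ge\ C_{\kappa_1,\kappa_2}\Bigl(1-\tfrac{\lambda}{2-\kappa_1\kappa_2}\Bigr)\qquad\text{for all }\lambda\in[0,1].
\]
This is exactly the definition of $C_{\kappa_1,\kappa_2}$ as an infimum in Lemma~\ref{lem:c-explicit}. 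The denominator $1-\lambda/(2-\kappa_1\kappa_2)$ is positive because $\kappa_1\kappa_2<1$. The requirement $\kappa_1<1$ is used here.

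The main care point is the membership bookkeeping: $S\in I$ and $h(x_\lambda)\in I$ are both needed for \eqref{eq:m-increase} and for the quasar-convexity of $\varphi$ to apply. Both follow from $h(\R^n)\subseteq I$ and the convexity of $I$. One also has to check that $\dom g$ is convex and contains $x_\lambda$ (here $g$ is real-valued on $\dom h$), and that the quadratic-term comparison goes in the right direction, which is why the infimum rather than a supremum appears. If $h$ takes the value $+\infty$, $g$ is only meaningful on $\dom h$, and I would restrict all statements to $x\in\dom h$ as in Definition~\ref{def:quasar}.
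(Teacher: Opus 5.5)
Your proposal is correct and follows the paper's proof essentially step for step. You apply \eqref{eq:m-increase} with $t=\kappa_1\lambda h(\ov{x})+(1-\kappa_1\lambda)h(x)$ and $s=h(\lambda\ov{x}+(1-\lambda)x)$, then use the $\kappa_2$-quasar-convexity of $\varphi$ with parameter $\kappa_1\lambda$, and finally compare the quadratic terms through the infimum defining $C_{\kappa_1,\kappa_2}$ in Lemma~\ref{lem:c-explicit}. The only cosmetic difference is how you show $\ov{x}\in\argmin{\R^n} g$: you use that $h(\ov{x})$ minimizes $\varphi$, whereas the paper uses that $\varphi$ is nondecreasing; both are valid.
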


\begin{proof}
Since $\overline{x} \in \argmin{ \mathbb{R}^n} h$ and $\varphi$ is nondecreasing, $\overline{x}\in \argmin{ \mathbb{R}^n}\,g$. 
Fix $x \in \dom{h}$ and $\lambda \in [0,1]$. Since $h$ is $(\kappa_1, \gamma)$-strongly quasar-convex with respect to $\overline{x}$, we have
\begin{equation*}
 h(\lambda \overline{x} + (1 - \lambda) x) \le \kappa_1 \lambda h(\overline{x}) + (1-\kappa_1 \lambda) h(x) - \lambda \left( 1 - \frac{ \lambda}{2 - \kappa_1} \right) \frac{\kappa_1 \gamma}{2} \|x - \overline{x} \|^2.
\end{equation*}
Set $t := \kappa_1 \lambda h(\overline{x}) + (1 - \kappa_1 \lambda) h(x)$ and  $s := h(\lambda \overline{x} + (1-\lambda)x)$. Then, $t \geq s$. Applying \eqref{eq:m-increase} implies $\varphi(s) \le \varphi(t) - m(t-s)$, i.e.,
\begin{equation}\label{eq:phi-step1}
 g(\lambda\overline{x}+(1-\lambda)x) \le \varphi \big(\kappa_1\lambda h(\overline{x}) + (1-\kappa_1 \lambda) h(x)\big) - \lambda \left(1 - \frac{\lambda}{2 - \kappa_1} \right) \frac{m \kappa_1 \gamma}{2}\|x-\overline{x}\|^2.
\end{equation}
Invoking the $\kappa_2$-quasar-convexity of $\varphi$ with respect to $h(\overline{x})$ ensures
\begin{equation}\label{eq:phi-step2}
 \varphi\big(\kappa_1 \lambda h(\overline{x})+(1-\kappa_1 \lambda)h(x) \big) \le \kappa_2 \kappa_1 \lambda g(\overline{x}) + (1-\kappa_2 \kappa_1 \lambda) g(x).
\end{equation}
Together with \eqref{eq:phi-step1}, \eqref{eq:phi-step2} implies
\begin{equation}\label{eq:g-pre}
 g(\lambda \overline{x} + (1-\lambda)x) \le \kappa_1 \kappa_2 \lambda g(\overline{x}) + (1-\kappa_1 \kappa_2 \lambda)g(x) - \lambda\left(1-\frac{ \lambda}{2 - \kappa_1}\right) \frac{m \kappa_1 \gamma}{2} \|x-\overline{x}\|^2.
\end{equation}
By the definition of $C_{\kappa_1,\kappa_2}$, and applying Lemma \ref{lem:c-explicit} in \eqref{eq:g-pre}, it can be concluded that
$$ g(\lambda \overline{x} + (1-\lambda)x) \le \kappa_1 \kappa_2 \lambda g(\overline{x}) +(1-\kappa_1 \kappa_2 \lambda)g(x) - \lambda\left(1 - \frac{\lambda}{2 - \kappa_1 \kappa_2} \right) \frac{C_{\kappa_1,\kappa_2} \, m \kappa_1 \gamma}{2} \|x-\overline{x} \|^2.$$
Since $\kappa = \kappa_1 \kappa_2$, this shows that $g$ is $(\kappa,\gamma_g)$-strongly quasar-convex with respect to $\overline{x}$.  
\end{proof}

\begin{remark}\label{rem:kappa1-one}
In Proposition \ref{increa:prop}, the case $\kappa_1 = 1$ can be handled by a scaling argument.  
Assume that $h$ is $(1,\gamma)$-strongly quasar-convex with respect to $\overline{x}$. Fix any $\theta \in (0,1)$, by Proposition \ref{Scaling}~\ref{Scaling:b}, the function $h$ is also $(\theta, \gamma/\theta)$-strongly quasar-convex with respect to $\overline{x}$. Applying Proposition \ref{increa:prop}, we obtain that $g = \varphi \circ h$ is $(\theta \kappa_2, \gamma_g (\theta))$-strongly quasar-convex with respect to $\overline{x}$, where $\gamma_g(\theta) := \frac{C_{\theta,\kappa_2}  m \gamma}{\theta \kappa_2}$ and $C_{\theta,\kappa_2} > 0$ is the constant from Lemma \ref{lem:c-explicit}.
\end{remark}


In the case $\gamma=0$, that is, when $h$ is $\kappa_1$-quasar-convex, condition~\eqref{eq:m-increase} reduces to $m=0$, and it is enough to assume that $\varphi$ is nondecreasing.

\begin{corollary}[\textbf{Quasar-convex composition}]\label{increa:coro}
 Let $h: \mathbb{R}^n \to \overline{\mathbb{R}}$ be a proper function, $\overline{x} \in \argmin{\mathbb{R}^n}\,h$, and let $I \subseteq \mathbb{R}$ be such that $h(\mathbb{R}^n) \subseteq I$.  Assume that $h$ is $\kappa_1$-quasar-convex with  respect to $\overline{x}$ for some $\kappa_1 \in \,  (0, 1]$, and let 
 $\varphi: I \to \mathbb{R}$ be nondecreasing and $\kappa_2$-quasar-convex  with respect to $h(\overline{x})$, for some $\kappa_2 \in \, (0, 1]$.  Then $\varphi \circ h$ is $(\kappa_1 \kappa_2)$-quasar-convex with respect to $\overline{x}$.
\end{corollary}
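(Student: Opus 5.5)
The plan is to repeat the two-step argument of Proposition~\ref{increa:prop} directly, with $\gamma=0$, rather than invoking that proposition. This avoids its restriction $\kappa_1\in(0,1)$ and the constant $C_{\kappa_1,\kappa_2}$. With no quadratic term there is nothing to transfer through $\varphi$, so monotonicity of $\varphi$ replaces condition~\eqref{eq:m-increase}.

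\emph{Minimizer.} Since $h(\ov{x})\le h(x)$ for all $x$ and $\varphi$ is nondecreasing, we get $\varphi(h(\ov{x}))\le\varphi(h(x))$. Hence $\ov{x}\in\argmin{\R^n}(\varphi\circ h)$. The domain of $g:=\varphi\circ h$ is the domain of $h$ (as $\varphi$ is real-valued on $I$), so it is convex.

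\emph{Main inequality.} Fix $x\in\dom h$ and $\lambda\in[0,1]$, and set
\[
s:=h(\lambda\ov{x}+(1-\lambda)x), \qquad t:=\kappa_1\lambda h(\ov{x})+(1-\kappa_1\lambda)h(x).
\]
The $\kappa_1$-quasar-convexity of $h$ gives $s\le t$. Moreover $t$ is a convex combination of $h(\ov{x})$ and $h(x)$, so $t\in I$ because $I$ is an interval containing both values. Monotonicity of $\varphi$ then yields $g(\lambda\ov{x}+(1-\lambda)x)=\varphi(s)\le\varphi(t)$.

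Next apply the $\kappa_2$-quasar-convexity of $\varphi$ with respect to its minimizer $h(\ov{x})$. Take the parameter $\kappa_1\lambda\in[0,1]$ and the point $h(x)\in I$; then $t=(\kappa_1\lambda)h(\ov{x})+(1-\kappa_1\lambda)h(x)$, so
\[
\varphi(t)\le \kappa_2\kappa_1\lambda\,\varphi(h(\ov{x}))+(1-\kappa_1\kappa_2\lambda)\,\varphi(h(x)).
\]
Chaining the two inequalities gives exactly the $\kappa_1\kappa_2$-quasar-convexity inequality for $g$ with respect to $\ov{x}$.

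The only delicate point is checking the hypotheses of the $\kappa_2$-quasar-convexity of $\varphi$, which has two parts. First, $h(\ov{x})$ must actually be a minimizer of $\varphi$ on $I$; this is implicit in the phrase ``$\kappa_2$-quasar-convex with respect to $h(\ov{x})$'' in Definition~\ref{def:quasar}. Second, the intermediate value $t$ must lie in $I$, which is guaranteed because $I$ is an interval. Beyond these checks the proof is routine.
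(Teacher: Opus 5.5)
Your proof is correct and is the paper's argument in substance. The paper obtains the corollary by invoking Proposition~\ref{increa:prop} with $\gamma=0$, so that $m=0$ and \eqref{eq:m-increase} reduces to monotonicity of $\varphi$. The proof of that proposition is exactly your chain $\varphi(s)\le\varphi(t)\le\kappa_1\kappa_2\lambda\,\varphi(h(\ov x))+(1-\kappa_1\kappa_2\lambda)\,\varphi(h(x))$, see \eqref{eq:phi-step1}--\eqref{eq:phi-step2}.

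Running the argument directly has a minor advantage: it covers $\kappa_1=1$, which the proposition as stated excludes. The paper would instead have to pass through Remark~\ref{rem:kappa1-one} and then let $\theta\uparrow 1$.

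One small point: the corollary does not state that $I$ is an interval, which you use to get $t\in I$. This is justified by the convex-domain requirement in Definition~\ref{def:quasar} applied to $\varphi$.
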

\begin{proof}
This follows directly from Proposition \ref{increa:prop} by taking $\gamma=0$.
\end{proof}
We illustrate Corollary~\ref{increa:coro} with the following example.

\begin{example}[\textbf{Power-root transformation}]\label{ex:root-increa}
Let $p \in \mathbb{N}$ with $p\ge 2$. Let $h: \R^n \to [0,+\infty)$, and let $\ov{x} \in \argmin{\mathbb{R}^n} h$ be such that $h(\ov{x})=0$. Assume that $h$ is $\kappa_1$-quasar-convex with respect to $\ov{x}$, where $\kappa_1 \in (0,1)$.
Define $\varphi(t) := \sqrt[p]{t}$ on $[0, + \infty[$ and set $g := \varphi \circ h$. The function $\varphi$ is nondecreasing, and it follows from \cite[Example 11]{deBrito2025extending} that
$\varphi$ is $\kappa_2$-quasar-convex at $0$, with $\kappa_2=1/p$.
 Hence, by Corollary \ref{increa:coro}, the function $g(x) = \sqrt[p]{h(x)}$ is $\frac{\kappa_1}{p}$-quasar-convex with respect to  $\overline{x}$.
\end{example}

\begin{remark}[\textbf{Comments on the composition rule}]
\begin{enumerate}[label=(\textbf{\alph*}), font=\normalfont\bfseries, leftmargin=0.7cm]
 \item  The assumption that $\varphi$ is quasar-convex in Proposition~\ref{increa:prop} is essential. Consider the one-dimensional case with $\ov{x}=0$, $h(x)=x^2$, $\varphi(t)=1-e^{-t}$ ($t\ge 0$), and $g:=\varphi\circ h$.
Then $h$ is $(1,2)$-strongly quasar-convex with respect to $\ov{x}$, and $\varphi$ is strictly increasing on $[0,+\infty)$. However, $g(x)=1-e^{-x^2}$ is not $\kappa$-quasar-convex with respect to $0$ for any $\kappa\in(0,1]$.
Indeed, suppose by contradiction that, for all $x\in\mathbb{R}$ and $\lambda\in[0,1]$,
\[
g((1-\lambda)x)\le\kappa \lambda g(0)+(1-\kappa \lambda)g(x).
\]
Since $g(0)=0$, choosing $\lambda=\tfrac{1}{2}$ yields
\[
1-e^{-x^2/4}\le\left(1-\frac{\kappa}{2}\right)(1-e^{-x^2}).
\]
Letting $|x|\to\infty$, we obtain $1 \le 1-\frac{\kappa}{2}$,
which is impossible. Hence $g$ is not $\kappa$-quasar-convex for any $\kappa\in(0,1]$.

 \item  When $h$ is quasar-convex and $\varphi$ is increasing, the composition $g := \varphi \circ h$ is star-quasiconvex by \cite[Theorem 11]{lara2026debreu} (see also \cite{khanh2025star,nguyen2026projected}). 
\end{enumerate}
\end{remark}

\subsection{First-order characterizations and further properties}


In this subsection, we derive first-order characterizations of strongly quasar-convex functions and establish several structural properties that will be used in the subsequent analysis. In the nonsmooth setting, strong quasar-convexity admits a characterization in terms of the Clarke subdifferential, extending the classical gradient-based formulation available in the differentiable case. This viewpoint clarifies the underlying geometry and provides a convenient tool for studying optimality conditions, error bounds, and convergence properties of high-order proximal-point methods.

We begin with the following auxiliary lemma, which will be instrumental in deriving the first-order characterization of strong quasar-convexity.

\begin{lemma}[\textbf{Chain rule along a segment}]\label{lem:clarke-chain}
 Let $h: \mathbb R^n \to \overline{\mathbb R}$ be a proper function with convex domain, and assume that $h$ is locally Lipschitz on $\dom{h}$. Let $x, y \in \dom{h}$, and for $\lambda \in [0,1]$ define
 $x_\lambda := (1-\lambda)x + \lambda y$, and $g(\lambda) := h(x_\lambda)$.
Then $g$ is absolutely continuous on $[0,1]$ and, for almost everywhere $\lambda \in (0,1)$, there exists $\xi_\lambda \in \partial^C h(x_\lambda)$ such that 
$$g'(\lambda) = \langle \xi_\lambda, y - x \rangle.$$
\end{lemma}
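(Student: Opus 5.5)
The plan is to show first that $g$ is Lipschitz on $[0,1]$, and then to identify its derivative through the Clarke directional derivative, using the compactness of the segment.

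\textbf{Step 1: $g$ is Lipschitz.} The segment $[x,y]$ is compact and lies in $\dom h$, because the domain is convex. Around each point $x_\lambda$, $h$ is Lipschitz on some ball $\mb(x_\lambda,r_\lambda)$ with constant $L_\lambda$. Finitely many of these balls cover $[x,y]$. Take a Lebesgue number for this cover and let $L$ be the maximum of the finitely many constants. Splitting any subsegment into pieces shorter than the Lebesgue number gives
\[
|g(\lambda)-g(\mu)|\le L\|y-x\|\,|\lambda-\mu| \quad\text{for all } \lambda,\mu\in[0,1].
\]
So $g$ is Lipschitz, hence absolutely continuous. By Rademacher's (or Lebesgue's) theorem, $g'(\lambda)$ exists for almost every $\lambda\in(0,1)$.

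\textbf{Step 2: two-sided bound on $g'(\lambda)$.} Fix such a $\lambda$ and set $d:=y-x$. Then
\[
g'(\lambda)=\lim_{t\downarrow0}\frac{h(x_\lambda+td)-h(x_\lambda)}{t}\le h^\circ(x_\lambda;d),
\]
since this limsup is taken along the particular choice $y'=x_\lambda$ in the definition of $h^\circ$. Taking the limit from the other side,
\[
g'(\lambda)=\lim_{t\downarrow0}\frac{h(x_\lambda)-h(x_\lambda-td)}{t}.
\]
Put $z=x_\lambda-td\to x_\lambda$. Then
\[
-g'(\lambda)=\lim_{t\downarrow0}\frac{h(z+td)-h(z)}{t}\cdot(-1),
\]
and the quotient $\frac{h(z+td)-h(z)}{t}$ is bounded by the limsup defining $h^\circ(x_\lambda;d)$. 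Rearranging this differently, $-g'(\lambda)$ is the limit of $\frac{h(x_\lambda-td)-h(x_\lambda)}{t}$, which is at most $h^\circ(x_\lambda;-d)$. Combining the two one-sided estimates gives
\[
-h^\circ(x_\lambda;-d)\le g'(\lambda)\le h^\circ(x_\lambda;d).
\]

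\textbf{Step 3: choosing $\xi_\lambda$.} By \eqref{prop:direc},
\[
h^\circ(x_\lambda;d)=\max_{\xi\in\partial^C h(x_\lambda)}\langle\xi,d\rangle,\qquad -h^\circ(x_\lambda;-d)=\min_{\xi\in\partial^C h(x_\lambda)}\langle\xi,d\rangle.
\]
The set $\partial^C h(x_\lambda)$ is nonempty, convex and compact, so its image under the linear map $\xi\mapsto\langle\xi,d\rangle$ is the whole interval between this minimum and maximum. Since $g'(\lambda)$ lies in that interval by Step 2, some $\xi_\lambda\in\partial^C h(x_\lambda)$ satisfies $\langle\xi_\lambda,d\rangle=g'(\lambda)$, which is the claim.

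\textbf{Main obstacle.} Two subtleties need care. At the endpoints $\lambda\in\{0,1\}$, or wherever $x_\lambda$ lies on the boundary of $\dom h$, local Lipschitzness "on $\dom h$" has to be read as $h$ being finite and Lipschitz on a full neighborhood of $x_\lambda$, since that is what the definition of $h^\circ$ requires. I would state this reading explicitly, because the paper's definition of $\partial^C h$ assumes $h$ is Lipschitz around the point. The second subtlety is the covering argument in Step 1, which is what upgrades local Lipschitzness to a global Lipschitz bound along the segment.
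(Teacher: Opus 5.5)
Your proof is correct, and its second half takes a different route from the paper's. Both arguments get that $g$ is Lipschitz on $[0,1]$ from compactness of the segment; your Lebesgue-number argument just spells out the paper's one-line claim.

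The paper then cites Clarke's chain rule for composition with the affine map $\lambda\mapsto x_\lambda$. This gives $\partial^C g(\lambda)\subseteq\{\langle\eta,y-x\rangle:\eta\in\partial^C h(x_\lambda)\}$, and the paper finishes by asserting $\partial^C g(\lambda)=\{g'(\lambda)\}$ at points of differentiability.

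You avoid the chain rule altogether:
\begin{itemize}
\item The forward and backward difference quotients at the fixed base point $x_\lambda$ give $-h^\circ(x_\lambda;-d)\le g'(\lambda)\le h^\circ(x_\lambda;d)$ directly from the definition of $h^\circ$.
\item The max-formula, together with convexity and compactness of $\partial^C h(x_\lambda)$, turns these bounds into an exact $\xi_\lambda$ by an intermediate-value argument.
\end{itemize}
In effect you reprove exactly the special case of the chain rule that is needed. The paper's version is shorter; yours is self-contained.

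Your route also sidesteps a slip in the paper. The equality $\partial^C g(\lambda)=\{g'(\lambda)\}$ requires strict differentiability, and for suitable Lipschitz $g$ it fails at almost every point. The true inclusion $g'(\lambda)\in\partial^C g(\lambda)$ is all the paper's argument actually uses.

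One cleanup: the substitution $z=x_\lambda-td$ in your Step~2 only reproduces the upper bound $g'(\lambda)\le h^\circ(x_\lambda;d)$ and can be deleted. The lower bound comes from $\bigl(h(x_\lambda-td)-h(x_\lambda)\bigr)/t\to-g'(\lambda)$ with direction $-d$, as you then say.

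Your reading of local Lipschitzness on $\dom{h}$ as holding on full neighborhoods is the one the paper also uses implicitly.
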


\begin{proof}
 Since $[x,y]$ is compact and $h$ is locally Lipschitz around each point of this segment, it follows that $h$ is Lipschitz on $[x,y]$. Hence, $g$ is Lipschitz on $[0,1]$, and therefore absolutely continuous and differentiable almost everywhere on $(0,1)$ \cite[Theorem 7.20]{Rudin-1987}. Since $x_\lambda$ is affine with derivative $x_\lambda' = y-x$, Clarke's chain rule \cite[Theorem~2.3.10]{Clarke1990} yields
\[
\partial^C g(\lambda) \subseteq \{ \langle \eta, y-x \rangle : \eta \in \partial^C h(x_\lambda) \}.
\]
At every point $\lambda$ where $g$ is differentiable, we have $\partial^C g(\lambda)=\{g'(\lambda)\}$. Therefore, for almost every $\lambda \in (0,1)$, there exists $\xi_\lambda \in \partial^C h(x_\lambda)$ such that
$g'(\lambda) = \langle \xi_\lambda, y-x \rangle$.
%
%
\end{proof}

When $h$ is locally Lipschitz, strong quasar-convexity admits the following characterization in terms of the Clarke subdifferential. This result extends \cite[Lemma~10]{Hinder2020Near} from the differentiable setting to locally Lipschitz functions.

\begin{theorem}[{\bf Clarke first-order characterization}]
\label{thm:char:clarke}
 Let $h: \mathbb{R}^{n} \to \overline{\mathbb{R}}$ be a proper function with convex domain, and let $\overline{x} \in \argmin{\mathbb R^n} h$. Assume that $h$ is locally Lipschitz on $\dom{h}$. Then $h$ is $(\kappa, \gamma)$-strongly quasar-convex with respect to $\overline{x}$, where $\gamma \ge 0$, if and only if
\begin{equation}\label{eq:foC}
 h(\overline x) \ge h(x) + \frac{1}{\kappa} \langle v, \overline x-x\rangle + \frac{\gamma}{2} \|x-\overline x\|^2, \qquad \forall ~ x \in \dom{h}, ~  \forall ~ v \in \partial^C h(x).
\end{equation}
\end{theorem}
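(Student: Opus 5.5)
The plan is to prove the two implications separately. Necessity comes from a limsup argument applied to the definition at points near $x$. Sufficiency comes from integrating a differential inequality along the segment $[x,\overline x]$, using Lemma~\ref{lem:clarke-chain}. Throughout, write $d:=\overline x-x$, $D:=\|x-\overline x\|$, and $c:=\frac{\kappa\gamma}{2(2-\kappa)}$.

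\emph{Necessity.} Assume $h$ is $(\kappa,\gamma)$-strongly quasar-convex and fix $x\in\dom h$. By \eqref{prop:direc}, every $v\in\partial^C h(x)$ satisfies $\langle v,d\rangle\le h^\circ(x;d)$. It therefore suffices to show
\[
h^\circ(x;d)\le \kappa\bigl(h(\overline x)-h(x)\bigr)-\frac{\kappa\gamma}{2}\|x-\overline x\|^2 .
\]
The difficulty is that $h^\circ$ involves base points $y\neq x$, while the definition controls $h$ only along rays toward $\overline x$. The fix is as follows.
\begin{itemize}
\item For $y\in\dom h$ near $x$ and small $t>0$, write $y+td=y+t(\overline x-y)+t(y-x)$.
\item Use the local Lipschitz constant $L$ of $h$ near $x$, together with the definition applied at $y$ with $\lambda=t$. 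This gives
\[
\frac{h(y+td)-h(y)}{t}\le \kappa\bigl(h(\overline x)-h(y)\bigr)-\Bigl(1-\frac{t}{2-\kappa}\Bigr)\frac{\kappa\gamma}{2}\|y-\overline x\|^2+L\|y-x\|.
\]
\item Take the limsup as $y\to x$ and $t\downarrow 0$. By continuity of $h$, the right-hand side tends to the desired bound.
\end{itemize}
Some care is needed near the boundary of $\dom h$: the Clarke limsup should only involve $y\in\dom h$, and convexity of the domain keeps $y+t(\overline x-y)$ inside the domain.

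\emph{Sufficiency.} Assume \eqref{eq:foC} holds. Fix $x\in\dom h$, set $x_\lambda:=x+\lambda d$, $g(\lambda):=h(x_\lambda)$ and $u(\lambda):=g(\lambda)-h(\overline x)\ge 0$.
\begin{itemize}
\item By Lemma~\ref{lem:clarke-chain}, for a.e.\ $\lambda\in(0,1)$ we have $u'(\lambda)=\langle\xi_\lambda,d\rangle=\frac{1}{1-\lambda}\langle \xi_\lambda,\overline x-x_\lambda\rangle$ with $\xi_\lambda\in\partial^C h(x_\lambda)$.
\item Applying \eqref{eq:foC} at $x_\lambda$ and using $\|x_\lambda-\overline x\|=(1-\lambda)D$ gives
\[
u'(\lambda)\le -\frac{\kappa}{1-\lambda}u(\lambda)-\frac{\kappa\gamma}{2}(1-\lambda)D^2 .
\]
\item Multiply by the integrating factor $(1-\lambda)^{-\kappa}$; the product is absolutely continuous on $[0,\lambda]$ for $\lambda<1$. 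Integrating from $0$ to $\lambda$ then yields
\[
u(\lambda)\le (1-\lambda)^\kappa\bigl[u(0)-cD^2\bigr]+c(1-\lambda)^2D^2 .
\]
\end{itemize}

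\emph{Main obstacle: closing the sufficiency argument.} Substituting $(1-\lambda)^\kappa\le 1-\kappa\lambda$ naively into the whole bound goes the wrong way. The bound must be used only after checking that the bracket is nonnegative.
\begin{itemize}
\item Use $u(\lambda)\ge 0$ in the integrated form $u(\lambda)(1-\lambda)^{-\kappa}\le u(0)-c\bigl(1-(1-\lambda)^{2-\kappa}\bigr)D^2$ and let $\lambda\uparrow 1$. This gives $u(0)\ge cD^2$, which is the quadratic growth \eqref{qwc:sconvex}.
\item With the bracket $u(0)-cD^2$ nonnegative, concavity gives $(1-\lambda)^\kappa\le 1-\kappa\lambda$, so
\[
u(\lambda)\le(1-\kappa\lambda)u(0)-cD^2\bigl[(1-\kappa\lambda)-(1-\lambda)^2\bigr].
\]
\item The identity $(1-\kappa\lambda)-(1-\lambda)^2=(2-\kappa)\lambda\bigl(1-\frac{\lambda}{2-\kappa}\bigr)$ turns this into exactly \eqref{eq:def:quasar}.
\item The endpoint $\lambda=1$ of \eqref{eq:def:quasar} holds trivially.
\end{itemize}
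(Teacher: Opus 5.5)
Your proposal is correct and follows essentially the same route as the paper. For necessity, you split $y+td=y+t(\overline{x}-y)+t(y-x)$, apply the local Lipschitz bound and take a limsup in the Clarke directional derivative. For sufficiency, you use Lemma~\ref{lem:clarke-chain}, the integrating factor $(1-\lambda)^{-\kappa}$, quadratic growth $u(0)\ge cD^2$ extracted from $u\ge 0$, and then Bernoulli's inequality with the identity $(1-\kappa\lambda)-(1-\lambda)^2=(2-\kappa)\lambda\bigl(1-\frac{\lambda}{2-\kappa}\bigr)$.

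The only cosmetic difference is how you get the growth bound: you let $\lambda\uparrow 1$ in the integral over $[0,\lambda]$, while the paper integrates over $[\lambda,r]$ and lets $r\uparrow 1$. One small point: the endpoint $\lambda=1$ is not literally trivial when $\kappa<1$, but it follows from the growth bound you have already established.
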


\begin{proof}
See Appendix~\ref{sec:app:a0}.
\end{proof}

As an immediate consequence of Theorem~\ref{thm:char:clarke} (see relation \eqref{eq:g0-lower} in Appendix~\ref{sec:app:a0}), we obtain the following quadratic growth property; see also \cite[Corollary~1]{Hinder2020Near}.

\begin{corollary}[\textbf{Quadratic growth from the Clarke condition}]\label{clarke:qwc}
 Let $h: \mathbb{R}^{n} \to \overline{\mathbb{R}}$ be a proper function with convex domain, let $\overline{x} \in \argmin{\mathbb R^n}\,h$, $\kappa \in (0, 1]$ and $\gamma \geq 0$. Assume that $h$ is locally Lipschitz on $\dom{h}$. If relation \eqref{eq:foC} holds, then
\begin{equation}\label{qwc:revisited}
 h(x) \geq h(\overline{x}) + \frac{\kappa \gamma}{2(2-\kappa)} \|x-\overline x\|^2, \quad \forall~  x \in \R^{n}.
\end{equation}
That is, $h$ satisfies a quadratic growth condition with modulus $\frac{\kappa \gamma}{2(2 - \kappa)} \geq 0$. 
\end{corollary}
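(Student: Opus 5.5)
Fix $x\in\dom h$ with $x\neq\overline{x}$; for $x=\overline{x}$ the claim is trivial. For $x\notin\dom h$ the left side is $+\infty$, so nothing needs proving there either. Throughout, set $\delta:=h(x)-h(\overline{x})\ge 0$ and $r:=\|x-\overline x\|>0$. The plan is to integrate the Clarke inequality \eqref{eq:foC} along the segment from $x$ to $\overline{x}$, using Lemma~\ref{lem:clarke-chain}. This yields a linear differential inequality, which we then integrate.

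\textbf{Step 1 (differential inequality).} Parametrize the segment by $z_\lambda:=\lambda\overline{x}+(1-\lambda)x$ for $\lambda\in[0,1]$, and set $g(\lambda):=h(z_\lambda)-h(\overline x)\ge 0$. By Lemma~\ref{lem:clarke-chain} (with endpoints $x$ and $y=\overline x$), $g$ is absolutely continuous. Moreover, for a.e.\ $\lambda$ there is $\xi_\lambda\in\partial^C h(z_\lambda)$ with $g'(\lambda)=\langle \xi_\lambda,\overline x-x\rangle$.

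Since $\overline x-z_\lambda=(1-\lambda)(\overline x-x)$, applying \eqref{eq:foC} at $z_\lambda$ with $v=\xi_\lambda$ gives, for $\lambda<1$,
\[
0\ge g(\lambda)+\frac{1-\lambda}{\kappa}g'(\lambda)+\frac{\gamma}{2}(1-\lambda)^2r^2 .
\]
Equivalently,
\[
g'(\lambda)\le -\frac{\kappa}{1-\lambda}g(\lambda)-\frac{\kappa\gamma}{2}(1-\lambda)r^2 \quad\text{for a.e. }\lambda\in(0,1).
\]

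\textbf{Step 2 (integrating factor).} Multiply by $(1-\lambda)^{-\kappa}$. This turns the left side into the derivative of $(1-\lambda)^{-\kappa}g(\lambda)$, which is absolutely continuous on $[0,\Lambda]$ for each $\Lambda<1$. Integrating from $0$ to $\Lambda$ gives
\[
(1-\Lambda)^{-\kappa}g(\Lambda)\le \delta-\frac{\kappa\gamma r^2}{2}\int_0^\Lambda(1-s)^{1-\kappa}\,ds=\delta-\frac{\kappa\gamma r^2}{2(2-\kappa)}\bigl(1-(1-\Lambda)^{2-\kappa}\bigr).
\]
Since $g(\Lambda)\ge0$, the left side is nonnegative. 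Letting $\Lambda\uparrow 1$ then yields $\delta\ge \frac{\kappa\gamma}{2(2-\kappa)}r^2$, which is \eqref{qwc:revisited}.

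\textbf{Main obstacle.} The only delicate point is the behavior near $\lambda=1$, where the factor $(1-\lambda)^{-1}$ blows up. This is why I integrate on $[0,\Lambda]$ with $\Lambda<1$ and pass to the limit, rather than working on all of $[0,1]$. The limit is harmless because we only use the sign $g\ge0$ and never any bound on $g$ near $\overline x$.

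A secondary check is that \eqref{eq:foC} is applied at points $z_\lambda$ of the domain. This holds because $\dom h$ is convex and contains both $x$ and $\overline x$. It is also the relation one would record as \eqref{eq:g0-lower}.
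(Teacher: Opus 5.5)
Your proposal is correct and is essentially the paper's argument, namely the converse half of the proof of Theorem~\ref{thm:char:clarke} in Appendix~\ref{sec:app:a0}: you use the same differential inequality obtained via Lemma~\ref{lem:clarke-chain}, the same integrating factor $(1-\lambda)^{-\kappa}$, and the same use of $g\ge 0$ with a limit toward $\lambda=1$ to reach \eqref{eq:g0-lower}. The only cosmetic difference is the integration interval: the paper integrates over $[\lambda,r]$ and lets $r\uparrow 1$, which gives $g(\lambda)\ge \frac{\kappa\gamma}{2(2-\kappa)}(1-\lambda)^2\|x-\overline{x}\|^2$ for every $\lambda$ before setting $\lambda=0$, whereas you integrate over $[0,\Lambda]$ directly.
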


The following result shows that strongly quasar-convex functions do not admit spurious Clarke critical points. In particular, every Clarke critical point is a global minimizer.

\begin{proposition}[\textbf{Clarke critical points are global minimizers}]\label{crit:are:glob}
 Let $h:\mathbb{R}^n \to \overline{\mathbb{R}}$ be a proper function with convex domain, and let $\overline{x} \in \argmin{\mathbb{R}^n} h$. Assume that $h$ is $(\kappa,\gamma)$-strongly quasar-convex with respect to $\overline{x}$, where $\gamma \ge 0$, and that $h$ is locally Lipschitz on $\dom h$. If $0 \in \partial^C h(x)$, then $x \in \argmin{\mathbb{R}^{n}} h$.
\end{proposition}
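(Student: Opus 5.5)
The plan is to use the Clarke first-order characterization of Theorem~\ref{thm:char:clarke} with the particular subgradient $v=0$. Assume $0\in\partial^C h(x)$. Since $\partial^C h(x)=\emptyset$ for $x\notin\dom h$ by convention, this forces $x\in\dom h$. Hence $x$ is an admissible point in \eqref{eq:foC}.

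Because $h$ is $(\kappa,\gamma)$-strongly quasar-convex with respect to $\overline{x}$ and locally Lipschitz on $\dom h$, Theorem~\ref{thm:char:clarke} gives \eqref{eq:foC} for every $x\in\dom h$ and every $v\in\partial^C h(x)$. Taking $v=0$, the inner-product term disappears and we obtain
\[
h(\overline{x})\ \ge\ h(x)+\frac{\gamma}{2}\|x-\overline{x}\|^2\ \ge\ h(x).
\]
On the other hand, $\overline{x}\in\argmin{\mathbb{R}^n} h$ gives $h(x)\ge h(\overline{x})$. Together these yield $h(x)=h(\overline{x})=\inf h$, so $x\in\argmin{\mathbb{R}^n} h$. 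When $\gamma>0$, the same chain also forces $\|x-\overline{x}\|=0$, i.e.\ $x=\overline{x}$. This is consistent with Remark~\ref{rem:on_beg_prop}~\ref{rem:on_beg_prop:b}, although the claim does not need it.

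There is essentially no obstacle once Theorem~\ref{thm:char:clarke} is available; the substantive work (the absolutely continuous integration along the segment via Lemma~\ref{lem:clarke-chain}) is already contained in that theorem. If one preferred to avoid it, an alternative route works directly from Definition~\ref{def:quasar}. It would evaluate the Clarke directional derivative in the direction $d=\overline{x}-x$, using \eqref{prop:direc} and $h^\circ(x;d)\ge\limsup_{t\downarrow0}\frac{h(x+td)-h(x)}{t}$. The quasar inequality bounds that difference quotient above by $-\kappa(h(x)-h(\overline{x}))-\frac{\kappa\gamma}{2}\|x-\overline{x}\|^2$, and that is the wrong direction of inequality. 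So this route would instead go by contradiction through Proposition~\ref{maxminlocal}, which is more delicate. For that reason I would commit to the first-order characterization argument above.
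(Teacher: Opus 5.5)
Your argument is correct and is exactly the paper's proof: you invoke Theorem~\ref{thm:char:clarke}, take $v=0$ in \eqref{eq:foC} to get $h(\overline{x})\ge h(x)+\frac{\gamma}{2}\|x-\overline{x}\|^2\ge h(x)$, and combine this with the minimality of $\overline{x}$. One caveat on your closing aside: the direct route is not actually blocked, because the forward half of Theorem~\ref{thm:char:clarke} is precisely that route. It bounds $h^\circ(x;\overline{x}-x)$ from above by applying the quasar inequality at nearby base points $y$ with a Lipschitz correction. By contrast, a detour through Proposition~\ref{maxminlocal} would not apply directly, since a Clarke critical point is not a priori a local extremum.
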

\begin{proof}
Invoking Theorem~\ref{thm:char:clarke}, condition \eqref{eq:foC} holds. Taking $v=0 \in \partial^C h(x)$ in \eqref{eq:foC}, we obtain
\[
 h(\overline{x}) \ge h(x) + \frac{\gamma}{2} \|x - \overline{x}\|^2.
\]
In particular, $h(\overline{x}) \ge h(x)$. Since 
$\overline{x} \in \argmin{\mathbb{R}^{n}} h$, it is clear that
$h(\overline{x}) \le h(x)$, i.e., $h(x) = h(\overline{x})$, and therefore $x \in \argmin{\mathbb{R}^{n}} h$. 
\end{proof}
For the next result, recall that a one-dimensional function is called unimodal if it decreases monotonically to its minimizer and then increases monotonically thereafter.
The following result extends the necessary condition in \cite[Observation 1]{Hinder2020Near} from the differentiable to the nonsmooth setting.

\begin{proposition}[\textbf{Unimodality on an interval}]\label{unimodal}
 Let $f: [a,b] \to \overline{\mathbb R}$ be a proper and locally Lipschitz function, and let  $\overline{x} \in \argmin{[a,b]} f$. If $f$ is $\kappa$-quasar-convex with respect to $\overline{x}$, with $\kappa \in (0,1]$, then $f$ is unimodal and every Clarke critical point is a global minimizer.
\end{proposition}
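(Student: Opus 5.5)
The plan has two parts: prove that $f$ is monotone on each side of $\overline{x}$, and then obtain the critical point claim from Proposition~\ref{crit:are:glob}. Throughout, write $f^* := f(\overline{x})$. For unimodality we show that $f$ is nonincreasing on $[a,\overline{x}]$ and nondecreasing on $[\overline{x},b]$. By symmetry (reflect via $t\mapsto -t$, which preserves quasar-convexity with respect to the reflected minimizer, in the spirit of Proposition~\ref{Scaling}~\ref{Scaling:a2}), it is enough to treat $[\overline{x},b]$. So take $\overline{x}\le s<t\le b$; we must prove $f(s)\le f(t)$. Write $s=\lambda\overline{x}+(1-\lambda)t$ with $\lambda=\frac{t-s}{t-\overline{x}}\in(0,1]$. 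Applying $\kappa$-quasar-convexity at the point $t$ gives
\[
f(s)\le \kappa\lambda f^*+(1-\kappa\lambda)f(t)=f(t)-\kappa\lambda\bigl(f(t)-f^*\bigr)\le f(t),
\]
because $f(t)\ge f^*$. This is just the argument in the proof of Proposition~\ref{maxminlocal}~\ref{maxminlocal:a}, carried out along the segment. It also gives strict monotonicity as long as $f(t)>f^*$.

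This already yields unimodality: the argmin set is an interval containing $\overline{x}$ (it is star-convex by Remark~\ref{rem:on_beg_prop}~\ref{rem:on_beg_prop:a}, and in one dimension star-convex sets are intervals), $f$ decreases toward it, and increases after it. One caveat is that $f$ is extended-valued. Quasar-convexity is only required on $\dom f$, which is a convex subinterval containing $\overline{x}$. Outside $\dom f$ the value is $+\infty$, so monotonicity continues to hold trivially as long as $\dom f$ is an interval around $\overline{x}$. I would state this case briefly.

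For the claim about critical points, apply Proposition~\ref{crit:are:glob} with $n=1$ and $\gamma=0$. Its hypotheses are a convex domain, local Lipschitz continuity on the domain, and $\kappa$-quasar-convexity, and all of these hold. One point needs care: Theorem~\ref{thm:char:clarke} and Proposition~\ref{crit:are:glob} are stated for functions on $\R^n$, whereas here $f$ lives on $[a,b]$. I would handle this by extending $f$ with the value $+\infty$ outside $[a,b]$. The domain stays convex, and the Clarke subdifferential at interior points does not change. At the endpoints $a$ and $b$, local Lipschitz continuity on the domain is what Lemma~\ref{lem:clarke-chain} actually uses, and that holds.

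As a sanity check on the endpoints, I would also give the elementary argument. Suppose $0\in\partial^C f(x)$ but $f(x)>f^*$, say with $x>\overline{x}$. The quasar inequality gives, for small $\lambda$,
\[
\frac{f(x-\lambda(x-\overline{x}))-f(x)}{\lambda}\le -\kappa\bigl(f(x)-f^*\bigr)<0 .
\]
Hence $f^\circ(x;-1)\ge \kappa\,\frac{f(x)-f^*}{x-\overline{x}}>0$ is not the obstruction; we want the opposite direction instead. Rather than chase that inequality, I would rely on \eqref{eq:foC} with $v=0$, as in Proposition~\ref{crit:are:glob}.

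The main obstacle I expect is therefore not the monotonicity, which is routine, but checking that the Clarke machinery still applies at boundary points of $[a,b]$ and for extended values. That is exactly the extension-to-$\R$ remark above.
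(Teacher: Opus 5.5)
Your proof is the paper's: you get monotonicity on each side of $\overline{x}$ by writing the point nearer to $\overline{x}$ as a convex combination of $\overline{x}$ and the farther point and using $f(\overline{x})\le f(\cdot)$ in the quasar-convex inequality (the paper treats both sides directly instead of reflecting), and the critical-point claim is Proposition~\ref{crit:are:glob} with $\gamma=0$. You should drop the abandoned ``sanity check'' paragraph (its bound is really on $f^\circ(x;+1)$, not $f^\circ(x;-1)$, and the useful estimate would be an upper bound on $f^\circ(x;\overline{x}-x)$), and for the endpoints note that Proposition~\ref{crit:are:glob} uses only the forward direction of Theorem~\ref{thm:char:clarke}, whose proof needs a Lipschitz bound on a neighborhood of $x$ (Lemma~\ref{lem:clarke-chain} enters only the converse), so that is the step needing care at $a$ and $b$---a point the paper itself passes over.
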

\begin{proof}
Let $a\leq x_1 < x_2 \leq \overline{x}$ and define
$\lambda := \frac{x_2 - x_1}{\overline{x} - x_1} \in [0,1]$.
Then $x_2 = \lambda \overline{x} + (1-\lambda)x_1$. Using the $\kappa$-quasar-convexity of $f$ and  $f(\overline{x})\le f(x_1)$, it follow that
\[
f(x_2) \leq \kappa \lambda f(\overline{x}) + (1-\kappa \lambda) f(x_1) \leq f(x_1),
\]
i.e., $f$ is nonincreasing on $[a,\overline{x}]$.

Similarly, let $\ov{x} \le x_1 < x_2 \le b$ and define $\lambda = \frac{x_2 - x_1}{x_2 - \overline{x}} \in [0,1]$.
Then $x_1 = \lambda \overline{x} + (1-\lambda)x_2$. Together with the $\kappa$-quasar-convexity of $f$,  $f(\overline{x})\le f(x_2)$ ensures
$$f(x_1) \leq \kappa \lambda f(\overline{x}) + (1-\kappa \lambda) f(x_2) \leq f(x_2),$$
which shows that $f$ is nondecreasing on $[\overline{x},b]$. Therefore, $f$ is unimodal.
The statement regarding the Clarke critical points follows from Proposition~\ref{crit:are:glob} applied with $\gamma=0$.
\end{proof}

The converse of Proposition~\ref{unimodal} fails in general, even for $C^k$ functions with $k\ge 1$, as shown in the following example. This also shows that the converse statement suggested by \cite[Observation~1]{Hinder2020Near} does not hold in general.

\begin{example}[\textbf{A smooth unimodal function that is not quasar-convex}] \label{ex:Ck-counterexample}
 Fix $k \in \mathbb N$ and define the function $f: [-1, 1] \to \mathbb R$ by
$$
 f(x) := \int_0^x \phi(t)\,dt,
$$
where
$$
\phi(t) := 
\begin{cases}
 t^{2k-1} \sin^2(1/t)+t^{2k+1}, & t\neq 0,\\
 0, & t=0.
\end{cases}
$$
Clearly, $\phi \in C^{k-1}([-1,1])$, and therefore $f \in C^{k} ([-1,1])$. Moreover, $f'(0)=\phi(0)=0$, and for $x\neq 0$,
\[
f'(x) = x^{2k-1} \sin^2(1/x) + x^{2k+1}.
\]
Therefore, $f'(x) < 0$ on $(-1, 0)$ and $f'(x) > 0$ on $(0, 1)$,
i.e., $f$ is unimodal. In particular, $x=0$ is its unique critical point and global minimizer. 

We claim that $f$ is not $\kappa$-quasar-convex at $x=0$ for any $\kappa\in(0,1]$. Suppose, by contradiction, that $f$ is $\kappa$-quasar-convex with respect to $x=0$ for some $\kappa\in(0,1]$. Since $f$ is differentiable, applying \eqref{diff:squasar} for $\gamma=0$ implies
$$ f(0) \ge f(x)+\frac{1}{\kappa} f'(x)(0-x), \qquad~  \forall ~ x \in[-1,1].
$$
Since $f(0)=0$ and $f(x)>0$ for every $x\in(0,1]$, we obtain
\begin{equation}\label{for:kappa}
 0 < \kappa \le \frac{x f'(x)}{f(x)},\qquad ~ \forall ~ x \in (0,1].
\end{equation} 
Setting $x_n:=1/(n\pi)$ and $\sin(1/x_n)=0$, we come to $x_n f'(x_n)=x_n^{2k+2}$.
On the other hand,
\begin{align*}
 f(x_n) \ge \int_{x_n/2}^{x_n} t^{2k-1} \sin^{2}  (1/t)\,dt & = \int_{n \pi}^{2n \pi} \frac{\sin^{2} (u)}{u^{2k+1}}\,du \ge \frac{1}{(2n\pi)^{2k+1}} \int_{n \pi}^{2n \pi} \sin^2 (u) \, du = \frac{x_n^{2k}}{2^{2k+2}}.
\end{align*}
Substituting this into \eqref{for:kappa}, it can be concluded that
\[
\frac{x_n f'(x_n)}{f(x_n)}\le 2^{2k+2}x_n^2 \to 0,
\]
which contradicts $\kappa>0$.
Therefore, $f$ is not $\kappa$-quasar-convex with respect to $x=0$ for any $\kappa\in(0,1]$.
\end{example}

We recall the (nonsmooth) Polyak--{\L}ojasiewicz (PL) property (see, e.g., \cite{bolte2007lojasiewicz}), which plays a central role in first-order convergence analysis. A proper function $h:\mathbb{R}^n \to \overline{\mathbb{R}}$ with $\overline{x} \in \argmin{\mathbb{R}^n} h$ is said to satisfy the PL property with modulus $\mu>0$ if
\begin{equation}\label{PL:property}
 \frac{1}{2} \dist^{2} (0,\partial^C h(x)) \; \ge \mu (h(x) - h(\overline{x})), \qquad \forall ~ x \in  
 \dom{h}.
\end{equation}
In the next two results, we assume that the curvature parameter $\gamma$ is strictly positive.

\begin{proposition}[{\bf PL inequality}]\label{prop:nonsmooth-PL}
 Let $h: \mathbb R^n \to \overline{\mathbb{R}}$ be a proper function with convex domain, and let $\overline{x} \in \argmin{\mathbb{R}^{n}} h$. Assume that $h$ is $(\kappa, \gamma)$-strongly quasar-convex  with respect to $\overline{x}$, where $\kappa\in(0,1]$ and $\gamma>0$, and that $h$ is locally Lipschitz on $\dom{h}$. Then $h$ satisfies the PL property with modulus $\mu := \gamma\kappa^2 > 0$.
\end{proposition}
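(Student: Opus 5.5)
The plan is to derive the PL inequality directly from the Clarke first-order characterization in Theorem~\ref{thm:char:clarke}. We may assume $\partial^C h(x)\neq\emptyset$, which holds for a locally Lipschitz function at points of the domain; if the subdifferential were empty, the distance would be $+\infty$ and the inequality \eqref{PL:property} would be trivial. Fix $x\in\dom h$ and an arbitrary $v\in\partial^C h(x)$. Inequality \eqref{eq:foC} rearranges to
\[
h(x)-h(\ov x)\le \frac{1}{\kappa}\langle v, x-\ov x\rangle-\frac{\gamma}{2}\|x-\ov x\|^2 .
\]
The idea is to absorb the linear term into the negative quadratic by a Young-type inequality.

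Concretely, we use Cauchy--Schwarz and then maximize over $t=\|x-\ov x\|\ge 0$:
\[
\frac{1}{\kappa}\|v\|\,t-\frac{\gamma}{2}t^2\le \sup_{t\ge 0}\Bigl(\frac{\|v\|}{\kappa}t-\frac{\gamma}{2}t^2\Bigr)=\frac{\|v\|^2}{2\gamma\kappa^2}.
\]
This uses $\gamma>0$, which makes the concave quadratic bounded above, with maximizer $t=\|v\|/(\gamma\kappa)$. It follows that
\[
\frac{1}{2}\|v\|^2\ge \gamma\kappa^2\bigl(h(x)-h(\ov x)\bigr)
\]
for every $v\in\partial^C h(x)$.

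Finally, we take the infimum over $v\in\partial^C h(x)$. The Clarke subdifferential is nonempty, convex and compact for locally Lipschitz $h$, so the infimum of $\|v\|^2$ equals $\dist^2(0,\partial^C h(x))$ and is attained. This gives \eqref{PL:property} with $\mu=\gamma\kappa^2$.

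There is no real obstacle here. The only point needing care is whether $\partial^C h(x)$ is nonempty at boundary points of $\dom h$. Interior points are covered by local Lipschitz continuity. At other points, emptiness makes the left-hand side $+\infty$ and the inequality trivial.
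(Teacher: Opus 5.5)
Your proof is correct and follows essentially the same route as the paper: rearrange the Clarke characterization \eqref{eq:foC}, absorb the linear term into the negative quadratic (the paper uses Young's inequality, which is equivalent to your Cauchy--Schwarz bound followed by maximizing over $t=\|x-\ov x\|$), and then take the infimum over $v\in\partial^C h(x)$. Your remark on the empty-subdifferential case is a small refinement that the paper leaves implicit.
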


\begin{proof}
 Fix $x\in \dom{h}$ and let $v\in\partial^{C}h(x)$. It follows from Theorem \ref{thm:char:clarke} that
 $$ h(x) - h(\overline{x}) \le \frac{1}{\kappa} \langle v, x - \overline{x} \rangle - \frac{\gamma}{2} \|x - \overline{x}\|^2. $$
Applying Young's inequality,
$\frac{1}{\kappa} \langle v, x - \overline{x} \rangle \le \frac{1}{2\gamma \kappa^2} \|v\|^2 + \frac{\gamma}{2} \|x-\overline{x} \|^2$, leads to
 
 $$ h(x) - h(\overline{x}) \leq \frac{1}{2 \gamma \kappa^2} \|v\|^2.$$
 Since this inequality holds for every $v \in \partial^C h(x)$, taking infimum over $v\in\partial^C h(x)$ yields
 \[
 h(x)-h(\overline{x}) \le \frac{1}{2 \gamma \kappa^2} \dist^{2} (0,\partial^C h(x)), 
 \]
 giving our desired result.
\end{proof}

Next, we show that strongly quasar-convex functions satisfy both value-based and subgradient-based error bounds. In particular, the distance to the solution set can be controlled either by function values or by the magnitude of the Clarke subdifferential.
\begin{proposition}[{\bf Quadratic growth and error bound}]\label{prop:error-bounds}
Let $h:\mathbb{R}^n \to \overline{\mathbb{R}}$ be a proper function and let $\overline x \in \argmin{\mathbb{R}^n} h$. Assume that $h$ is $(\kappa, \gamma)$-strongly quasar-convex  with respect to $\overline{x}$, where $\kappa \in (0,1]$ and $\gamma>0$, and that $h$ is locally Lipschitz on $\dom h$. Then, for every $x \in \dom{h}$, it holds that
\begin{equation} \label{eq:eb-value-ext}
 \|x-\overline x\| \le \sqrt{\frac{2(2-\kappa)}{\kappa \gamma}} \, \sqrt{ h(x) - h(\overline x)},
\end{equation}
and
\begin{equation}
\label{eq:eb-subgrad-ext}
 \|x-\overline x\| \le \frac{2}{\kappa \gamma}\,
 \dist \big(0, \partial^C h(x)\big).
\end{equation}
\end{proposition}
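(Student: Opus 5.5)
The first bound is simply the quadratic growth property rearranged. By Remark~\ref{rem:on_beg_prop}~\ref{rem:on_beg_prop:d} (equivalently Corollary~\ref{clarke:qwc}, since local Lipschitzness lets us use Theorem~\ref{thm:char:clarke}), every $x\in\dom h$ satisfies
\[
h(x)-h(\ov x)\ \ge\ \frac{\kappa\gamma}{2(2-\kappa)}\|x-\ov x\|^2 .
\]
Because $\gamma>0$ and $\kappa\in(0,1]$, the coefficient is positive. Dividing by it and taking square roots gives \eqref{eq:eb-value-ext} directly; the right-hand side is well defined since $h(x)\ge h(\ov x)$.

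For the subgradient bound \eqref{eq:eb-subgrad-ext}, we use the Clarke first-order characterization, Theorem~\ref{thm:char:clarke}. Fix $x\in\dom h$. If $\partial^C h(x)=\emptyset$, the distance is $+\infty$ and there is nothing to prove. Otherwise, take any $v\in\partial^C h(x)$. Inequality \eqref{eq:foC} rearranges to
\[
\frac{1}{\kappa}\langle v,x-\ov x\rangle\ \ge\ h(x)-h(\ov x)+\frac{\gamma}{2}\|x-\ov x\|^2 .
\]
Since $h(x)-h(\ov x)\ge 0$, the right-hand side is at least $\frac{\gamma}{2}\|x-\ov x\|^2$. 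Cauchy--Schwarz bounds the left-hand side by $\frac1\kappa\|v\|\,\|x-\ov x\|$. Combining these gives
\[
\frac{\gamma}{2}\|x-\ov x\|^2\le \frac{1}{\kappa}\|v\|\,\|x-\ov x\| .
\]
If $x=\ov x$ the claim is trivial. Otherwise we divide by $\|x-\ov x\|$ to obtain $\|x-\ov x\|\le \frac{2}{\kappa\gamma}\|v\|$. Taking the infimum over $v\in\partial^C h(x)$ yields \eqref{eq:eb-subgrad-ext}.

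We could sharpen the constant by keeping the function-value term and using the growth bound $h(x)-h(\ov x)\ge\frac{\kappa\gamma}{2(2-\kappa)}\|x-\ov x\|^2$. The statement only asks for $\frac{2}{\kappa\gamma}$, so discarding that term is enough.

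There is no genuine obstacle here. The only care needed is to state that Theorem~\ref{thm:char:clarke} requires a convex domain, which is built into Definition~\ref{def:quasar}, together with local Lipschitz continuity on $\dom h$, which is assumed. The degenerate cases $x=\ov x$ and an empty subdifferential are handled separately, as above.
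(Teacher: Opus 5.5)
Your proof is correct and follows the paper's argument essentially line by line. The value bound is the quadratic growth estimate \eqref{qwc:revisited} rearranged. The subgradient bound comes from Theorem~\ref{thm:char:clarke}: discard $h(x)-h(\ov x)\ge 0$, apply Cauchy--Schwarz, divide by $\|x-\ov x\|$, and take the infimum over $v\in\partial^C h(x)$.
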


\begin{proof}
It follows from the quadratic growth estimate \eqref{qwc:revisited} that
 $$h(x) - h(\overline x) \ge \frac{\kappa \gamma}{2(2-\kappa)} \|x-\overline x\|^2, \qquad \forall ~ x \in 
\dom{h},$$
which is equivalent to \eqref{eq:eb-value-ext}.
Now, fix $x \in \dom{h}$ and let $v \in \partial^C h(x)$. Invoking Theorem~\ref{thm:char:clarke} ensures
$$ h(\overline x) \ge h(x) + \frac{1}{\kappa} \langle v, \overline x-x \rangle + \frac{\gamma}{2} \|x-\overline x\|^2.$$
Since $\overline{x} \in \argmin{}\,h$, we have $h(\overline x) \le h(x)$, hence using the Cauchy--Schwarz inequality,
\begin{align*}
 \frac{\gamma}{2} \|x-\overline{x} \|^2 \le - \frac{1}{\kappa} \langle v,\overline{x} - x \rangle \le \frac{1}{\kappa} \|v\| \, \|x-\overline{x} \|.
\end{align*}
If $x \neq \overline{x}$, dividing by $\|x-\overline{x}\|$ yields
\[
\|x - \overline{x}\| \le \frac{2}{\kappa\gamma} \|v\|.
\]
The inequality is trivial if $x=\overline{x}$. Taking the infimum for $v \in \partial^C h(x)$ leads to \eqref{eq:eb-subgrad-ext}.
\end{proof}

\subsection{Landscape of (strongly) quasar-convex functions}\label{subsec:no-saddle-dini}
We next refine the geometric picture of quasar-convex functions by focusing on their stationary structure and the shape of their minimizer sets. This viewpoint is useful for two reasons. First, it does not require local Lipschitz continuity and therefore applies beyond the Clarke-subdifferential framework (see Remark~\ref{rem:no_saddle} below). Second, it further clarifies the benign landscape of quasar-convex functions: Proposition~\ref{maxminlocal} already shows that every local minimizer is global and that strict local maximizers do not exist, while the result below excludes saddle points defined through Dini-stationarity.

Another important distinction concerns the geometry of the minimizer set itself. If $h$ is $(\kappa,\gamma)$-strongly quasar-convex with $\gamma>0$, then $\argmin{\R^n} h$ is a singleton by Remark~\ref{rem:on_beg_prop}~\ref{rem:on_beg_prop:b}.
Hence, this class does not admit nonisolated global minimizers. In contrast, in the quasar-convex case $(\gamma=0)$, the set of global minimizers may be nonisolated; if it is not singleton, then it is star-convex by Remark~\ref{rem:on_beg_prop}~\ref{rem:on_beg_prop:a}. Thus, even in the absence of strong quasar-convexity, quasar-convex functions still
enjoy a highly structured and benign landscape.

We first recall the lower Dini directional derivative. For a proper function
$h:\R^n\to\Rinf$, a point $x\in \dom{h}$, and a direction
$d\in\R^n$, define
\[
h^{D-}(x;d):=\liminft{t\downarrow 0}\frac{h(x+td)-h(x)}{t}.
\]
\begin{definition}[\textbf{Stationary point in the Dini sense}]
A point $x\in\dom h$ is called a \textit{stationary point} of $h$ if
\[
h^{D-}(x;d)\ge 0 \qquad \forall d\in\mathbb{R}^n.
\]
The set of all such points is denoted by  $\bs{\rm Dcrit}(h)$.
\end{definition}
The next result shows that quasar-convexity rules out such saddle points.
\begin{theorem}[\textbf{No Dini saddle points}]\label{prop:no_saddle}
Let $h:\R^n\to\Rinf$ be proper, and let $\ov{x}\in \argmin{\mathbb{R}^n} h$.
\begin{enumerate}[label=(\textbf{\alph*}), font=\normalfont\bfseries, leftmargin=0.7cm]
\item \label{prop:no_saddle:a} If $h$ is $\kappa$-quasar-convex
with respect to $\ov{x}$, where $\kappa\in(0,1]$. Then every Dini-stationary point of $h$ is a global minimizer, i.e., $\bs{\rm Dcrit}(h)\subseteq \argmin{\mathbb{R}^n} h$. Consequently, $h$ has no saddle points in the Dini sense.

\item \label{prop:no_saddle:b} If $h$ is $(\kappa,\gamma)$-strongly quasar-convex with $\gamma>0$, then
$\bs{\rm Dcrit}(h)=\{\ov{x}\}$, that is, the unique stationary point is the unique global minimizer.
\end{enumerate}
\end{theorem}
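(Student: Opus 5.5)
Part (a) will be proved by contraposition, using only the defining inequality \eqref{eq:def:quasar} with $\gamma=0$ along the segment from $x$ to $\ov x$. Let $x\in\dom h$ with $x\notin\argmin{\R^n}h$, so $h(x)>h(\ov x)$; since $h$ is proper and $\ov x$ is a minimizer, $h(\ov x)$ is finite. Set $d:=\ov x-x$. For $t\in(0,1]$ we have $x+td=t\ov x+(1-t)x$. Quasar-convexity with $\lambda=t$ then gives
\[
h(x+td)-h(x)\le \kappa t\bigl(h(\ov x)-h(x)\bigr).
\]
Dividing by $t>0$ and taking $\liminf$ as $t\downarrow0$, we get
\[
h^{D-}(x;d)\le -\kappa\bigl(h(x)-h(\ov x)\bigr)<0.
\]
Hence $x\notin\bs{\rm Dcrit}(h)$, which proves $\bs{\rm Dcrit}(h)\subseteq\argmin{\R^n}h$. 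No Lipschitz continuity is used; we only need $x\in\dom h$ and the convexity of $\dom h$, which guarantees that the segment points lie in $\dom h$, although the inequality holds in any case. The remark about saddle points then follows immediately: a Dini-stationary point that is not a local minimizer would have to be a non-minimizer, and there are none.

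For part (b), $\gamma>0$ gives $\kappa$-quasar-convexity as well, because the quadratic term is nonpositive: $\lambda(1-\lambda/(2-\kappa))\ge0$ since $\lambda\le1\le2-\kappa$. So part (a) applies and gives $\bs{\rm Dcrit}(h)\subseteq\argmin{\R^n}h=\{\ov x\}$, where the equality is Remark~\ref{rem:on_beg_prop}~\ref{rem:on_beg_prop:b}; alternatively, it follows from the growth bound \eqref{qwc:sconvex}. For the reverse inclusion, $\ov x$ is a global minimizer, so $h(\ov x+td)-h(\ov x)\ge0$ for all $t>0$ and every $d$. Therefore $h^{D-}(\ov x;d)\ge0$ for all $d$, and $\ov x\in\bs{\rm Dcrit}(h)$.

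There is no real obstacle here. The only points needing care are that $h(\ov x)$ is finite, so the difference quotient is well defined, and that in the strongly quasar-convex case the quadratic term has the correct sign, so that the reduction to part (a) is valid.
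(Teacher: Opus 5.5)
Your proof is correct and matches the paper's argument: the direction $d=\ov x-x$, the quasar-convexity bound $h^{D-}(x;d)\le -\kappa\bigl(h(x)-h(\ov x)\bigr)<0$ (contraposition instead of the paper's contradiction), and for (b) the reduction to (a) together with uniqueness of the minimizer from Remark~\ref{rem:on_beg_prop}~\ref{rem:on_beg_prop:b}. Your explicit check that the quadratic term is nonnegative, so that strong quasar-convexity implies quasar-convexity, fills in a step the paper leaves implicit.
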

\begin{proof}
\ref{prop:no_saddle:a} 
Let $x\in \bs{\rm Dcrit}(h)$ and assume, by contradiction, that $x\notin \argmin{\R^n} h$. Then
$h(x)>h(\ov{x})$. Set $d:=\ov{x}-x$. For every $t\in(0,1]$, by $\kappa$-quasar-convexity of $h$ with respect to $\ov{x}$,
we have
\[
h(x+td)=h((1-t)x+t\ov{x})
\le \kappa t\,h(\ov{x})+(1-\kappa t)\,h(x)
= h(x)-\kappa t\bigl(h(x)-h(\ov{x})\bigr).
\]
Therefore,
\[
\frac{h(x+td)-h(x)}{t}
\le -\kappa\bigl(h(x)-h(\ov{x})\bigr)<0.
\]
Passing to the lower limit as $t\downarrow 0$, we obtain $h^{D-}(x;d)<0$, which contradicts the stationarity of $x$. Hence
$x\in \argmin{\R^n} h$, and thus $\bs{\rm Dcrit}(h)\subseteq \argmin{\R^n} h$.
Since every stationary point is a global minimizer, it cannot be a saddle point.
\\
\ref{prop:no_saddle:b} If $h$ is $(\kappa,\gamma)$-strongly quasar-convex with $\gamma>0$, then $\argmin{\R^n} h$ is a singleton by Remark~\ref{rem:on_beg_prop}~\ref{rem:on_beg_prop:b}. Hence Assertion~\ref{prop:no_saddle:a} yields
$\bs{\rm Dcrit}(h)\subseteq \{\ov{x}\}$, while $\ov{x}\in \bs{\rm Dcrit}(h)$ is immediate from the definition of the minimizer. Therefore, $\bs{\rm Dcrit}(h)=\{\ov{x}\}$.
\end{proof}

\begin{remark}[\textbf{Dini and Clarke stationary points}]\label{rem:no_saddle}
If $h$ is locally Lipschitz on $\dom{h}$, then by Theorem~\ref{thm:char:clarke} and Proposition~\ref{crit:are:glob}, every Clarke critical point of a $(\kappa,\gamma)$-strongly quasar-convex function is a global minimizer. In particular, there are no spurious stationary points in the Clarke sense. In the strongly quasar-convex case, Remark~\ref{rem:on_beg_prop}~\ref{rem:on_beg_prop:b} implies that this Clarke critical point is unique. Therefore, for locally Lipschitz quasar-convex functions, both the Dini-stationary and the Clarke-critical viewpoints lead to the same conclusion: the only stationary
objects are global minimizers, and in the strongly quasar-convex case the unique stationary point is the unique global minimizer.
\end{remark}

 In the following example, we exhibit a nonconvex quasar-convex function whose set of global minimizers is nonisolated. More precisely, the minimizer set is star-convex.

 \begin{example}[\textbf{Nonisolated star-convex  minimizers}]\label{ex:star_shaped_min}
 Let $R(\theta):=1+0.35\cos(4\theta)$ and, for $x\neq 0$, written in polar form $x=r(\cos\theta,\sin\theta)$, define
 $\rho(x):=\frac{r}{R(\theta)}$ with $\rho(0):=0$, and $h(x):=\max\{0,\rho(x)-1\}$. Then,  for every
 $t\in[0,1]$ and $x\in\R^2$, we have $\rho(tx)=t\rho(x)$, and therefore
 \[
 h(tx)=\max\{0,t\rho(x)-1\}\le t\max\{0,\rho(x)-1\}=t\,h(x).
 \]
 Hence $h$ is $1$-quasar-convex with respect to $\ov x=0$. Moreover,
 \[
 \argmint{x\in\R^2} h(x)=\left\{r(\cos\theta,\sin\theta):0\le r\le R(\theta),\ \theta\in[0,2\pi)\right\},
 \]
 which is star-convex with respect to the origin and not convex. In particular, this example
 shows that a quasar-convex function may be genuinely nonconvex and may possess a
 nonisolated minimizer set that is star-convex but not convex. A visualization of this example is provided in Figure~\ref{fig:quasar_landscape}, which
 highlights both the nonconvexity of the landscape and the star-convex, nonconvex geometry
 of the minimizer set.
 \end{example}

 \begin{figure}
     \centering
 \includegraphics[width=\textwidth]{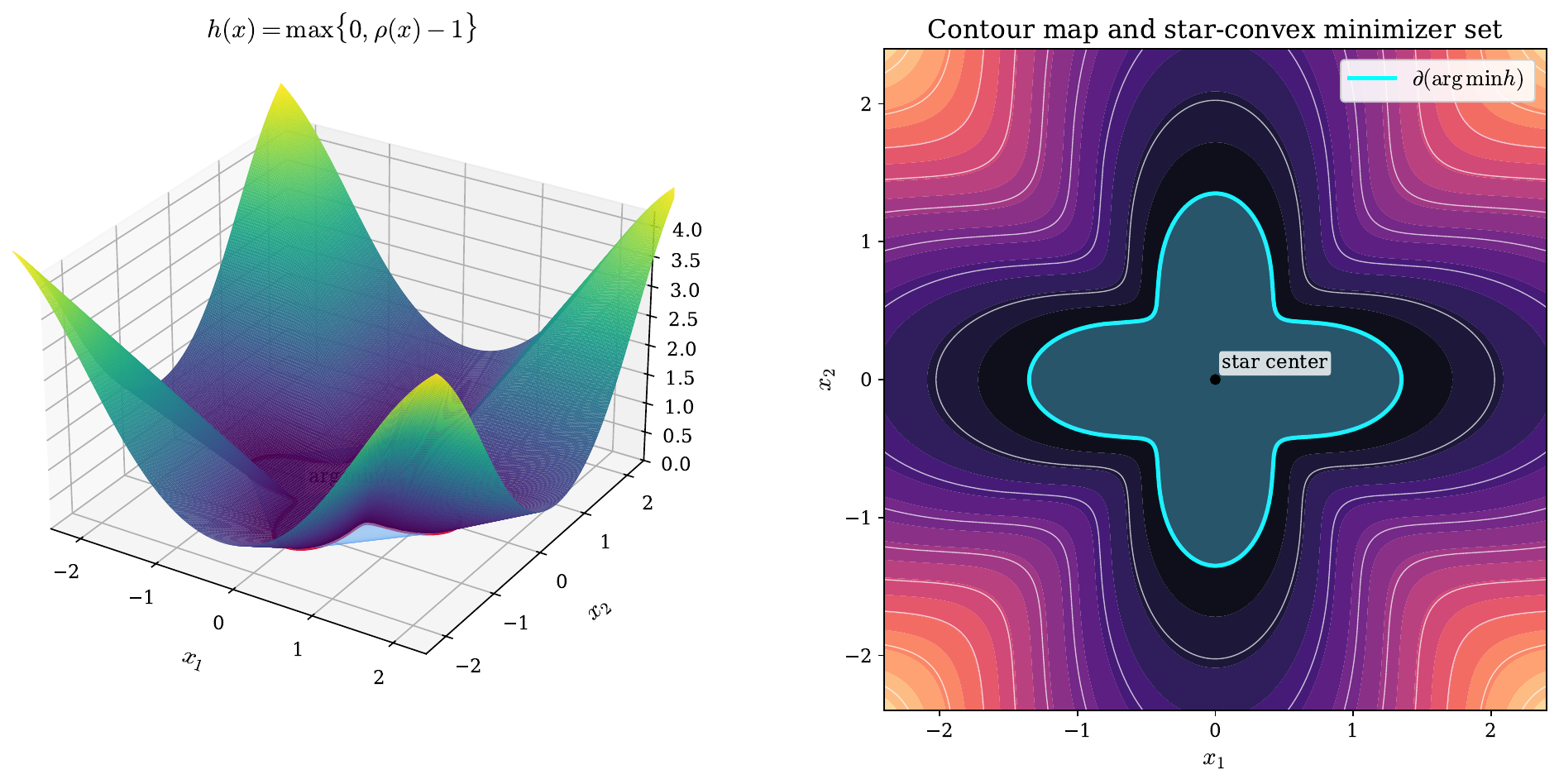}
     \caption{A nonconvex quasar-convex function whose minimizer set is star-convex  but not convex; see Example~\ref{ex:star_shaped_min}. }
     \label{fig:quasar_landscape}
 \end{figure}


\section{High-order proximal-point methods}\label{sec:04}
In this section, we study the high-order proximal-point algorithm (HiPPA) (see, e.g., \cite{Ahookhosh24,Ahookhosh23,Ahookhosh2025Asymptotic,Kabgani25itsdeal,Kabgani2025Moreau,Kabgani24itsopt,nesterov2021inexact,Nesterov2023a}), in which the regularization term is of the form $\|\cdot\|^{p}$ with $p>1$. As noted in \cite{Ahookhosh2025Asymptotic}, HiPPA exhibits superlinear convergence for uniformly quasiconvex functions, a class that includes uniformly convex and strongly quasiconvex functions. We show here that HiPPA with $p>2$ also attains superlinear convergence for strongly quasar-convex functions.

Let us begin by introducing the high-order proximal operator, which serves as the fundamental building block of the algorithmic framework developed in this section. It can be viewed as a natural generalization of the classical proximal mapping (corresponding to $p=2$), where the quadratic regularization is replaced by a higher-order power term

\begin{definition}[{\bf High-order proximal operator}]\label{def:hord:prox}
Let $p>1$, $\beta>0$, and let $h: \R^n \to \Rinf$ be a proper lsc function. The high-order proximal operator (HOPE) of $h$ with parameter $\beta$, denoted 
    $\prox{h}{\beta}{p}(\cdot): \R^n \rightrightarrows \R^n$, is defined as 
   \begin{equation}\label{eq:Hiorder-Moreau prox}
    \prox{h}{\beta}{p} (x) := \argmint{y\in \R^n} \left\{h(y)+\frac{1}{p\beta}\Vert x- y\Vert^p\right\}.
    \end{equation}     
\end{definition}
In general, the mapping $\prox{h}{\beta}{p}$ is set-valued. However, if
$h$ is convex, then the function
\[
y \mapsto h(y) + \frac{1}{p\beta}\|x - y\|^p
\]
is strictly convex for $p>1$, and therefore admits a unique minimizer.
In this case, $\prox{h}{\beta}{p}$ is single-valued (see, for instance,
\cite[Proposition~12.15]{Bauschke17}).

We next introduce the notion of a fixed point of the high-order proximal mapping, which will play a central role in the analysis of the iterative scheme. Under suitable assumptions, fixed points of $\prox{h}{\beta}{p}$ characterize minimizers of the original problem.

\begin{definition}[{\bf Proximal fixed point}]\label{def:fixedpoint}
Let $p>1$, $\beta>0$, and let $h: \R^n \to \Rinf$ be a proper lsc function. 
A point $x^*\in \dom{h}$ is called a \textit{proximal fixed point} if $x^*\in \prox{h}{\beta}{p}(x^*)$. The set of all such points is denoted by
$\Fix(\prox{h}{\beta}{p})$.
\end{definition}

The following theorem collects basic properties of HOPE. Note that the result holds for 
$(\kappa, \gamma)$-strongly quasar-convex functions with modulus $\gamma \geq 0$, that is, including the merely quasar-convex case.

\begin{theorem}[{\bf Properties of HOPE}]\label{th:nonemprox}
Let $h:\R^n \to \Rinf$ be a proper lsc function that is
$(\kappa,\gamma)$-strongly quasar-convex with $\kappa \in (0,1]$ and $\gamma\geq 0$.
The following statements hold:
\begin{enumerate}[label=(\textbf{\alph*}), font=\normalfont\bfseries, leftmargin=0.7cm]
\item\label{th:nonemprox:a} For each $\widehat{x}\in \R^n$, $\beta > 0$, and $p>1$, the set $\prox{h}{\beta}{p}(\widehat{x})$ is nonempty and compact;
\item\label{th:nonemprox:b} For each $\beta > 0$ and $p>1$,
\[
\argmint{x\in \R^n} \, h(x)= \Fix(\prox{h}{\beta}{p}).
\]
\item\label{th:nonemprox:c}
Let $\{x^k\}_{k\in \mathbb N}\subseteq \R^n$ and $\{\beta_k\}_{k\in \Nz}\subseteq (0,+\infty)$ satisfy
$x^k\to \widehat{x}$ and $\beta_k\to \beta>0$.
If $y^k \in \prox{h}{\beta_k}{p}(x^k)$ for all $k$, then the sequence $\{y^k\}_{k\in\Nz}$ is bounded, and all its cluster points belongs to $\prox{h}{\beta}{p}(\widehat{x})$.
\end{enumerate}
\end{theorem}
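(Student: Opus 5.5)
The plan is to use two structural facts: $h$ is bounded below by $h^*:=h(\ov{x})$, where $\ov{x}$ is a minimizer from Assumption~\ref{ass:basic}, and quasar-convexity with respect to $\ov{x}$ rules out spurious fixed points. Lower semicontinuity and the coercivity of the regularizer $\frac{1}{p\beta}\|\widehat{x}-\cdot\|^p$ will supply compactness.

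For \ref{th:nonemprox:a}, fix $\widehat{x}$, $\beta>0$, $p>1$, and set $\phi(y):=h(y)+\frac{1}{p\beta}\|\widehat{x}-y\|^p$. Since $h\ge h^*$, we get $\phi(y)\ge h^*+\frac{1}{p\beta}\|\widehat{x}-y\|^p$, so $\phi$ is coercive. It is also proper, because $\dom h\neq\emptyset$, and lsc. Every sublevel set $\{\phi\le\phi(y_0)\}$ with $y_0\in\dom h$ is therefore closed and bounded, hence compact, and Weierstrass gives a minimizer. The set of minimizers is the sublevel set at level $\inf\phi$, so it is closed and bounded, i.e.\ compact.

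For \ref{th:nonemprox:b}, first take $x^*\in\argmin{} h$. For every $y$ we have $h(y)+\frac{1}{p\beta}\|x^*-y\|^p\ge h(x^*)$, so $x^*$ is a proximal fixed point. Conversely, let $x^*\in\Fix(\prox{h}{\beta}{p})$. Then $x^*\in\dom h$, and for all $y$,
\[
h(x^*)\le h(y)+\tfrac{1}{p\beta}\|x^*-y\|^p .
\]
Suppose $h(x^*)>h^*$. Take $y=x_\lambda:=\lambda\ov{x}+(1-\lambda)x^*$ for $\lambda\in(0,1]$; it lies in $\dom h$ by convexity of the domain. Quasar-convexity, dropping the nonpositive quadratic term since $\gamma\ge0$, gives $h(x_\lambda)\le h(x^*)-\kappa\lambda(h(x^*)-h^*)$. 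Inserting this above yields
\[
\kappa\lambda\bigl(h(x^*)-h^*\bigr)\le \tfrac{\lambda^p}{p\beta}\|x^*-\ov{x}\|^p .
\]
Dividing by $\lambda$ and letting $\lambda\downarrow0$ makes the right side vanish because $p>1$, which contradicts $h(x^*)>h^*$. Hence $x^*\in\argmin{} h$. This is essentially the argument of Proposition~\ref{maxminlocal}\ref{maxminlocal:a}, and it is the step where $p>1$ is essential.

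For \ref{th:nonemprox:c}, the main obstacle is the limiting argument, because $h$ is only lsc. First I would show boundedness. Fix $\ov{x}$. Comparing the value at $y^k$ with the value at $\ov{x}$ gives
\[
h^*+\tfrac{1}{p\beta_k}\|x^k-y^k\|^p\le h(y^k)+\tfrac{1}{p\beta_k}\|x^k-y^k\|^p\le h^*+\tfrac{1}{p\beta_k}\|x^k-\ov{x}\|^p .
\]
So $\|x^k-y^k\|\le\|x^k-\ov{x}\|$, which is bounded since $x^k\to\widehat{x}$, and hence $\{y^k\}$ is bounded.

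Next, let $y^{k}\to\widetilde{y}$ along a subsequence $J$ and fix an arbitrary $z\in\R^n$. Optimality gives $h(y^k)+\frac{1}{p\beta_k}\|x^k-y^k\|^p\le h(z)+\frac{1}{p\beta_k}\|x^k-z\|^p$. Taking $\liminf$ over $J$, lsc of $h$ and continuity of the power term together with $\beta_k\to\beta>0$ give
\[
h(\widetilde{y})+\tfrac{1}{p\beta}\|\widehat{x}-\widetilde{y}\|^p\le h(z)+\tfrac{1}{p\beta}\|\widehat{x}-z\|^p .
\]
Since $z$ was arbitrary, $\widetilde{y}\in\prox{h}{\beta}{p}(\widehat{x})$. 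Here $h(\widetilde{y})<\infty$ follows by taking $z\in\dom h$.
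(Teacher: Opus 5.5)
Your proof is correct. Parts (a) and (b) follow the paper's argument. In (a) you use only $h\ge h^*$, which covers $\gamma=0$ and $\gamma>0$ together and makes the paper's appeal to $2$-supercoercivity for $\gamma>0$ unnecessary. In (b) you drop the quadratic term before letting $\lambda\downarrow 0$, whereas the paper keeps it and obtains $h(x^*)-h(\ov{x})\le -\frac{\gamma}{2}\|x^*-\ov{x}\|^2$; this difference is immaterial for the conclusion.

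Part (c) is where your route genuinely differs. The paper does not argue directly. It notes that $h$ is lsc and bounded below, so the parametric objective $(y,x,\beta)\mapsto h(y)+\frac{1}{p\beta}\|x-y\|^p$ meets the hypotheses of a cited high-order prox-boundedness result. It then invokes the Rockafellar--Wets parametric-minimization theorem (Theorem~1.17(b)), which rests on level-boundedness locally uniform in the parameter. You instead compare with $\ov{x}$ to get the explicit estimate $\|x^k-y^k\|\le\|x^k-\ov{x}\|$, which does not depend on $\beta_k$. You then carry out the liminf argument by hand, using lower semicontinuity of $h$ and $\beta_k\to\beta>0$.

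Your version is self-contained and gives a quantitative bound. The paper's version is shorter and needs only that $h$ is bounded below, not that its minimum is attained. Here a minimizer exists by definition, so your approach loses nothing.
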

\begin{proof}
\ref{th:nonemprox:a} 
Fix $\widehat{x}\in \R^n$, $\beta>0$, and define
\[
F(y):= h(y)+\frac{1}{p\beta}\|y-\widehat{x}\|^p.
\]
If $\gamma>0$, then $h$ is $2$-supercoercive by \cite[Proposition~12]{deBrito2025extending}, and therefore coercive. Therefore $F$ is coercive and lsc.
If $\gamma=0$, then $h$ is quasar-convex and admits a global minimizer,
hence it is bounded from below. Since $p>1$, the function
$\|y - \widehat{x}\|^p$ is coercive, and thus the sum $h(y) + \frac{1}{p\beta}\|y - \widehat{x}\|^p$
is coercive.
In both cases, $F$ is coercive and lsc.
By \cite[Corollary~1.10]{Rockafellar09}, the compactness of sublevel sets of theses functions implies that
\[
\argmint{y\in \R^n} \left\{h(y)+\frac{1}{p\beta}\Vert \widehat{x}- y\Vert^p\right\}\neq \emptyset,
\]
and $\prox{h}{\beta}{p} (\widehat{x})$ is compact.
\\
\ref{th:nonemprox:b} Let $\widehat{x} \in \Fix(\prox{h}{\beta}{p})$ and $\beta>0$ be arbitrary. Then,
\begin{equation}
 h(\widehat{x}) \leq h(x) + \frac{1}{p \beta} \lVert \widehat{x} - x \rVert^{p}, \qquad\forall ~ x \in \mathbb{R}^{n}. \nonumber
\end{equation}
Take $x_{\lambda} := \lambda \ov{x} + (1-\lambda) \widehat{x}$ with $\ov{x} \in \argmin{ \mathbb{R}^{n}}\,h$ and $\lambda \in (0,1)$. From  $(\kappa,\gamma)$-strong quasar-convexity of $h$, for any $\lambda\in (0,1)$, we have
\begin{align*}
  h(\widehat{x}) &\leq h(\lambda \ov{x} + (1-\lambda) \widehat{x}) + \frac{1}{p \beta} \lVert \lambda (\ov{x} - \widehat{x} ) \rVert^{p} \\
 & \leq \lambda \kappa h(\ov{x}) + (1-\lambda \kappa) h(\widehat{x}) - \lambda \left( 1 - \frac{\lambda}{2-\kappa} \right) \frac{\kappa \gamma}{2} \lVert \widehat{x} - \ov{x} \rVert^{2} + \frac{\lambda^{p}}{p \beta} \lVert \widehat{x} - \ov{x} \rVert^{p}.
\end{align*}
This leads to,
\[
\lambda \kappa (h(\widehat{x}) - h(\ov{x})) \leq - \lambda \left( 1 - \frac{\lambda}{2-\kappa} \right) \frac{\kappa \gamma}{2} \lVert \widehat{x} - \ov{x} \rVert^{2} + \frac{\lambda^{p}}{p \beta} \lVert \widehat{x} - \ov{x} \rVert^{p},
\]
which in turn implies,
\[
h(\widehat{x}) - h(\ov{x}) \leq - \left( 1 - \frac{\lambda}{2-\kappa} \right) \frac{\gamma}{2} \lVert \widehat{x} - \ov{x} \rVert^{2} + \frac{\lambda^{p-1}}{p \kappa \beta} \lVert \widehat{x} - \ov{x} \rVert^{p}.
\]
By letting $\lambda \downarrow 0$, it comes to
\[
h(\widehat{x}) - h(\ov{x}) \leq - \frac{\gamma}{2} \lVert \widehat{x} - \ov{x} \rVert^{2},
\]
i.e., $h(\widehat{x}) \leq h(\ov{x})$, thus $\widehat{x} \in \argmin{\mathbb{R}^{n}}\,h$.
Now, let $\ov{x} \in \argmin{ \mathbb{R}^{n}}\,h$. For every $y \in \mathbb{R}^{n}$,
$$h(\ov{x}) + \frac{1}{p \beta} \lVert \ov{x} - \ov{x} \rVert^{p} = h(\ov{x}) \leq h(y) \leq h(y) + \frac{1}{p \beta} \lVert y - \ov{x} \rVert^{p},$$
and the result follows.
\\
\ref{th:nonemprox:c} Since $h$ is quasar-convex, it admits a global minimizer, hence it is bounded from below. Since it is also lsc, it satisfies the assumptions of \cite[Theorem~3.4]{Kabgani24itsopt}, and the result follows from \cite[Theorem~1.17~(b)]{Rockafellar09}.
\end{proof}

We are now in a position to introduce the high-order proximal-point algorithm (HiPPA) associated with the mapping $\prox{h}{\beta}{p}$. Starting from an initial point $x^0 \in \R^n$, the method generates a sequence $\{x^k\}_{k\in\Nz}$ by successively solving high-order proximal subproblems with parameters $\{\beta_k\}_{k\in\Nz}$. In view of Theorem~\ref{th:nonemprox}~\ref{th:nonemprox:b}, if the stopping criterion $x^{k+1}=x^k$ is satisfied, then $x^k$ is a proximal fixed point and therefore a global minimizer of $h$.

\begin{algorithm}[H]
\caption{HiPPA (High-Order Proximal-Point Algorithm)}\label{ppa:sqcx:2}
\begin{description}
 \item[Step 0.] Choose an initial point $x^0 \in \R^n$, a parameter $p>1$, and a sequence
$\{\beta_k\}_{k\in\Nz} \subseteq (0,+\infty)$ bounded away from zero. Set $k=0$.
 \item[Step 1.] Choose
 \begin{equation}\label{step:sqcx}
  x^{k+1} \in \prox{h}{\beta_k}{p} (x^{k}).
 \end{equation}

 \item[Step 2.] If $x^{k+1} = x^{k}$, then STOP; in this case, $x^{k} \in \argmin{\R^n} h$. Otherwise, set $k=k+1$ and return to \textbf{Step 1}.
 \end{description}
\end{algorithm}

A natural stopping criterion for Algorithm~\ref{ppa:sqcx:2} is $x^{k+1}=x^k$. In practice, however, it is typically replaced by the condition $\|x^{k+1}-x^k\|\le \varepsilon$, where $\varepsilon>0$ is a prescribed tolerance. The following result provides an upper bound on the number of iterations required to satisfy this practical stopping rule.

\begin{proposition}[{\bf Iteration complexity}]\label{prop:reqexe:p}
Let $h: \R^n \to \Rinf$ be a proper, lsc, and $(\kappa,\gamma)$-strongly
quasar-convex function with respect to some $\ov{x} \in \argmin{\mathbb{R}^{n}}\,h$, where $\kappa \in (0,1]$ and
$\gamma \geq 0$. Assume that
\begin{equation}\label{param:cond:2}
0<\beta'\leq \beta_k\leq \beta'', \qquad \forall k\in\Nz.
\end{equation}
 Let $\{x^k\}_{k \in \Nz}$ be the sequence generated by Algorithm \ref{ppa:sqcx:2}. Then, for every $\varepsilon>0$, the stopping criterion $\|x^{k+1}-x^k\|\le \varepsilon$ is satisfied after at most
 \begin{equation}\label{comp:bound:p}
 \left\lceil\frac{p \beta^{\prime \prime} (h(x^0) - \min_{x \in \mathbb{R}^{n}} h(x))}{\varepsilon^{p}}\right\rceil,
 \end{equation}
 iterations.
\end{proposition}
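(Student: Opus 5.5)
The plan is the standard sufficient-decrease and telescoping argument for proximal-point schemes; quasar-convexity enters only to guarantee that the iterates exist and that $h$ is bounded below.

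\emph{Well-definedness.} By Theorem~\ref{th:nonemprox}~\ref{th:nonemprox:a}, each set $\prox{h}{\beta_k}{p}(x^k)$ is nonempty. Hence the sequence $\{x^k\}_{k\in\Nz}$ is well defined. We may also assume $h(x^0)<+\infty$; otherwise the bound \eqref{comp:bound:p} is vacuous.

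\emph{Sufficient decrease.} Since $x^{k+1}$ minimizes $y\mapsto h(y)+\frac{1}{p\beta_k}\|x^k-y\|^p$, comparing its value with the competitor $y=x^k$ gives
\[
h(x^{k+1})+\frac{1}{p\beta_k}\|x^{k+1}-x^k\|^p\le h(x^k).
\]
Using $\beta_k\le\beta''$ from \eqref{param:cond:2}, this becomes
\[
\|x^{k+1}-x^k\|^p\le p\beta''\bigl(h(x^k)-h(x^{k+1})\bigr).
\]
In particular, $\{h(x^k)\}$ is nonincreasing and stays finite.

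\emph{Telescoping.} Suppose the stopping rule fails for $k=0,\dots,N-1$, i.e., $\|x^{k+1}-x^k\|>\varepsilon$ for each such $k$. Summing the decrease inequality over these indices gives
\[
N\varepsilon^p<\sum_{k=0}^{N-1}\|x^{k+1}-x^k\|^p\le p\beta''\bigl(h(x^0)-h(x^N)\bigr)\le p\beta''\bigl(h(x^0)-h^*\bigr).
\]
The last step uses $h(x^N)\ge h^*=\min_{x\in\R^n} h(x)$, which is finite by Assumption~\ref{ass:basic}. Hence
\[
N<\frac{p\beta''(h(x^0)-h^*)}{\varepsilon^p}.
\]
So the criterion must hold at some iteration index bounded by the ceiling in \eqref{comp:bound:p}.

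\emph{Main obstacle.} There is no real obstacle. The only care needed is the off-by-one indexing of the ceiling and the degenerate case $h(x^0)=h^*$. In that case $x^0$ is a minimizer, so $x^0\in\Fix(\prox{h}{\beta_0}{p})$ by Theorem~\ref{th:nonemprox}~\ref{th:nonemprox:b}. Moreover, the decrease inequality forces $\|x^1-x^0\|=0$, so the stopping rule holds immediately at $k=0$, consistent with a bound of zero.
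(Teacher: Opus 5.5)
Your proof is correct and follows essentially the same route as the paper. Both compare the prox value with the competitor $y=x^k$ to get $\|x^{k+1}-x^k\|^p\le p\beta''\bigl(h(x^k)-h(x^{k+1})\bigr)$, telescope from $k=0$ to $N-1$, and bound below by $h^*$; the difference is only cosmetic (you argue by contrapositive, the paper bounds $N\min_{0\le k\le N-1}\|x^{k+1}-x^k\|^p$), though your version handles the early-stop and $h(x^0)=h^*$ edge cases slightly more carefully.
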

\begin{proof}
If $x^{k+1}=x^k$ for some $k$, then the conclusion is immediate. Thus, assume that $x^{k+1} \neq x^k$ for all $k$.  Since 
$x^{k+1} \in \prox{h}{\beta_k}{p}(x^k)$, we have
\begin{equation*}\label{eqnew:teles}
 h (x^{k+1}) < h(x^{k+1}) + \frac{1}{p \beta_k} \Vert x^{k+1} - x^k \Vert^p \leq h(x^k),
\end{equation*}
which implies
\begin{equation}\label{tel:one}
 \Vert x^{k+1} - x^k \Vert^p \leq p \beta_k (h(x^k) - h(x^{k+1})),\qquad ~ \forall ~ k \in \mathbb{N}_0.
\end{equation}
Summing \eqref{tel:one} for $k=0,\dots,N-1$ and using $\beta_k\le \beta''$, we obtain
\[
\sum_{k=0}^{N-1}\Vert x^{k+1} - x^k\Vert^{p} \leq p \beta^{\prime \prime} (h(x^0) - \mathop{\bs\min}\limits_{x \in \mathbb{R}^{n}} h(x)),
\]
i.e.,
\[
N \mathop{\bs\min}\limits_{0\leq k\leq N-1} \Vert x^{k+1} - x^k\Vert^{p} \leq p \beta^{\prime \prime} (h(x^0) - \mathop{\bs\min}\limits_{x \in \mathbb{R}^{n}} h(x)).
\]
Hence, if
\[
N \ge \frac{p\beta''\bigl(h(x^0)-\mathop{\bs\min}\limits_{x\in\R^n} h(x)\bigr)}{\varepsilon^p},
\]
then
\[
\mathop{\bs\min}\limits_{0\le k\le N-1}\|x^{k+1}-x^k\| \le \varepsilon.
\]
In other words, there exists some $k\in\{0,\dots,N-1\}$ such that $\|x^{k+1}-x^k\|\le \varepsilon$, adjusting our desired result.
\end{proof}
Note that Proposition~\ref{prop:reqexe:p} provides an iteration bound for the
practical stopping criterion based on the successive iterations. It does not,
by itself, quantify objective accuracy or distance to the minimizer.

In the subsequent subsections, we analyze the behavior of the sequence $\{x^k\}_{k\in\Nz}$ generated by Algorithm~\ref{ppa:sqcx:2}. We first establish basic properties of the iterations, including well-posedness and descent
properties of the proximal steps. We then prove global convergence of the method under quasar-convexity assumptions. Finally,  we derive convergence rates and complexity estimates,
highlighting the influence of the parameter $p$ on the asymptotic behavior
of the algorithm.

\subsection{Global convergence}
As the first step, we establish fundamental properties of the sequence generated by Algorithm~\ref{ppa:sqcx:2}, including descent, the summability of the steps, boundedness, and convergence to global minimizers.

\begin{theorem}[{\bf Convergence analysis}]\label{basic:prop:hippa}
Let $h: \R^n \to \Rinf$ be a proper, lsc, and $(\kappa,\gamma)$-strongly
quasar-convex function with respect to some
$\ov{x} \in \argmin{\mathbb{R}^{n}}\,h$, for $\kappa \in (0,1]$ and
$\gamma \geq 0$. Assume that relation \eqref{param:cond:2} holds.
Let $\{x^k\}_{k\in\Nz}$ be generated by Algorithm~\ref{ppa:sqcx:2}, with $p>1$. Then the following assertions hold:
\begin{enumerate}[label=(\textbf{\alph*}), font=\normalfont\bfseries, leftmargin=0.7cm]
 \item\label{basic:prop:hippa:a} The sequence $\{h(x^k)\}_{k \in \Nz}$ is  decreasing.
 
\item\label{basic:prop:hippa:b} $\sum_{k=0}^{\infty} \Vert x^{k+1} - x^k \Vert^p < \infty$.

\item\label{basic:prop:hippa:c} The sequence $\{x^k\}_{k \in \Nz}$ is bounded  and the sequence 
$\{\|x^k-\ov x\|\}_{k\in\Nz}$ is nonincreasing.

\item\label{basic:prop:hippa:d1} The sequence $\{h(x^k)\}_{k\in\Nz}$ converges to $\min_{x\in \mathbb{R}^n} h(x)$.

\item \label{basic:prop:hippa:d}
Every cluster point of $\{x^k\}_{k\in\Nz}$ belongs to $\argmin{\mathbb{R}^{n}}\,h$.

 \item\label{basic:prop:hippa:e1} 

If $\gamma=0$ and $h$ is $\kappa$-quasar-convex with respect to every
$\ov x\in\argmin{\mathbb{R}^{n}}\,h$, then $\{x^k\}_{k\in\Nz}$ converges to a point in
$\argmin{\mathbb{R}^{n}}\,h$.

 \item\label{basic:prop:hippa:e2}  If $\gamma>0$, then $\{x^k\}_{k \in \Nz}$ converges to $\{\ov{x}\} = \argmin{\mathbb{R}^{n}}\,h$.

\end{enumerate}
\end{theorem}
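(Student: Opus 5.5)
The plan is to derive everything from two inequalities. The first is the defining inequality of the proximal step. The second is a ``variational'' inequality obtained by testing the proximal subproblem along the segment from $x^{k+1}$ toward $\ov x$.

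\textbf{Parts (a) and (b).} Comparing $y=x^{k+1}$ with $y=x^k$ in \eqref{step:sqcx} gives
\[
h(x^{k+1})+\frac{1}{p\beta_k}\|x^{k+1}-x^k\|^p\le h(x^k),
\]
exactly as in \eqref{tel:one}. This yields monotonicity of $\{h(x^k)\}$, strict while $x^{k+1}\neq x^k$. Telescoping with $\beta_k\le\beta''$ and $h\ge h^*$ gives (b). In particular, $\|x^{k+1}-x^k\|\to0$.

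\textbf{Key inequality (the main step).} Fix $\lambda\in(0,1)$ and put $y_\lambda:=\lambda\ov x+(1-\lambda)x^{k+1}$. Optimality of $x^{k+1}$ gives
\[
h(x^{k+1})+\frac{1}{p\beta_k}\|x^{k+1}-x^k\|^p\le h(y_\lambda)+\frac{1}{p\beta_k}\|y_\lambda-x^k\|^p.
\]
By quasar-convexity (dropping the nonnegative $\gamma$-term), $h(y_\lambda)\le h(x^{k+1})-\kappa\lambda\bigl(h(x^{k+1})-h^*\bigr)$. Hence
\[
\kappa\lambda p\beta_k\bigl(h(x^{k+1})-h^*\bigr)\le \|y_\lambda-x^k\|^p-\|x^{k+1}-x^k\|^p .
\]
Divide by $\lambda$ and let $\lambda\downarrow0$. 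The function $\|\cdot\|^p$ is differentiable for $p>1$, so the right-hand side tends to $p\|x^{k+1}-x^k\|^{p-2}\langle x^{k+1}-x^k,\ov x-x^{k+1}\rangle$. We obtain
\[
\kappa\beta_k\bigl(h(x^{k+1})-h^*\bigr)\le \|x^{k+1}-x^k\|^{p-2}\langle x^{k}-x^{k+1},x^{k+1}-\ov x\rangle .
\]
The main obstacle is making this limit argument clean and handling the case $x^{k+1}=x^k$, which is trivial by Theorem~\ref{th:nonemprox}~\ref{th:nonemprox:b}. Since the left-hand side is nonnegative, $\langle x^k-x^{k+1},x^{k+1}-\ov x\rangle\ge0$.

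\textbf{Part (c).} Apply the identity \eqref{3:points} with $x=x^{k+1}$, $y=\ov x$, $z=x^k$ to the sign condition just obtained. This gives the Fejér-type relation
\[
\|x^{k+1}-\ov x\|^2+\|x^{k+1}-x^k\|^2\le\|x^k-\ov x\|^2 .
\]
Hence $\{\|x^k-\ov x\|\}$ is nonincreasing and $\{x^k\}$ is bounded.

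\textbf{Parts (d) and (e).} Using Cauchy--Schwarz in the key inequality, together with $\beta_k\ge\beta'$ and $\|x^{k+1}-\ov x\|\le\|x^0-\ov x\|$, gives
\[
0\le h(x^{k+1})-h^*\le\frac{\|x^0-\ov x\|}{\kappa\beta'}\,\|x^{k+1}-x^k\|^{p-1}.
\]
The right-hand side tends to $0$ by (b), which proves (d). For (e), if $x^{k_j}\to\widehat x$, then lower semicontinuity gives $h(\widehat x)\le\lim h(x^{k_j})=h^*$, so $\widehat x\in\argmin{\R^n}h$.

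\textbf{Parts (f) and (g).} For (f), the Fejér relation holds with respect to every minimizer, since quasar-convexity is assumed with respect to each of them. Take a cluster point $\widehat x$, which is a minimizer by (e). Then $\|x^k-\widehat x\|$ is nonincreasing and tends to $0$ along a subsequence, hence along the whole sequence. For (g), Remark~\ref{rem:on_beg_prop}~\ref{rem:on_beg_prop:b} shows that $\argmin{\R^n}h=\{\ov x\}$. The sequence is bounded, and by (e) every cluster point equals $\ov x$, so the whole sequence converges to $\ov x$.
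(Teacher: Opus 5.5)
Your proof is correct. For (a)--(d), (f) and (g) it is essentially the paper's argument. You use the same test point $\lambda\ov x+(1-\lambda)x^{k+1}$ in the subproblem with $\lambda\downarrow 0$; this is exactly \eqref{eq:key-fejer}. You then use \eqref{3:points} to get Fej\'er monotonicity, and the same Cauchy--Schwarz bound to get convergence of the function values.

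The one genuine difference is in (e). The paper takes a subsequence with $\beta_{k_j}\to\beta>0$ and uses $\|x^{k_j+1}-x^{k_j}\|\to 0$. It then applies the stability of the proximal map (Theorem~\ref{th:nonemprox}~\ref{th:nonemprox:c}) to get $\widehat x\in\prox{h}{\beta}{p}(\widehat x)$ for a cluster point $\widehat x$. Finally it invokes the fixed-point characterization of Theorem~\ref{th:nonemprox}~\ref{th:nonemprox:b}.

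You instead combine (d) with lower semicontinuity: $h(\widehat x)\le\liminf_{j\to\infty}h(x^{k_j})=h^*$. This is shorter and avoids the prox-stability result and the fixed-point characterization altogether. The price is that your (e) now depends on (c)--(d). The paper's (e) is logically independent of the Fej\'er and function-value arguments.

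In (f) you also carry out the Opial-type step directly, using Fej\'er monotonicity with respect to the particular cluster point $\widehat x$, rather than citing \cite[Theorem~5.11]{Bauschke17}; the two are equivalent.

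Incidentally, your telescoping in (b) correctly uses the upper bound $\beta_k\le\beta''$. The paper's text mentions $\beta'$ at that point.
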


\begin{proof}
\ref{basic:prop:hippa:a} Suppose that $x^{k+1} \neq x^{k}$ for all $k$.  Since 
$x^{k+1} \in \prox{h}{\beta_k}{p}(x^k)$, we have
\begin{equation}\label{eq1:teles}
 h (x^{k+1}) < h(x^{k+1}) + \frac{1}{p \beta_k} \Vert x^{k+1} - x^k \Vert^p \leq h(x^k),
\end{equation}
and the result follows.
\\
\ref{basic:prop:hippa:b}: It follows immediately by telescoping \eqref{eq1:teles} and using $\beta_k \ge \beta'>0$.
\\
\ref{basic:prop:hippa:c}: 
Since every strongly quasar-convex function is quasar-convex, we only need to prove the case $\gamma=0$. Indeed,
we will show that $\{x^k\}_{k\in\Nz}$ is Fej\'er monotone with respect to $\{\ov x\}$ 
(see \cite[Definition 5.1]{Bauschke17}). Fix $k\in\Nz$ and $\lambda\in[0,1]$. Since $x^{k+1} \in \prox{h}{\beta_k}{p}(x^k)$, taking
\[
x=\lambda\ov x+(1-\lambda)x^{k+1},
\]
we obtain
\begin{align}\label{eq:qc-prox-step}
h(x^{k+1})+\frac{1}{p\beta_k}\|x^{k+1}-x^k\|^p
&\leq h(\lambda\ov x+(1-\lambda)x^{k+1})
+\frac{1}{p\beta_k}\|\lambda\ov x+(1-\lambda)x^{k+1}-x^k\|^p \notag \\
&\leq \kappa\lambda h(\ov x)+(1-\kappa\lambda)h(x^{k+1})
+\frac{1}{p\beta_k}\|(x^{k+1}-x^k)+\lambda(\ov x-x^{k+1})\|^p,
\end{align}
where the second inequality follows from the $\kappa$-quasar-convexity of $h$ with respect to $\ov x$.
Rearranging \eqref{eq:qc-prox-step}, we get
\begin{equation}\label{eq:before-limit}
\beta_k\kappa\big(h(x^{k+1})-h(\ov x)\big)
\leq \frac{\|(x^{k+1}-x^k)+\lambda(\ov x-x^{k+1})\|^p-\|x^{k+1}-x^k\|^p}{p\lambda}.
\end{equation}
Since the mapping $z\mapsto \|z\|^p$ is Fr\'echet differentiable on $\mathbb{R}^n$ for every $p>1$,
letting $\lambda\downarrow 0$ in \eqref{eq:before-limit} yields
\begin{equation}\label{eq:key-fejer}
\beta_k\kappa\left(h(x^{k+1})-h(\ov x)\right)
\leq\|x^{k+1}-x^k\|^{p-2}\langle x^{k+1}-x^k,\ov x-x^{k+1}\rangle.
\end{equation}
Applying the identity \eqref{3:points} yields
\[
2\langle x^{k+1}-x^k,\ov x-x^{k+1}\rangle =\|x^k-\ov x\|^2-\|x^{k+1}-\ov x\|^2-\|x^{k+1}-x^k\|^2.
\]
Together with \eqref{eq:key-fejer} and  $h(x^{k+1})\geq h(\ov x)$, this ensures
\begin{equation}\label{eq:b-eq03}
0\leq2\beta_k\kappa\big(h(x^{k+1})-h(\ov x)\big)\leq\|x^{k+1}-x^k\|^{p-2}
\left(\|x^k-\ov x\|^2-\|x^{k+1}-\ov x\|^2-\|x^{k+1}-x^k\|^2\right),
\end{equation}
i.e.,
\[
\|x^{k+1}-\ov x\|\leq \|x^k-\ov x\|, \qquad \forall k\in\Nz,
\]
i.e., $\{x^k\}_{k\in\Nz}$ is Fej\'er monotone with respect to $\{\ov{x}\}$.
In particular, it is bounded. 
\\
\ref{basic:prop:hippa:d1}: Applying \eqref{eq:key-fejer} and the Cauchy--Schwarz inequality, it follows that
\[
0\le \beta_k\kappa\big(h(x^{k+1})-h(\ov x)\big)
\leq \|x^{k+1}-x^k\|^{p-1}\|\ov x-x^{k+1}\|.
\]
By Assertions~\ref{basic:prop:hippa:b} and \ref{basic:prop:hippa:c},
$\|x^{k+1}-x^k\|\to0$ and $\{x^k\}_{k\in\Nz}$ is bounded. Since $p>1$ and
$\beta_k\ge\beta'>0$, we obtain
\[
h(x^{k+1})\to h(\ov x)=\min_{x\in \mathbb{R}^n} h(x).
\]
\ref{basic:prop:hippa:d}: We define $y^k := x^{k+1}$. Let $\widehat{x}$ be a cluster point of the sequence $\{x^k\}_{k \in \mathbb{N}_0}$. Then there exists a subsequence $\{x^{k_j}\}_{j \in \mathbb{N}_0}$ such that $x^{k_j} \to \widehat{x}$ as $j \to +\infty$. By considering the corresponding subsequence $\{y^{k_j}\}_{j \in \mathbb{N}_0}$, Assertion~\ref{basic:prop:hippa:b} implies that $\Vert y^{k_j} - x^{k_j} \Vert  \to 0.$ Consequently, we also have $y^{k_j} \to \widehat{x}$ as $j \to +\infty$. Since the sequence $\{\beta_k\}_{k \in \mathbb{N}_0}$ is bounded, we may assume, by passing to a further subsequence if necessary, that $\beta_{k_j} \to \beta$ as $j \to +\infty$ for some $\beta > 0$.
By the definition of iterations, $y^{k_j} \in \prox{h}{\beta_{k_j}}{p}(x^{k_j})$. Applying Theorem \ref{th:nonemprox}~\ref{th:nonemprox:c}, we conclude that $\widehat{x} \in \prox{h}{\beta}{p}(\widehat{x})$, i.e., $\widehat{x}$ is a fixed point. Finally, by Theorem~\ref{th:nonemprox}~\ref{th:nonemprox:b}, we have $\widehat{x} \in \argmin{\mathbb{R}^{n}}\,h$.
\\
\ref{basic:prop:hippa:e1}: Assume that $\gamma=0$. By Assertion~\ref{basic:prop:hippa:c}, the sequence 
$\{x^k\}_{k\in\Nz}$ is bounded, i.e., it admits a cluster point. By Assertion~\ref{basic:prop:hippa:d}, every cluster point belongs to $\argmin{\mathbb{R}^n} h$. Since, in addition,
$\{x^k\}_{k\in\Nz}$ is Fej\'er monotone with respect to $\argmin{\mathbb{R}^n} h$, it follows from
\cite[Theorem 5.11]{Bauschke17} that $\{x^k\}_{k\in\Nz}$ converges to a point in
$\argmin{\mathbb{R}^n} h$.
\\
\ref{basic:prop:hippa:e2}: It follows from Assertion~\ref{basic:prop:hippa:c} and \ref{basic:prop:hippa:d} and because the set of minimizers of strongly quasar-convex functions is a singleton. 
\end{proof}

\begin{remark}
\begin{itemize}
 \item[$(i)$] Theorem~\ref{basic:prop:hippa} shows that HiPPA inherits the main structural properties of the classical proximal-point method, while accommodating the broader class of quasar-convex functions. In particular, the method produces a descent sequence with vanishing steps, and all accumulation points are global minimizers.

 \item[$(ii)$] Note that in Theorem \ref{basic:prop:hippa}, part \ref{basic:prop:hippa:e1}, function $h$ needs to be quasar-convex with respect to every $\ov x\in\argmin{\mathbb{R}^{n}}\,h$. This assumption can not be weakened, as there are functions which are quasar-convex with respect to some minimizer and that are not quasar-convex with respect to any other minimizer (see Example \ref{exam:noncq_for_all}).
 \end{itemize}
\end{remark}

\subsection{Convergence rate analysis}
We now analyze the asymptotic behavior of HiPPA in the strongly quasar-convex regime, namely when $\gamma>0$. In this setting, the minimizer is unique, and the geometry induced by the power $p$ leads to different local estimates depending on whether $p<2$, $p=2$, or $p>2$.

\begin{theorem}[{\bf Convergence rates of the sequence of iterations: $\gamma>0$}]\label{th:con:rate}
 Let $h: \R^n \to \Rinf$ be a proper, lsc, and $(\kappa,\gamma)$-strongly quasar-convex function with respect to its unique minimizer $\ov{x}\in\argmin{\R^n} h$, for $\kappa \in (0,1]$ and $\gamma>0$. Let $\{x^{k}\}_{k\in \Nz}$ be generated by Algorithm \ref{ppa:sqcx:2}, and assume that relation \eqref{param:cond:2} holds. Then the following assertions hold:
\begin{enumerate}[label=(\textbf{\alph*}), font=\normalfont\bfseries, leftmargin=0.7cm]
  \item\label{th:con:rate:a} for $p \in (1,2)$, the convergence rate is locally linear, i.e., there exist $\ov{k}_p \in \Nz$ and $\eta_p\in(0,1)$ such that
\begin{equation}\label{locally:linear:0-1}
\frac{\|x^{k+1}-\ov{x}\|}{\|x^k-\ov{x}\|} \le \eta_p,
\qquad \forall ~ k\ge \ov{k}_p.
\end{equation}


\item\label{th:con:rate:b} for $p=2$, the convergence rate is linear, i.e.,
\begin{equation}\label{conv:rate}
 \frac{\lVert x^{k+1} - \ov{x} \rVert}{\Vert x^{k} - \ov{x} \Vert} \leq \frac{1}{\sqrt{1 + \kappa \beta^{\prime} \gamma + (\frac{\kappa^{2} \beta^{\prime} \gamma}{2 - \kappa})}}, \qquad \forall ~ k \in \Nz.
\end{equation}

\item\label{th:con:rate:c} for $p> 2$, the convergence rate is superlinear, i.e.,
\begin{equation}\label{eq:bound:1}
 \frac{\lVert x^{k+1} - \ov{x} \rVert}{\Vert x^{k} - \ov{x} \Vert^{p-1}} \leq c, \qquad \forall ~ k \in \Nz,
\end{equation}
with $c:=\frac{2-\kappa}{\beta'(\kappa\gamma)}$.
\end{enumerate}
\end{theorem}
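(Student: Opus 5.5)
The common starting point for all three cases is the key inequality from the proof of Theorem~\ref{basic:prop:hippa}~\ref{basic:prop:hippa:c}, rederived with the strong term kept. Take $x=\lambda\ov x+(1-\lambda)x^{k+1}$ as a competitor in the prox subproblem and use $(\kappa,\gamma)$-strong quasar-convexity. Divide by $\lambda$ and let $\lambda\downarrow0$. This gives
\[
\beta_k\kappa\bigl(h(x^{k+1})-h(\ov x)\bigr)+\frac{\beta_k\kappa\gamma}{2}\|x^{k+1}-\ov x\|^2\le \|x^{k+1}-x^k\|^{p-2}\langle x^{k+1}-x^k,\ov x-x^{k+1}\rangle .
\]
Next I apply the quadratic growth \eqref{qwc:sconvex}, $h(x^{k+1})-h(\ov x)\ge \frac{\kappa\gamma}{2(2-\kappa)}\|x^{k+1}-\ov x\|^2$. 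The left side is then at least $\frac{\beta_k\kappa\gamma}{2}\bigl(1+\frac{\kappa}{2-\kappa}\bigr)\|x^{k+1}-\ov x\|^2=\frac{\beta_k\kappa\gamma}{2-\kappa}\|x^{k+1}-\ov x\|^2$. Write $d_k:=x^{k+1}-x^k$ and $e_k:=x^k-\ov x$. The master inequality is
\[
\frac{\beta'\kappa\gamma}{2-\kappa}\|e_{k+1}\|^2\le -\|d_k\|^{p-2}\langle d_k,e_{k+1}\rangle .
\]

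For $p=2$, I expand the right side with the identity \eqref{3:points}: $-2\langle d_k,e_{k+1}\rangle=\|e_k\|^2-\|e_{k+1}\|^2-\|d_k\|^2\le \|e_k\|^2-\|e_{k+1}\|^2$. Here I should instead keep the form before growth was applied, splitting the left side as $\beta'\kappa\gamma\|e_{k+1}\|^2\cdot$(terms). This yields $(1+\beta'\kappa\gamma+\frac{\kappa^2\beta'\gamma}{2-\kappa})\|e_{k+1}\|^2\le\|e_k\|^2$, which is exactly \eqref{conv:rate}; I would track constants to match the stated form. The $\|d_k\|^2$ term is simply dropped.

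For $p>2$, Cauchy--Schwarz gives $\frac{\beta'\kappa\gamma}{2-\kappa}\|e_{k+1}\|^2\le\|d_k\|^{p-1}\|e_{k+1}\|$, so $\|e_{k+1}\|\le c\|d_k\|^{p-1}$. I still need $\|d_k\|\le\|e_k\|$. The identity shows $\langle d_k,e_{k+1}\rangle\le0$, and then $\|e_k\|^2=\|e_{k+1}\|^2+\|d_k\|^2-2\langle d_k,e_{k+1}\rangle\ge\|d_k\|^2$, which gives \eqref{eq:bound:1}. Superlinearity follows since $\|e_k\|\to0$ by Theorem~\ref{basic:prop:hippa}~\ref{basic:prop:hippa:e2}.

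For $p\in(1,2)$, the same steps give $\|e_{k+1}\|\le c\|d_k\|^{p-1}$, which is now useless because $p-1<1$. This case is the main obstacle, and I would use the optimality structure instead. Multiply the master inequality by $\|d_k\|^{2-p}$ and use $\langle d_k,e_{k+1}\rangle\le0$ together with \eqref{3:points}. This gives $\frac{2\beta'\kappa\gamma}{2-\kappa}\|d_k\|^{2-p}\|e_{k+1}\|^2\le\|e_k\|^2-\|e_{k+1}\|^2-\|d_k\|^2$. Two subcases follow. If $\|d_k\|\ge\delta\|e_k\|$ for a fixed small $\delta$, then the $-\|d_k\|^2$ term yields $\|e_{k+1}\|^2\le(1-\delta^2)\|e_k\|^2$. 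Otherwise $\|e_{k+1}\|\ge\|e_k\|-\|d_k\|\ge(1-\delta)\|e_k\|$, and the left side behaves like $\|d_k\|^{2-p}\|e_k\|^2$; combined with $\|e_{k+1}\|\le c\|d_k\|^{p-1}$ this forces $\|d_k\|\gtrsim\|e_k\|^{1/(p-1)}$, which is not proportional to $\|e_k\|$. The natural-looking route therefore fails, and I expect to need Fact~\ref{fact:norm:unif}~\ref{fact:norm:unif:b}. Its local strong monotonicity of $z\mapsto\|z\|^{p-2}z$ on a ball of radius $r$ (eventually $\|d_k\|,\|e_k\|\le r$ since $\|e_k\|$ is nonincreasing and converges to zero, which is where $\ov k_p$ comes from) should let me compare $\|d_k\|^{p-2}d_k$ with $e_{k+1}$ and obtain a contraction factor $\eta_p<1$ depending on $\kappa_p r^{p-2}$, $\beta'$, $\kappa$ and $\gamma$.
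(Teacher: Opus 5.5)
Your arguments for (b) and (c) are correct. For $p=2$ your master inequality and \eqref{3:points} give $\bigl(1+\frac{2\kappa\beta'\gamma}{2-\kappa}\bigr)\|e_{k+1}\|^2\le\|e_k\|^2$. Since $1+\frac{2\kappa\beta'\gamma}{2-\kappa}=1+\kappa\beta'\gamma+\frac{\kappa^2\beta'\gamma}{2-\kappa}$, this is exactly \eqref{conv:rate}, so no splitting ``before growth'' is needed; the paper simply cites an external result here.

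For $p>2$ you use Cauchy--Schwarz together with $\|d_k\|\le\|e_k\|$. The latter holds because the master inequality (not the identity) forces $\langle d_k,e_{k+1}\rangle\le 0$. This is more elementary than the paper's route, which invokes the uniform convexity of $\|\cdot\|^p$, discards the $\hat\sigma_p$-term anyway, and bounds $\|e_k\|^p-\|d_k\|^p$ via \eqref{eq:ineqp}. Both end with the same constant $c$. Your argument in fact gives $\|e_{k+1}\|\le c\|e_k\|^{p-1}$ for every $p>1$.

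Part (a), however, is a genuine gap: you only express the hope that Fact~\ref{fact:norm:unif}~\ref{fact:norm:unif:b} yields a contraction. It cannot be closed, because assertion (a) is false. Take the following data:
\begin{itemize}
\item $n=1$ and $h(x)=\frac{\gamma}{2}x^2$, which satisfies \eqref{eq:def:quasar} with equality for $\kappa=1$ and $\overline{x}=0$;
\item $\beta_k\equiv\beta$, $p\in(1,2)$ and $x^0>0$.
\end{itemize}
The subproblem is strictly convex, and its derivative is negative at $y=0$ and positive at $y=x^k$. Hence $x^{k+1}\in(0,x^k)$ is characterized by $\gamma x^{k+1}=\beta^{-1}(x^k-x^{k+1})^{p-1}$. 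Since $x^k\downarrow 0$,
\[
\frac{x^{k+1}}{x^k}=1-\frac{(\beta\gamma x^{k+1})^{1/(p-1)}}{x^k}\ge 1-(\beta\gamma)^{1/(p-1)}\,(x^k)^{\frac{2-p}{p-1}}\longrightarrow 1 .
\]
So no $\eta_p<1$ exists, and the convergence is only sublinear. This is precisely your second subcase. In this example your bound $\|e_{k+1}\|\le c\|d_k\|^{p-1}$ holds with equality, while $\|d_k\|\le(\beta\gamma\|e_k\|)^{1/(p-1)}=o(\|e_k\|)$.

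Fact~\ref{fact:norm:unif}~\ref{fact:norm:unif:b} cannot rescue this either. On $\mathbf{B}(0,r)$ it supplies only a quadratic term $\sigma_p\|e_{k+1}\|^2$ with $\sigma_p=\kappa_p r^{p-2}/2$. Inserting that into the paper's computation yields $\|e_{k+1}\|\le\frac{p}{\sigma_p}\|e_k\|^{p-1}$, which is not a contraction as $\|e_k\|\to 0$ because $p-1<1$.

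The paper's proof of (a) reaches $\eta_p=(p/\sigma_p)^{1/(p-1)}<1$ only by writing $\sigma_p\|\overline{x}-x^{k+1}\|^p$ where the quadratic term belongs. That inequality does not follow from Fact~\ref{fact:norm:unif}~\ref{fact:norm:unif:b}, and it is violated in the example above (let $\lambda\downarrow 0$ with $\sigma_p\ge 2$). The correct conclusion for $p\in(1,2)$ is a sublinear rate, not local linear convergence.
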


\begin{proof}
From the definition, for each $p>1$, we obtain
\begin{equation}\label{eq1:th:con:stqconvex}
 h (x^{k+1}) + \frac{1}{p \beta_{k}} \lVert x^{k+1} - x^{k} \rVert^{p}\leq h (x) + \frac{1}{p \beta_{k}} \lVert x - x^{k} \rVert^{p}, \qquad \forall ~ x \in \R^n.
\end{equation}
 For each $\lambda \in [0, 1]$, we have $x_\lambda := \lambda \ov{x} + (1-\lambda)x^{k+1} \in  \dom{h}$. Now, we prove the assertions. \\
\ref{th:con:rate:a}:
Let $p\in(1,2)$ be arbitrary. Without loss of generality, we may assume that $\ov{x}=0$. Indeed, defining $g(z):=h(\ov{x}+z)$, the function $g$ is $(\kappa,\gamma)$-strongly quasar-convex with respect to $0$ (see Proposition~\ref{Scaling}~\ref{Scaling:a2}), and the translated iterations satisfy the same recursion. Thus, it is enough to prove the result when $\ov{x}=0$.

Fix $r>0$ such that $r\le \left(\frac{\kappa_p}{4}\right)^{\frac{1}{2-p}}$, where $\kappa_p$ is introduced in \eqref{eq:def:kappa_p}. Since $x^k\to \ov{x}=0$ by Theorem~\ref{basic:prop:hippa}~\ref{basic:prop:hippa:e2} and $\|x^{k+1}-x^k\|\to 0$ by Theorem~\ref{basic:prop:hippa}~\ref{basic:prop:hippa:b}, there exists
$\ov{k}_p\in\Nz$ such that $x^k\in \mb(0, r)$ for all $ k\ge \ov{k}_p$.
Set $\sigma_p:=\frac{\kappa_p r^{p-2}}{2}>0$. Then, by Fact \ref{fact:norm:unif}~\ref{fact:norm:unif:b}, for every $k \ge \ov{k}_p$ and every $\lambda \in [0,1]$,
\begin{align*}
 h(x^{k+1})+\frac{1}{p\beta_k}\|x^{k+1}-x^k\|^p 
 & \le h(\lambda \ov{x}+(1-\lambda)x^{k+1}) + \frac{1}{p\beta_k} \|\lambda(\ov{x}-x^k) + (1-\lambda) (x^{k+1}-x^k)\|^p \\
 & \le \lambda\kappa h(\ov{x})+(1-\lambda \kappa) h(x^{k+1}) -\lambda\Bigl( 1-\frac{ \lambda}{2 - \kappa}\Bigr) \frac{\kappa\gamma}{2} \|x^{k+1}-\ov{x}\|^2 \\
 & \quad + \frac{\lambda}{p\beta_k}\|\ov{x}-x^k\|^p+\frac{1-\lambda}{p\beta_k}\|x^{k+1}-x^k\|^p - \frac{\lambda (1-\lambda) \sigma_p}{p\beta_k}\|\ov{x}-x^{k+1}\|^p.
\end{align*}
Since $\lambda \kappa \, h(\ov{x}) + (1-\lambda \kappa) h(x^{k+1})\leq h(x^{k+1})$, it follows that
\begin{align*}
 & \frac{\lambda}{p \beta_{k}} \lVert x^{k+1} - x^{k} \rVert^{p} \leq - \lambda \left( 1 - \frac{\lambda}{2 - \kappa} \right) \frac{\kappa \gamma}{2} \lVert x^{k+1} - \ov{x} \rVert^{2} + \frac{\lambda}{p \beta_{k}} \lVert \ov{x} - x^{k} \rVert^{p}
 - \frac{\lambda (1-\lambda)\sigma_p}{p \beta_{k}} \lVert \ov{x} - x^{k+1} \rVert^{p}. \notag
\end{align*} 
By dividing both sides by $\lambda$ and letting $\lambda\downarrow 0$, we get
\begin{equation}\label{eq2:th:con:stqconvexp2}
\frac{\kappa \gamma}{2}\lVert x^{k+1} - \ov{x} \rVert^{2}+\frac{\sigma_p}{p \beta_{k}}\lVert \ov{x} - x^{k+1} \rVert^{p}
   \leq  \frac{1}{p \beta_{k}} \lVert \ov{x} - x^{k} \rVert^{p} -\frac{1}{p \beta_{k}} \lVert x^{k+1} - x^{k} \rVert^{p}.
\end{equation} 
In light of \eqref{eq:ineqp}, we get
\[
\Vert x+y\Vert^p\leq \Vert x\Vert^p+p\Vert x+y\Vert^{p-2}\langle x+y, y\rangle.
\]
by setting $x=x^{k+1}-x^{k}$ and $y=\ov{x}-x^{k+1}$, we get
\[
\Vert \ov{x}-x^{k}\Vert^p\leq \Vert x^{k+1}-x^{k}\Vert^p +p\Vert \ov{x}-x^{k}\Vert^{p-1}\lVert\ov{x}-x^{k+1}\rVert.
\]
Combining this and \eqref{eq2:th:con:stqconvexp2} imply
\[
\frac{\kappa \gamma}{2}\lVert x^{k+1} - \ov{x} \rVert^{2}+\frac{\sigma_p}{p \beta_{k}}\lVert \ov{x} - x^{k+1} \rVert^{p}
   \leq  \frac{1}{\beta_{k}}\Vert \ov{x}-x^{k}\Vert^{p-1}\lVert\ov{x}-x^{k+1}\rVert.
\]
Dropping the nonnegative first term on the left, we obtain
\[
\frac{\sigma_p}{p\beta_k}\|\ov{x}-x^{k+1}\|^p
\le\frac{1}{\beta_k}\|\ov{x}-x^k\|^{p-1}\|\ov{x}-x^{k+1}\|,
\]
i.e.,
\[
\|\ov{x}-x^{k+1}\|\le\left(\frac{p}{\sigma_p}\right)^{\frac{1}{p-1}}\|\ov{x}-x^k\|.
\]
Since $r\le \left(\frac{\kappa_p}{4}\right)^{\frac{1}{2-p}}$, we have $\sigma_p=\frac{\kappa_p r^{p-2}}{2}\ge 2 > p$. Therefore, $\eta_p := \left(\frac{p}{\sigma_p} \right)^{\frac{1}{p-1}}\in(0,1)$, and
\[
\|x^{k+1}-\ov{x}\|\le \eta_p \|x^k-\ov{x}\|,
\qquad \forall ~ k\ge \ov{k}_p.
\]
This proves that the convergence is eventually locally linear. 
\\
\ref{th:con:rate:b}: See \cite[Theorem 18]{deBrito2025extending}.
\\
\ref{th:con:rate:c}:  Invoking Fact~\ref{fact:norm:unif}~\ref{fact:norm:unif:c} and \eqref{eq1:th:con:stqconvex}, for every $p>2$ and $\lambda \in [0,1]$, leads to
 \begin{align*}
  & h(x^{k+1}) + \frac{1}{p \beta_{k}} \lVert x^{k+1} - x^{k} \rVert^{p} \\
  & \leq h(\lambda \ov{x} + (1-\lambda) x^{k+1}) + \frac{1}{p \beta_{k}}
  \lVert \lambda (\ov{x} - x^{k}) + (1-\lambda) (x^{k+1} - x^{k}) \rVert^{p},
  \notag \\
  & \leq \lambda \kappa \, h(\ov{x}) + (1-\lambda \kappa) h(x^{k+1}) -
  \lambda \left( 1 - \frac{\lambda}{2 - \kappa} \right) \frac{\kappa \gamma}{2}
  \lVert x^{k+1} - \ov{x} \rVert^{2} + \frac{\lambda}{p \beta_{k}} 
  \lVert x^{k} - \ov{x} \rVert^{p} \notag \\
  & ~~~~ +  \frac{(1-\lambda)}{p \beta_{k}} \lVert x^{k+1}-x^{k} 
  \rVert^{p} - \frac{\lambda (1-\lambda)\hat{\sigma}_p}{p \beta_{k}} 
  \lVert x^{k+1} - \ov{x} \rVert^{p}, \notag
 \end{align*} 
 for $\hat{\sigma}_p := \left(\frac{1}{2}\right)^\frac{3p-2}{2}$, i.e.,
 \begin{align*}
  & \lambda \left( 1 - \frac{\lambda}{2 - \kappa} \right) \frac{\kappa \gamma
  }{2} \lVert x^{k+1} - \ov{x} \rVert^{2} + \frac{\lambda (1-\lambda)
  \hat{\sigma}_p}{p \beta_{k}} \lVert x^{k+1} - \ov{x} \rVert^{p} \\ 
  & \leq \lambda \kappa  (h(\ov{x}) - h(x^{k+1}) ) + \frac{\lambda}{p
  \beta_{k}} \lVert x^{k} - \ov{x} \rVert^{p} - \frac{\lambda}{p \beta_{k}} 
  \lVert x^{k+1} - x^{k} \rVert^{p}. 
 \end{align*}
By dividing both sides by $\lambda$ and letting $\lambda\downarrow 0$, we get
 \begin{align}
  \frac{\kappa \gamma}{2} \lVert x^{k+1} - \ov{x} \rVert^{2} + \frac{\hat{\sigma}_p}{p \beta_{k}} \lVert x^{k+1} - \ov{x} \rVert^{p} \leq & \, \kappa (h(\ov{x}) - h(x^{k+1}) ) + \frac{1}{p \beta_{k}} \lVert x^{k} - \ov{x} \rVert^{p} \notag \\
 & - \frac{1}{p \beta_{k}} \lVert x^{k+1} - x^{k} \rVert^{p}. \label{e:02}
 \end{align} 
 Combining this with \eqref{eq:ineqp} results in
\begin{align*}
 \Vert x^{k} - \ov{x} \Vert^{p} &=\Vert (x^{k} - x^{k+1}) - (\ov{x} - x^{k+1})
 \Vert^{p} \\ 
 & \leq \Vert x^{k+1} - x^{k} \Vert^{p} - p \Vert x^{k} - \ov{x} \Vert^{p-2}
 \langle x^{k} - \ov{x}, \ov{x} - x^{k+1} \rangle \\ 
 & =  \Vert x^{k+1} - x^{k} \Vert^p + p \Vert x^{k} - \ov{x} \Vert^{p-2} 
 \langle x^{k} - \ov{x}, x^{k+1} - \ov{x} \rangle \\ 
 & \leq \Vert x^{k+1}-x^{k} \Vert^{p} + p\Vert x^{k} - \ov{x} \Vert^{p-1} 
 \lVert x^{k+1} - \ov{x} \rVert.
\end{align*} 
It follows from this and \eqref{e:02} that
\[
 \frac{\kappa \gamma}{2} \lVert x^{k+1} - \ov{x} \rVert^{2} +
 \frac{\hat{\sigma}_p}{p \beta_{k}} \lVert x^{k+1} - \ov{x} \rVert^{p} 
 \leq \kappa (h(\ov{x}) - h(x^{k+1}) ) + \frac{1}{\beta_{k}} \Vert
 x^{k} - \ov{x} \Vert^{p-1} \lVert x^{k+1} - \ov{x} \rVert. 
\]
Using \eqref{qwc:sconvex} with $y=x^{k+1}$, we get $h(\ov{x}) - h(x^{k+1}) \leq - \frac{\kappa \gamma}{2(2 - \kappa)} \lVert x^{k+1} - \ov{x} \rVert^{2}$, i.e.,
\[
\left( \frac{\kappa^{2} \gamma}{2(2 - \kappa)} + \frac{\kappa \gamma}{2}
 \right) \lVert x^{k+1} - \ov{x} \rVert^{2} +
 \frac{\hat{\sigma}_p}{p \beta_{k}} \lVert x^{k+1} - \ov{x} \rVert^{p} 
 \leq \frac{1}{\beta_{k}} \Vert x^{k} - \ov{x} \Vert^{p-1} \lVert x^{k+1} - \ov{x} \rVert,
\]
leading to
\[
\left( \frac{\kappa^{2} \gamma}{2(2 - \kappa)} + \frac{\kappa \gamma}{2} \right) \lVert x^{k+1} - \ov{x} \rVert +
 \frac{\hat{\sigma}_p}{p \beta_{k}} \lVert x^{k+1} - \ov{x} \rVert^{p-1} 
 \leq \frac{1}{\beta_{k}} \Vert x^{k} - \ov{x} \Vert^{p-1},
\]
i.e.,
\[
\left( \frac{\kappa^{2} \gamma}{2(2 - \kappa)} + \frac{\kappa \gamma}{2} \right) \lVert x^{k+1} - \ov{x} \rVert 
 \leq \frac{1}{\beta_{k}} \Vert x^{k} - \ov{x} \Vert^{p-1}.
\]
This consequently ensures
\[
\frac{\lVert x^{k+1} - \ov{x} \rVert}{\Vert x^{k} - \ov{x} \Vert^{p-1}} 
 \leq \frac{2-\kappa}{\beta'(\kappa\gamma)} .
\]
Since $p>2$ and $x^{k} \rightarrow \ov{x}$ by Theorem~\ref{basic:prop:hippa}~\ref{basic:prop:hippa:e2}, it follows that $\{x^{k}\}_{k\in \Nz}$ converges superlinearly to the unique solution $\ov{x}$, which completes
the proof.
\end{proof}

Note that the convergence rate provided in Theorem~\ref{th:con:rate}~\ref{th:con:rate:c} is superlinear, which is
\[
\frac{\|x^{k+1}-\ov{x}\|}{\|x^k-\ov{x}\|}\to 0 \quad \text{as } k\to\infty.
\]
Moreover, if
\[
\|x^0-\ov{x}\|\le c^{-\frac{1}{p-2}},
\]
with $c:=\frac{2-\kappa}{\beta'(\kappa\gamma)}$, one obtains the monotonic decay
\[
\|x^{k+1}-\ov{x}\|\le \|x^k-\ov{x}\|,
\qquad \forall k\in\Nz.
\]

\begin{theorem}[{\bf Convergence rates for the sequence of function values: $\gamma>0$}]\label{th:convrate:fun_val}
 Let $h: \R^n \to \Rinf$ be a proper, lsc and $(\kappa,\gamma)$-strongly quasar-convex function 
 with respect to $\{\ov x\}=\argmin{\mathbb{R}^n} h$, where $\kappa \in (0,1]$ and $\gamma>0$.
 Let $\{x^{k}\}_{k\in \Nz}$ be the se\-quen\-ce generated by Algorithm \ref{ppa:sqcx:2} and suppose that relation \eqref{param:cond:2} holds. Then the following assertions hold:
\begin{enumerate}[label=(\textbf{\alph*}), font=\normalfont\bfseries, leftmargin=0.7cm]
\item \label{th:convrate:fun_val:p_1_2} 
If $p\in (1,2)$, then there exist $\ov{k}_p \in \Nz$ and $\eta_p\in(0,1)$ such that
for every $k\ge \ov{k}_p$,
\begin{equation}\label{eq:th:convrate:fun_val:p_1_2}
h(x^{k+1})-h(\ov x)\le \frac{1}{p\beta'}\,\eta_p^{\,p(k-\ov{k}_p)}\|x^0-\ov x\|^p.
\end{equation}


\item \label{th:convrate:fun_val:a} If $p=2$, then for every $k\in\Nz$,
\begin{equation}\label{eq:th:convrate:fun_val:a}
h(x^{k+1})-h(\ov x) \leq
\frac{1}{2\beta'}\,r^{2k}\,\|x^0-\ov x\|^2,
\end{equation}
for $r:=\frac{1}{\sqrt{1+\kappa\beta'\gamma+\frac{\kappa^2\beta'\gamma}{2-\kappa}}}\in(0,1)$.

\item\label{th:convrate:fun_val:b}  If $p>2$, define $c:=\frac{2-\kappa}{\beta'\kappa\gamma}$.
Then, for every $k\in\Nz$,
\begin{equation}\label{eq:th:convrate:fun_val:b}
h(x^{k+1})-h(\ov x)
\le
\frac{1}{p\beta'}\,c^{-p/(p-2)}
\Big(c^{1/(p-2)}\|x^0-\ov x\|\Big)^{p(p-1)^k}.
\end{equation}
\end{enumerate}
 \end{theorem}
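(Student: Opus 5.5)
The three bounds reduce to one function-value estimate plus the distance rates of Theorem~\ref{th:con:rate}. Testing the prox inequality \eqref{eq1:th:con:stqconvex} with $x=\ov x$ gives, for every $k\in\Nz$,
\[
h(x^{k+1})-h(\ov x)\le h(x^{k+1})-h(\ov x)+\frac{1}{p\beta_k}\|x^{k+1}-x^k\|^p\le \frac{1}{p\beta_k}\|x^k-\ov x\|^p\le \frac{1}{p\beta'}\|x^k-\ov x\|^p .
\]
Each case then follows by inserting the appropriate bound on $\|x^k-\ov x\|$.

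\textbf{Case $p\in(1,2)$.} Theorem~\ref{th:con:rate}~\ref{th:con:rate:a} gives $\|x^{k}-\ov x\|\le \eta_p^{\,k-\ov k_p}\|x^{\ov k_p}-\ov x\|$ for $k\ge \ov k_p$. Theorem~\ref{basic:prop:hippa}~\ref{basic:prop:hippa:c} says $\{\|x^k-\ov x\|\}$ is nonincreasing, so $\|x^{\ov k_p}-\ov x\|\le\|x^0-\ov x\|$. Raising to the power $p$ yields \eqref{eq:th:convrate:fun_val:p_1_2}.

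\textbf{Case $p=2$.} Iterating \eqref{conv:rate} gives $\|x^k-\ov x\|\le r^k\|x^0-\ov x\|$ for all $k$. Squaring and inserting this into the displayed estimate gives \eqref{eq:th:convrate:fun_val:a}.

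\textbf{Case $p>2$.} This needs an explicit solution of the recursion \eqref{eq:bound:1}, namely $e_{k+1}\le c\,e_k^{p-1}$ with $e_k:=\|x^k-\ov x\|$. Set $u_k:=c^{1/(p-2)}e_k$. Then
\[
u_{k+1}=c^{1/(p-2)}e_{k+1}\le c^{1+1/(p-2)}e_k^{p-1}=c^{(p-1)/(p-2)}e_k^{p-1}=u_k^{p-1}.
\]
By induction $u_k\le u_0^{(p-1)^k}$, that is,
\[
e_k\le c^{-1/(p-2)}\bigl(c^{1/(p-2)}\|x^0-\ov x\|\bigr)^{(p-1)^k}.
\]
Raising to the power $p$ and using the displayed estimate gives exactly \eqref{eq:th:convrate:fun_val:b}. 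The induction only uses monotonicity of $t\mapsto t^{p-1}$ on $[0,\infty)$, so it holds even if $u_0\ge1$; the bound is just uninformative in that case.

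The only step needing care is this change of variables, where the exponents of $c$ must match the stated formula. The other two cases are direct substitutions.
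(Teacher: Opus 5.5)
Your proof is correct and follows the paper's argument. You test the prox inequality at $\ov x$ to get $h(x^{k+1})-h(\ov x)\le \frac{1}{p\beta'}\|x^k-\ov x\|^p$, substitute the distance rates of Theorem~\ref{th:con:rate}, and for $p>2$ unroll the recursion via $u_k=c^{1/(p-2)}\|x^k-\ov x\|$, exactly as the paper does in the proof of Theorem~\ref{th:complexity}. Your observation that this induction needs no condition $u_0\le 1$ is also correct, which is why part (c) carries no initial-radius assumption.
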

 \begin{proof}
For every $k\in\Nz$, the inclusion $x^{k+1} \in \prox{h}{\beta_k}{p}(x^k)$ and $x=\ov x$ imply
\[
h(x^{k+1})+\frac{1}{p\beta_k}\|x^{k+1}-x^k\|^p
\le
h(\ov x)+\frac{1}{p\beta_k}\|x^k-\ov x\|^p,
\]
i.e.,
\begin{equation}\label{eq:basic-fval}
h(x^{k+1})-h(\ov x)\le \frac{1}{p\beta_k}\|x^k-\ov x\|^p
\le \frac{1}{p\beta'}\|x^k-\ov x\|^p.
\end{equation}
\ref{th:convrate:fun_val:p_1_2}
In light of Theorem~\ref{th:con:rate}~\ref{th:con:rate:a}, there exist $\ov{k}_p \in \Nz$ and $\eta_p\in(0,1)$ such that
\eqref{locally:linear:0-1} holds. Iterating this inequality yields
\[
\|x^k-\ov x\|\le \eta_p^{\,k-\ov{k}_p}\|x^{\ov{k}_p}-\ov x\|,\qquad \forall k\ge \ov{k}_p.
\]
Since $\{\|x^k-\ov x\|\}_{k\in\Nz}$ is nonincreasing by Theorem~\ref{basic:prop:hippa}~\ref{basic:prop:hippa:c}, we have
\[
\|x^{\ov{k}_p}-\ov x\|\le \|x^0-\ov x\|.
\]
Substituting into \eqref{eq:basic-fval}, we obtain \eqref{eq:th:convrate:fun_val:p_1_2}.
\\
\ref{th:convrate:fun_val:a}:  From Theorem~\ref{th:con:rate}~\ref{th:con:rate:b},
 for every $k\in\mathbb{N}_0$, \eqref{conv:rate} holds.
Substituting this into \eqref{eq:basic-fval}, we come to \eqref{eq:th:convrate:fun_val:a}. 
\\
\ref{th:convrate:fun_val:b}: From Theorem~\ref{th:con:rate}~\ref{th:con:rate:c},
\eqref{eq:bound:1} holds, which implies
\[
\|x^k-\ov x\|
\le
c^{-1/(p-2)}
\Big(c^{1/(p-2)}\|x^0-\ov x\|\Big)^{(p-1)^k},
\qquad \forall k\in\Nz.
\]
Substituting this into the inequality \eqref{eq:basic-fval} ensures \eqref{eq:th:convrate:fun_val:b}.
\end{proof}
In the next result, we investigate the convergence rates of the function value sequence generated by HiPPA for $\kappa$-quasar-convex functions; i.e., $\gamma=0$.

\begin{theorem}[{\bf Convergence rates for the sequence of function values: $\gamma=0$}]\label{th:conv_rate:fun}
Let $h: \mathbb{R}^n \to \overline{\mathbb{R}}$ be a proper, lsc and $\kappa$-quasar-convex function (i.e., $\gamma = 0$) with respect to every element of $\argmin{\mathbb{R}^{n}}\,h$. Suppose that relation \eqref{param:cond:2} holds. 
Let $\{x^{k}\}_{k\in \Nz}$ be generated by Algorithm \ref{ppa:sqcx:2}, and assume that $x^k\to\ov{x}\in\argmin{\mathbb{R}^{n}}\,h$. Then the following assertions hold:
\begin{enumerate}[label=(\textbf{\alph*}), font=\normalfont\bfseries, leftmargin=0.7cm]
 \item \label{th:conv_rate:fun:p_in_(1,2)}
  If $p\in(1,2)$, then for every $k\in\mathbb{N}$,
\begin{equation}\label{rate:1<p<2}
 h(x^{k})-h(\ov x) \leq \frac{ \|x^0-\ov{x}\|^{p} }{2^{p-1}\beta^{\prime}\kappa} \frac{1}{k^{p-1}}.
\end{equation}

\item \label{th:conv_rate:fun:p is 2}
If $p=2$, then for every $k\in\mathbb{N}$,
\begin{equation}\label{rate:p=2}
 h(x^{k})- h(\ov x) \le \frac{ \|x^0-\ov{x} \|^{2}}{2\kappa \beta'} \frac{1}{k}.
\end{equation}

\item \label{th:conv_rate:fun:p>2}
If $p>2$, then for every $k\in\mathbb{N}$,
\begin{equation}\label{rate:p>2}
 h(x^{k})- h(\ov x) \leq \frac{1}{\beta' \kappa p} \left(\frac{p-2}{p} \right)^{\frac{p-2}{2}} \frac{ \|x^{0} - \ov{x}\|^{p}}{k^{\frac{p}{2}}},
\end{equation}
\end{enumerate}
\end{theorem}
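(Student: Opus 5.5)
The plan is to derive, for each $p$, a one-step inequality of the form $\delta_{k+1}^{\,1/\alpha}\le C\,(a_k-a_{k+1})$. Here
\[
\delta_k:=h(x^k)-h(\ov x),\qquad a_k:=\|x^k-\ov x\|^2,\qquad t_k:=\|x^{k+1}-x^k\|,\qquad s_k:=a_k-a_{k+1}\ge 0 .
\]
Summing this inequality telescopes the right-hand side to at most $CR^2$, where $R:=\|x^0-\ov x\|$. Since $\{\delta_k\}$ is decreasing (Theorem~\ref{basic:prop:hippa}~\ref{basic:prop:hippa:a}), the sum $\sum_{j<k}\delta_{j+1}^{1/\alpha}$ is at least $k\,\delta_k^{1/\alpha}$, which gives $\delta_k\le (CR^2/k)^{\alpha}$.

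The starting point is inequality \eqref{eq:b-eq03} from the proof of Theorem~\ref{basic:prop:hippa}~\ref{basic:prop:hippa:c}. It is valid because $h$ is $\kappa$-quasar-convex with respect to $\ov x$. Using $\beta_k\ge\beta'$, it reads
\[
0\le 2\beta'\kappa\,\delta_{k+1}\le t_k^{\,p-2}\bigl(s_k-t_k^2\bigr).
\]
In particular $s_k\ge t_k^2$, and $a_k\le R^2$ by Fej\'er monotonicity.

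\emph{Case $p=2$.} The inequality reduces to $2\beta'\kappa\,\delta_{k+1}\le s_k$. Telescoping yields $2\beta'\kappa\,k\,\delta_k\le R^2$, which is \eqref{rate:p=2}.

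\emph{Case $p>2$.} I bound the right-hand side by its maximum over $t\ge 0$. The function $t\mapsto t^{p-2}(s-t^2)$ is maximized at $t^2=\frac{(p-2)s}{p}$, with maximal value $\frac{2}{p}\bigl(\frac{p-2}{p}\bigr)^{\frac{p-2}{2}}s^{p/2}$. Hence
\[
\delta_{k+1}\le C_p\,s_k^{p/2},\qquad C_p:=\frac{1}{\beta'\kappa p}\Bigl(\frac{p-2}{p}\Bigr)^{\frac{p-2}{2}} .
\]
Raising to the power $2/p$ and telescoping gives $k\,\delta_k^{2/p}\le C_p^{2/p}R^2$, which is exactly \eqref{rate:p>2}.

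\emph{Case $p\in(1,2)$.} This is the step I expect to be the obstacle. Since $p-2<0$, the factor $t_k^{p-2}$ blows up as $t_k\to0$, so I need a lower bound on $t_k$ in terms of $s_k$. I take it from the triangle inequality:
\[
s_k=\bigl(\sqrt{a_k}+\sqrt{a_{k+1}}\bigr)\bigl(\sqrt{a_k}-\sqrt{a_{k+1}}\bigr)\le 2R\,t_k .
\]
This gives $t_k^{p-2}\le (s_k/(2R))^{p-2}$. Dropping the term $-t_k^2$, I get
\[
2\beta'\kappa\,\delta_{k+1}\le (2R)^{2-p}s_k^{\,p-1},
\qquad\text{so}\qquad
\delta_{k+1}^{1/(p-1)}\le\Bigl(\tfrac{(2R)^{2-p}}{2\beta'\kappa}\Bigr)^{\frac1{p-1}}s_k .
\]
Telescoping gives $\delta_k\le \frac{(2R)^{2-p}}{2\beta'\kappa}\cdot\frac{R^{2(p-1)}}{k^{p-1}}=\frac{R^p}{2^{p-1}\beta'\kappa\,k^{p-1}}$, which is \eqref{rate:1<p<2}.

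One technical point needs care when $p<2$. If $t_k=0$, the algorithm has stopped at a minimizer, so $\delta$ vanishes from then on and the bounds hold trivially. Otherwise the division by $t_k$ is legitimate, so the estimate goes through in every case.
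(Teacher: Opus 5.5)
Your proof is correct and follows the paper's argument. Both start from \eqref{eq:b-eq03} with $\beta_k\ge\beta'$, use the triangle inequality and Fej\'er monotonicity to control $t_k^{p-2}$ when $p\in(1,2)$, maximize $t\mapsto t^{p-2}(s_k-t^2)$ when $p>2$, and telescope using the monotonicity of $h(x^k)$. The differences are cosmetic: for $p\in(1,2)$ you raise the one-step bound to the power $1/(p-1)$ before summing instead of applying Jensen's inequality afterwards (the constant is the same), you prove the case $p=2$ directly instead of citing an earlier result, and you handle the degenerate case $t_k=0$ explicitly.
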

\begin{proof}
We recall first from the proof of Theorem~\ref{basic:prop:hippa}~\ref{basic:prop:hippa:c} (see \eqref{eq:b-eq03}) that
\begin{align}\label{eq03}
2\beta^{\prime} \kappa (h(x^{k+1})-h(\ov{x})) 
\leq \lVert x^{k+1} - x^{k} \Vert^{p-2} \left( \lVert x^k - \ov{x} \rVert^2 - \lVert x^{k+1} - \ov{x} \rVert^2 - \lVert x^{k+1} -x^{k} \rVert^2 \right). 
\end{align}
Let us set
\[
\Delta_k:=\|x^k-\ov{x}\|^2-\|x^{k+1}-\ov{x}\|^2.
\]
Note that from Theorem~\ref{basic:prop:hippa}~\ref{basic:prop:hippa:c}, $\Delta_k\geq 0$ for all $k\in \Nz$.
\\
\ref{th:conv_rate:fun:p_in_(1,2)}: Since $p\in (1,2)$, we have $p-2<0$.
By the triangle inequality,
$$
\|x^{k+1}-x^k\|
\ge
\big|\|x^k-\ov x\|-\|x^{k+1}-\ov x\|\big|.
$$
Since the function $t\mapsto t^{p-2}$ is decreasing on $(0,+\infty)$, it follows from
\eqref{eq:b-eq03} and \eqref{eq03} that
\begin{equation}\label{eq07}
2\beta' \kappa \big(h(x^{k+1})-h(\ov x)\big)
\le
\big(\|x^k-\ov x\|-\|x^{k+1}-\ov x\|\big)^{p-2}\,\Delta_k.
\end{equation}
Applying the identity
$$
\Delta_k
=
\big(\|x^k-\ov x\|-\|x^{k+1}-\ov x\|\big)
\big(\|x^k-\ov x\|+\|x^{k+1}-\ov x\|\big),
$$
it can be concluded that
\begin{equation}\label{eq08}
2\beta^{\prime} \kappa \big(h(x^{k+1})-h(\ov x)\big)
\le
\big(\|x^k-\ov x\|+\|x^{k+1}-\ov x\|\big)^{2-p}
\Delta_k^{\,p-1}.
\end{equation}
Since $\{\|x^k-\ov x\|\}$ is nonincreasing, it follows that
$$
\|x^k-\ov x\|+\|x^{k+1}-\ov x\|
\leq
2\|x^0-\ov x\|.
$$
It follows from $2-p>0$ and the inequality \eqref{eq08} that
\begin{equation}\label{ineq_key}
h(x^{k+1})-h(\ov x)
\le
\frac{(2\|x^0-\ov x\|)^{2-p}}{2\beta'\kappa}\,
\Delta_k^{\,p-1}.
\end{equation}
Together with \eqref{ineq_key} and the fact that $\{h(x^k)\}_{k\in \mathbb{N}_0}$ is nonincreasing, this ensures
$$
k\big(h(x^k)-h(\ov x)\big)\le\frac{(2\|x^0-\ov x\|)^{2-p}}{2\beta'\kappa}
\sum_{j=0}^{k-1}\Delta_j^{\,p-1}.
$$
Since the function $t\mapsto t^{p-1}$ is concave on $[0,+\infty)$, Jensen's inequality ensures
$$
\dfrac{1}{k}\sum_{j=0}^{k-1}\Delta_j^{\,p-1}
\le\left(\dfrac{1}{k}\sum_{j=0}^{k-1}\Delta_j\right)^{p-1}.
$$
Using the telescoping sum,
$$
\sum_{j=0}^{k-1}\Delta_j=\|x^0-\ov x\|^2-\|x^k-\ov x\|^2\le\|x^0-\ov x\|^2,
$$
it can be concluded that
$$
h(x^k)-h(\ov x)\le\frac{(2\|x^0-\ov x\|)^{2-p}}{2\beta'\kappa}\,\|x^0-\ov x\|^{2(p-1)}
\frac{1}{k^{p-1}}.
$$
The inequalities \eqref{rate:1<p<2} and \eqref{compl:1<p<2} follow directly from the last inequality.
\\
\ref{th:conv_rate:fun:p is 2}: Follows from \cite[Proposition 25]{deBrito2025extending}.
\\
\ref{th:conv_rate:fun:p>2}: Let $d_k=x^{k+1}-x^k$. 
From \eqref{eq03}, we obtain
\begin{equation}\label{eq05}
2\beta^{\prime}\kappa\big(h(x^{k+1})-h(\ov{x})\big)
\leq
\|d_k\|^{p-2}\Delta_k-\|d_k\|^{p}.
\end{equation}
We note that the function $\psi_k(t):= \Delta_k t^{p-2}-t^{p}$ on $t\ge 0$. 
For $p>2$, a simple calculation shows that 
$t_k=\left(\dfrac{p-2}{p}\Delta_k\right)^{\frac{1}{2}}$ 
is the global maximizer of $\psi_k(t)$, and
$$
\maxt{t\ge 0}\psi_k(t)
=
\frac{2}{p}\Big(\frac{p-2}{p}\Big)^{\frac{p-2}{2}} \Delta_k^{p/2}.
$$
Applying this bound in \eqref{eq05} results in
$$
2\beta'\kappa\big(h(x^{k+1})-h(\ov{x})\big)
\leq 
\psi_k(\lVert d_k \rVert )
\leq 
\frac{2}{p}\Big(\frac{p-2}{p}\Big)^{\frac{p-2}{2}} \Delta_k^{p/2},
$$
i.e., 
$$
\big(h(x^{k+1})-h(\ov{x})\big)^{\frac{2}{p}}
\leq   
\left(\frac{1}{\beta'\kappa p}\right)^{\frac{2}{p}}
\Big(\frac{p-2}{p}\Big)^{\frac{p-2}{p}} 
\Delta_k.
$$
Similar to Assertion~\ref{th:conv:fun:p_in_(1,2)}, summing up and using telescoping property yield \eqref{rate:p>2}.
\end{proof}

\subsection{Complexity analysis}
In this section, we study the non-asymptotic behavior of the sequence generated by HiPPA by deriving iteration-complexity bounds for quasar-convex functions in both regimes $\gamma>0$ and $\gamma=0$.

Our first result establishes iteration complexity bounds in terms of the distance to the minimizer, distinguishing the regimes $p\in(1,2)$, $p=2$, and $p>2$.
\begin{theorem}[{\bf Complexity for the sequence of iterations: $\gamma>0$}]\label{th:complexity}
 Let $h: \R^n \to \Rinf$ be a proper, lsc and 
 $(\kappa,\gamma)$-strongly quasar-convex function 
 with respect to $\{\ov x\}=\argmin{\mathbb{R}^n} h$, where $\kappa \in (0,1]$ and $\gamma>0$.
  Let $\{x^{k}\}_{k\in \Nz}$ be the sequence generated by Algorithm \ref{ppa:sqcx:2} and suppose that relation \eqref{param:cond:2} holds. Then the following assertions hold:
 \begin{enumerate}[label=(\textbf{\alph*}), font=\normalfont\bfseries, leftmargin=0.7cm]
 \item\label{th:complexity:p_1_2}
 
If $p\in (1,2)$ and $x^0\in\mb(\ov{x},r)$ with $r\le \left(\frac{\kappa_p}{4}\right)^{\frac{1}{2-p}}$, where $\kappa_p$ is introduced in \eqref{eq:def:kappa_p}, then for a given $\varepsilon\in (0,1)$, the number of iterations to reach
$\Vert \ov{x} - x^k\Vert< \varepsilon$, denoted by $\mathcal{N}(\varepsilon)$, satisfies
\begin{equation}
\label{eq:complexity-bound}
\mathcal{N}(\varepsilon)\leq \left\lceil 1+\frac{\log(\varepsilon^{-1}) + \log\left(\Vert x^{0} - \ov{x}\Vert\right)}{\log(1/\eta_p)}\right\rceil,
\end{equation}
for $\eta_p= (\frac{2p}{\kappa_pr^{p-2}})^{\frac{1}{p-1}}<1$.
In particular, 
$\mathcal{N}(\varepsilon)=\mathcal{O}(\log(\varepsilon^{-1}))$.

\item\label{th:complexity:a} 
If $p=2$, then, for a given $\varepsilon\in (0,1)$, the number of iterations to reach 
$\Vert \ov{x} - x^k\Vert< \varepsilon$, denoted by $\mathcal{N}(\varepsilon)$, satisfies
\begin{equation}
\label{eq:complexity-bound:b}
\mathcal{N}(\varepsilon)\le \left\lceil 1+\frac{\log(\varepsilon^{-1})+\log(\Vert x^0-\ov{x}\Vert)}{\log(1/r)}\right\rceil,
\end{equation}
for $r:=\frac{1}{\sqrt{1+\kappa\beta'\gamma+\frac{\kappa^2\beta'\gamma}{2-\kappa}}}\in(0,1)$. In particular, 
$\mathcal{N}(\varepsilon)=\mathcal{O}(\log(\varepsilon^{-1}))$.

\item\label{th:complexity:b} If $p>2$, define $c:=\frac{2-\kappa}{\beta'\kappa\gamma}$ and assume that
\begin{equation}\label{eq:init-radius}  
\|x^0-\ov x\|\leq c^{-1/(p-2)}.
\end{equation}
Then, for every $k\in\mathbb{N}_0$,
\begin{equation}\label{eq:superlinear-rate-global}
\|x^k-\ov x\|\leq c^{-1/(p-2)}
\Big(c^{1/(p-2)}\|x^0-\ov x\|\Big)^{(p-1)^k}.
\end{equation}
Consequently, for a given $\varepsilon\in(0,c^{-1/(p-2)})$,  the number of iterations to reach $\Vert \ov{x} - x^k\Vert< \varepsilon$, denoted by $\mathcal{N}(\varepsilon)$, satisfies
\begin{equation}\label{eq:complexity-p>2}
\mathcal{N}(\varepsilon)\leq
\left\lceil 1+
\frac{
\log\!\left(
\frac{\log\!\big((c^{1/(p-2)}\varepsilon)^{-1}\big)}
{\log\!\big((c^{1/(p-2)}\|x^0-\ov x\|)^{-1}\big)}
\right)
}{
\log(p-1)
}
\right\rceil.
\end{equation}
In particular, 
$\mathcal{N}(\varepsilon)=\mathcal{O}\big(\log\log(\varepsilon^{-1})\big)$.
 \end{enumerate}
\end{theorem}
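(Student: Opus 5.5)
The plan is to convert the one-step contraction estimates of Theorem~\ref{th:con:rate} into iteration counts by iterating them and solving for the first index at which the bound drops below $\varepsilon$. All three parts follow this pattern; the only real work is in part \ref{th:complexity:p_1_2}, where the contraction is local and must be shown to hold from the very first iteration.

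\textbf{Part \ref{th:complexity:p_1_2}.} I would reuse the argument of Theorem~\ref{th:con:rate}~\ref{th:con:rate:a}, but instead of waiting for an index $\ov{k}_p$, I would use the assumption $x^0\in\mb(\ov x,r)$. After translating so that $\ov x=0$ (Proposition~\ref{Scaling}~\ref{Scaling:a2}), Theorem~\ref{basic:prop:hippa}~\ref{basic:prop:hippa:c} gives $\|x^k\|\le\|x^0\|<r$ for all $k$. Hence every iterate stays in $\mb(0,r)$, and the strong-convexity estimate of Fact~\ref{fact:norm:unif}~\ref{fact:norm:unif:b} applies at every step, i.e.\ with $\ov{k}_p=0$.

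This is the step I expect to be the main obstacle. The convexity inequality is applied to points of the form $\lambda(\ov x-x^k)+(1-\lambda)(x^{k+1}-x^k)$, so the relevant vectors are differences $\ov x-x^k$ and $x^{k+1}-x^k$. These have norm at most $2r$, not $r$. I would handle this by applying Fact~\ref{fact:norm:unif}~\ref{fact:norm:unif:b} on the ball of radius $2r$. This changes $\sigma_p$ only by the harmless factor $2^{p-2}$, and it is consistent with the stated $\eta_p=(2p/(\kappa_p r^{p-2}))^{1/(p-1)}$, which corresponds to $\sigma_p=\kappa_p r^{p-2}/2$. I would then check that $r\le(\kappa_p/4)^{1/(2-p)}$ still forces $\eta_p<1$, as in the earlier proof.

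With that in place, iterating gives $\|x^k-\ov x\|\le\eta_p^k\|x^0-\ov x\|$. Requiring the right-hand side to be below $\varepsilon$ means $k>\bigl(\log\varepsilon^{-1}+\log\|x^0-\ov x\|\bigr)/\log(1/\eta_p)$. The ceiling of one plus this quantity gives \eqref{eq:complexity-bound}.

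\textbf{Part \ref{th:complexity:a}.} This follows directly from \eqref{conv:rate}, which holds globally for all $k$. It gives $\|x^k-\ov x\|\le r^k\|x^0-\ov x\|$, and the same logarithmic inversion as in part \ref{th:complexity:p_1_2} yields \eqref{eq:complexity-bound:b}.

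\textbf{Part \ref{th:complexity:b}.} Set $e_k:=c^{1/(p-2)}\|x^k-\ov x\|$. Multiplying \eqref{eq:bound:1} by $c^{1/(p-2)}$ gives $e_{k+1}\le c^{1+1/(p-2)}\|x^k-\ov x\|^{p-1}=e_k^{p-1}$. By induction, $e_k\le e_0^{(p-1)^k}$, which is exactly \eqref{eq:superlinear-rate-global}. The assumption \eqref{eq:init-radius} gives $e_0\le1$. If $e_0=1$ the count is vacuous, so I would assume $e_0<1$, which is implicit in the formula. Then $e_0^{(p-1)^k}<c^{1/(p-2)}\varepsilon$ holds once
\[
(p-1)^k\log(1/e_0)>\log\bigl((c^{1/(p-2)}\varepsilon)^{-1}\bigr),
\]
where the right-hand side is positive because $\varepsilon<c^{-1/(p-2)}$. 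Taking logarithms once more gives
\[
k>\frac{\log\Bigl(\log\bigl((c^{1/(p-2)}\varepsilon)^{-1}\bigr)\big/\log(1/e_0)\Bigr)}{\log(p-1)},
\]
which is \eqref{eq:complexity-p>2} and shows the $\mathcal{O}(\log\log(\varepsilon^{-1}))$ order.
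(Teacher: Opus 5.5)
\textbf{Parts (b) and (c) are fine, but part (a) has a gap that cannot be closed: assertion (a) is false as stated, and the paper's own proof has the same defect.}

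Your parts \ref{th:complexity:a} and \ref{th:complexity:b} follow the paper. You iterate \eqref{conv:rate}, respectively \eqref{eq:bound:1} after the rescaling $u_k=c^{1/(p-2)}\|x^k-\ov x\|$, and then invert the logarithms once or twice. Your remark that the case $c^{1/(p-2)}\|x^0-\ov x\|=1$ must be excluded is a fair point the paper glosses over.

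In part \ref{th:complexity:p_1_2} you correctly spotted something the paper's proof skips. The estimate from Fact~\ref{fact:norm:unif}~\ref{fact:norm:unif:b} is applied to $\ov x-x^k$ and $x^{k+1}-x^k$, and Fej\'er monotonicity only places the first of these in $\mb(0,r)$. However, your patch fails:
\begin{itemize}
\item On $\mb(0,2r)$ the constant becomes $\kappa_p(2r)^{p-2}/2=2^{p-2}\sigma_p$. This yields the strictly larger factor $\bigl(2^{3-p}p/(\kappa_p r^{p-2})\bigr)^{1/(p-1)}$, not the stated $\eta_p$.
\item The hypothesis $r\le(\kappa_p/4)^{1/(2-p)}$ only gives $2^{p-2}\sigma_p\ge 2^{p-1}$. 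Since $2^{p-1}<p$ for every $p\in(1,2)$, you cannot even conclude that this factor is below $1$.
\end{itemize}

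The deeper problem is that the inequality you reuse from the proof of Theorem~\ref{th:con:rate}~\ref{th:con:rate:a} is not what Fact~\ref{fact:norm:unif}~\ref{fact:norm:unif:b} gives. Strong convexity of $\|\cdot\|^p$ on a ball yields a remainder $\lambda(1-\lambda)\sigma_p\|\ov x-x^{k+1}\|^2$, with exponent $2$. The version with exponent $p$ used there is false: with $b=0$ and $\lambda=\tfrac12$ it would force $2^{-p}\|a\|^p\le(\tfrac12-\tfrac{\sigma_p}{4})\|a\|^p\le 0$, because $\sigma_p\ge 2$. With the correct remainder, the same chain of estimates only gives
\[
\|x^{k+1}-\ov x\|\le\frac{p}{\sigma_p+p\beta_k\kappa\gamma/2}\,\|x^k-\ov x\|^{p-1}.
\]
Since $p-1<1$, this is not a contraction near $\ov x$.

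This cannot be fixed, because the claim itself fails. Take $h(x)=\tfrac{\gamma}{2}x^2$ on $\R$, which is $(1,\gamma)$-strongly quasar-convex, with $\beta_k\equiv\beta$ and $0<x^0<r$.
\begin{itemize}
\item The iterates satisfy $x^{k+1}\in(0,x^k)$ and $\beta\gamma x^{k+1}=(x^k-x^{k+1})^{p-1}$.
\item Hence $x^k-x^{k+1}\le(\beta\gamma x^k)^{1/(p-1)}=o(x^k)$, so $x^{k+1}/x^k\to1$.
\item The decay is only of order $k^{-(p-1)/(2-p)}$, so the iteration count grows polynomially in $\varepsilon^{-1}$.
\item Consequently \eqref{eq:complexity-bound} is violated for small $\varepsilon$.
\end{itemize}
A warning sign was already visible: the claimed $\eta_p$ does not depend on $\beta'$, $\beta''$ or $\gamma$, yet for tiny $\beta''$ the method barely moves. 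The paper's proof of this part simply invokes that argument for all $k$ and inherits the same defect. For $p\in(1,2)$ one should expect a sublinear rate instead.
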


\begin{proof}
\ref{th:complexity:p_1_2} If $\Vert \ov{x} - x^0\Vert< \varepsilon$, the stopping criterion is already satisfied and the algorithm terminates at the initial iterate. Let us assume that
 $\Vert \ov{x} - x^0\Vert\geq \varepsilon$, which ensures that
 \begin{equation}\label{eq:th:complexity:non_noneq}
 \log(\varepsilon^{-1}) + \log\left(\Vert x^{0} - \ov{x}\Vert\right)\geq 0.
 \end{equation}
 Since the sequence starts in $\mb(\ov{x},r)$, from Theorem~\ref{basic:prop:hippa}\ref{basic:prop:hippa:c}, 
 $x^k\in \mb(\ov{x},r)$ for all $k\in \Nz$, i.e., for all $k\in\mathbb{N}$, the proof of Theorem~\ref{th:con:rate}~\ref{th:con:rate:a} yields
\[
\|x^{k+1}-\ov{x}\|\le \eta_p \|x^k-\ov{x}\|,
\]
for $\eta_p= (\frac{2p}{\kappa_pr^{p-2}})^{\frac{1}{p-1}}<1$.
Using the induction, this consequently implies
\begin{equation}\label{eq1:th:complexity:funval}
\Vert x^{k} - \ov{x}\Vert \leq \eta_p^{k}\Vert x^{0} - \ov{x}\Vert, \qquad \forall\, k\geq 0.
\end{equation}
Let $\mathcal{K}\in \Nz$ be the smallest integer such that
$ \eta_p^{\mathcal{K}}\Vert x^{0} - \ov{x}\Vert< \varepsilon$, which in turn implies $\Vert x^{\mathcal{K}} - \ov{x}\Vert <\varepsilon$, i.e., $\eta_p^{\mathcal{K}-1}\Vert x^{0} - \ov{x}\Vert\geq \varepsilon$. Taking logarithms,
\[
(\mathcal{K}-1)\log(\eta_p)\geq \log(\varepsilon) - \log\left(\Vert x^{0} - \ov{x}\Vert\right),
\]
i.e.,
\[
\mathcal{K}\leq 1+\frac{\log(\varepsilon^{-1}) + \log\left(\Vert x^{0} - \ov{x}\Vert\right)}{\log(1/\eta_p)}.
\]
Thus, the bound~\eqref{eq:complexity-bound} follows. 
\\
 \ref{th:complexity:a}: 
 Similar to the proof of Assertion~\ref{th:complexity:p_1_2}, if the algorithm does not terminate at the initial iterate, \eqref{eq:th:complexity:non_noneq} holds.  
 From Theorem~\ref{th:con:rate}~\ref{th:con:rate:b},
 $\lVert x^{k+1} - \ov{x} \rVert\leq r\Vert x^{k} - \ov{x} \Vert$.
 Consequently, for every $k\in\mathbb{N}_0$,
\begin{equation}\label{eq:linear-rate-global}
\|x^k-\ov x\|\leq r^k\|x^0-\ov x\|.
\end{equation}
Proceeding as above gives~\eqref{eq:complexity-bound:b}.
\\
\ref{th:complexity:b}: In light of Theorem~\ref{th:con:rate}~\ref{th:con:rate:c}, for every $k\in\mathbb{N}_0$, it follows that
\begin{equation}\label{eq:superlinear-rec}
\|x^{k+1}-\ov x\|\leq c\,\|x^k-\ov x\|^{p-1}.
\end{equation}
Let $e_k:=\|x^k-\ov x\|$ and define $u_k:=c^{1/(p-2)}e_k$. Then, the inequality \eqref{eq:superlinear-rec} becomes
\[
u_{k+1}\leq u_k^{\,p-1},
\qquad \forall k\in\mathbb{N}_0.
\]
By \eqref{eq:init-radius}, we have $u_0\leq 1$, i.e., by the induction, it follows that
\[
u_k\leq u_0^{(p-1)^k},
\qquad \forall k\in\mathbb{N}_0.
\]
Returning to $e_k$, we obtain \eqref{eq:superlinear-rate-global}.
Now, let $\varepsilon\in(0,c^{-1/(p-2)})$. In order to guarantee $e_k<\varepsilon$, it is enough that
\[
c^{-1/(p-2)}
\Big(c^{1/(p-2)}e_0\Big)^{(p-1)^k}<\varepsilon.
\]
Let $\mathcal{K}\in \Nz$ be the smallest integer such that the above inequality holds, i.e.,
\[
c^{-1/(p-2)}
\Big(c^{1/(p-2)}e_0\Big)^{(p-1)^{\mathcal{K}-1}}\geq \varepsilon.
\]
Since $0<c^{1/(p-2)}e_0\leq 1$ and $0<c^{1/(p-2)}\varepsilon<1$, this yields
\[
(p-1)^{\mathcal{K}-1}\leq \frac{\log\left(\left(c^{1/(p-2)}\varepsilon\right)^{-1}\right)}{\log\left(\left(c^{1/(p-2)}e_0\right)^{-1}\right)}.
\]
Since $p-1>1$, taking logarithms again gives~\eqref{eq:complexity-p>2}.
\end{proof}

\begin{theorem}[{\bf Complexity for the sequence of function values: $\gamma>0$}]
\label{thm:function-complexity-strong}
 Let $h: \R^n \to \Rinf$ be a proper, lsc and 
 $(\kappa,\gamma)$-strongly quasar-convex function 
 with respect to $\{\ov x\}=\argmin{\mathbb{R}^n} h$, where $\kappa \in (0,1]$ and $\gamma>0$.
 Let $\{x^{k}\}_{k\in \Nz}$ be the se\-quen\-ce generated by Algorithm \ref{ppa:sqcx:2} and suppose that relation \eqref{param:cond:2} holds. Then the following assertions hold:
\begin{enumerate}[label=(\textbf{\alph*}), font=\normalfont\bfseries, leftmargin=0.7cm]
\item \label{thm:function-complexity-strong:p_1_2} 
If $p\in (1,2)$ and $x^0\in\mb(\ov{x},r)$ with $r\le \left(\frac{\kappa_p}{4}\right)^{\frac{1}{2-p}}$, where $\kappa_p$ is introduced in \eqref{eq:def:kappa_p}, then for any $\varepsilon\in\Big(0,\frac{1}{p\beta'}\|x^0-\ov x\|^p\Big)$, the number of
iterations required to guarantee $h(x^k)-h(\ov x)<\varepsilon$, denoted by $\mathcal{N}_h(\varepsilon)$, satisfies
\begin{equation}\label{eq:fcomplexity-p1_2}
\mathcal{N}_h(\varepsilon)\leq \left\lceil 1+\frac{\log\!\big(\|x^0-\ov x\|^p/(p\beta'\varepsilon)\big)}{p\log(1/\eta_p)}
\right\rceil.
\end{equation}
In particular, $\mathcal{N}_h(\varepsilon)=\mathcal{O}\!\left(\log(\varepsilon^{-1})\right)$.

\item \label{thm:function-complexity-strong:a} If $p=2$, then for any 
$\varepsilon\in\Big(0,\frac{1}{2\beta'}\|x^0-\ov x\|^2\Big)$, the number of
iterations required to guarantee $h(x^k)-h(\ov x)<\varepsilon$, denoted by $\mathcal{N}_h(\varepsilon)$, satisfies
\begin{equation}\label{eq:fcomplexity-p2}
\mathcal{N}_h(\varepsilon)\leq \left\lceil 1+\frac{\log\!\big(\|x^0-\ov x\|^2/(2\beta'\varepsilon)\big)}{2\log(1/r)}
\right\rceil,
\end{equation}
with $r:=\frac{1}{\sqrt{1+\kappa\beta'\gamma+\frac{\kappa^2\beta'\gamma}{2-\kappa}}}\in(0,1)$.
In particular, $\mathcal{N}_h(\varepsilon)=\mathcal{O}\!\left(\log(\varepsilon^{-1}))\right)$.

\item\label{thm:function-complexity-strong:b}  If $p>2$, define $c:=\frac{2-\kappa}{\beta'\kappa\gamma}$, and assume, in addition, that \eqref{eq:init-radius} holds.
Then, for every $k\in\Nz$,
\begin{equation}
h(x^{k+1})-h(\ov x)
\le
\frac{1}{p\beta'}\,c^{-p/(p-2)}
\Big(c^{1/(p-2)}\|x^0-\ov x\|\Big)^{p(p-1)^k}.
\end{equation}
Consequently, for any $\varepsilon\in\Big(0,\frac{1}{p\beta'}\,c^{-p/(p-2)}\Big)$,
the number of iterations required to guarantee $h(x^k)-h(\ov x)<\varepsilon$, denoted by $\mathcal{N}_h(\varepsilon)$, satisfies
\begin{equation}\label{eq:fcomplexity-pgt2}
\mathcal{N}_h(\varepsilon)\leq \left\lceil 1+ \frac{\log\!\left(\frac{\log\!\big((p\beta'c^{p/(p-2)}\varepsilon)^{-1}\big)}{p\log\!\big((c^{1/(p-2)}\|x^0-\ov x\|)^{-1}\big)}
\right)}{\log(p-1)}\right\rceil.
\end{equation}
In particular, $\mathcal{N}_h(\varepsilon)=\mathcal{O}\big(\log\log(\varepsilon^{-1})\big)$.
\end{enumerate}
 \end{theorem}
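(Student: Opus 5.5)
The plan is to reduce everything to the estimate \eqref{eq:basic-fval} established in the proof of Theorem~\ref{th:convrate:fun_val}, namely $h(x^{k+1})-h(\ov x)\le \frac{1}{p\beta'}\|x^k-\ov x\|^p$. This is obtained by comparing the prox objective at $x^{k+1}$ with its value at $x=\ov x$. I then plug in the distance bounds from Theorem~\ref{th:complexity}, and invert the resulting explicit bound on $h(x^{k+1})-h(\ov x)$ exactly as in the proof of Theorem~\ref{th:complexity}. For each regime, I let $\mathcal K$ be the smallest index for which the explicit bound falls below $\varepsilon$, write the failing inequality at $\mathcal K-1$, and take logarithms.

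\textbf{Case $p\in(1,2)$.} Since $x^0\in\mb(\ov x,r)$, Fejér monotonicity (Theorem~\ref{basic:prop:hippa}~\ref{basic:prop:hippa:c}) keeps every iterate in the ball. So \eqref{eq1:th:complexity:funval} gives $\|x^k-\ov x\|\le \eta_p^k\|x^0-\ov x\|$ for all $k$. Hence
\[
h(x^{k+1})-h(\ov x)\le \frac{1}{p\beta'}\eta_p^{pk}\|x^0-\ov x\|^p .
\]
Requiring the right-hand side to be $<\varepsilon$ and solving gives $k\ge \log(\|x^0-\ov x\|^p/(p\beta'\varepsilon))/(p\log(1/\eta_p))$. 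The quantity inside the logarithm exceeds $1$ by the restriction on $\varepsilon$, so the bound is positive. Accounting for the index shift $k+1$ yields the additive $1$ and the ceiling in \eqref{eq:fcomplexity-p1_2}.

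\textbf{Case $p=2$.} The argument is identical, now using \eqref{eq:linear-rate-global}, $\|x^k-\ov x\|\le r^k\|x^0-\ov x\|$. Equivalently, I can use \eqref{eq:th:convrate:fun_val:a} directly; this gives \eqref{eq:fcomplexity-p2}.

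\textbf{Case $p>2$.} Under \eqref{eq:init-radius}, the bound \eqref{eq:superlinear-rate-global} inserted into \eqref{eq:basic-fval} produces the displayed estimate
\[
h(x^{k+1})-h(\ov x)\le \frac{1}{p\beta'}c^{-p/(p-2)}u_0^{p(p-1)^k}, \qquad u_0:=c^{1/(p-2)}\|x^0-\ov x\|\le 1 .
\]
To make this $<\varepsilon$, I rearrange to $u_0^{p(p-1)^k}<p\beta' c^{p/(p-2)}\varepsilon$. The right side lies in $(0,1)$ by the restriction on $\varepsilon$. Taking logarithms twice (note that $\log u_0<0$, which flips the inequality) gives
\[
(p-1)^k> \frac{\log\big((p\beta'c^{p/(p-2)}\varepsilon)^{-1}\big)}{p\log(u_0^{-1})}.
\]
A second logarithm divided by $\log(p-1)>0$, plus the index shift, gives \eqref{eq:fcomplexity-pgt2}.

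The only delicate points are sign bookkeeping when dividing by negative logarithms, and the degenerate case $u_0=1$. If $u_0=1$, the denominator vanishes; I would either treat it separately or note that one superlinear step already makes $u_1<1$ unless $x^0=\ov x$. Apart from this, the proof is a routine inversion.
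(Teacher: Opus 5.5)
Your proposal is correct and is essentially the paper's own proof. Both combine the one-step bound \eqref{eq:basic-fval}, $h(x^{k+1})-h(\ov x)\le \frac{1}{p\beta'}\|x^k-\ov x\|^p$, with the distance rates of Theorems~\ref{th:con:rate} and~\ref{th:complexity} (using Fej\'er monotonicity to keep the iterates in $\mb(\ov x,r)$ when $p\in(1,2)$), and then invert exactly as in the proof of Theorem~\ref{th:complexity}. Your bookkeeping is, if anything, more careful than the paper's, which tacitly assumes $c^{1/(p-2)}\|x^0-\ov x\|<1$. Two small justifications are still needed: writing ``$k\ge$'' after requiring ``$<\varepsilon$'' is fine because the prox comparison with $\ov x$ is strict whenever $x^{k+1}\neq x^k$ (and $x^{k+1}=x^k$ forces $x^k=\ov x$); and $u_1<1$ in the degenerate case $u_0=1$ comes from the strict form of the estimate in Theorem~\ref{th:con:rate}~\ref{th:con:rate:c}, where the discarded term $\frac{\hat{\sigma}_p}{p\beta_0}\|x^1-\ov x\|^{p-1}$ is positive when $x^1\neq\ov x$, not from $u_1\le u_0^{p-1}$ alone.
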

 \begin{proof}
\ref{thm:function-complexity-strong:p_1_2}
 Since the sequence starts in $\mb(\ov{x},r)$, from Theorem~\ref{basic:prop:hippa}\ref{basic:prop:hippa:c}, 
 $x^k\in \mb(\ov{x},r)$ for all $k\in \Nz$. Hence, from the proof of Theorem~\ref{th:con:rate}~\ref{th:con:rate:a}, for all $k\in\Nz$, $\|x^{k}-\ov{x}\|\le \eta_p^k \|x^0-\ov{x}\|$,
for $\eta_p= (\frac{2p}{\kappa_pr^{p-2}})^{\frac{1}{p-1}}<1$. 
As a result, \eqref{eq:th:convrate:fun_val:p_1_2} holds for all $k\in\Nz$.
If $\frac{1}{p\beta'}\,\eta_p^{pk}\|x^0-\ov x\|^p<\varepsilon$,
then $h(x^{k+1})-h(\ov x)<\varepsilon$. Solving for $k$ similar to the proof of Theorem~\ref{th:complexity}~\ref{th:complexity:p_1_2}, gives \eqref{eq:fcomplexity-p1_2}.
\\
\ref{thm:function-complexity-strong:a}:  From \eqref{eq:th:convrate:fun_val:a}, if
$\frac{1}{2\beta'}\,r^{2k}\|x^0-\ov x\|^2<\varepsilon$,
then $h(x^{k+1})-h(\ov x)<\varepsilon$. Solving for $k$ similar to the proof of Theorem~\ref{th:complexity}~\ref{th:complexity:a}, gives \eqref{eq:fcomplexity-p2}.
\\
\ref{thm:function-complexity-strong:b}: From \eqref{eq:th:convrate:fun_val:b}
and with a similar approach to the proof of Theorem~\ref{th:complexity}~\ref{th:complexity:b}, gives \eqref{eq:fcomplexity-pgt2}.
\end{proof}

The convergence of HiPPA for quasar-convex functions ($\gamma=0$) and every $p>1$ is established in Assertion~\ref{basic:prop:hippa:e1} of Theorem~\ref{basic:prop:hippa}. We next show the complexity counterparts.



\begin{theorem}[{\bf Complexity for the sequence of function values: $\gamma=0$}]\label{th:conv:fun}
Let $h: \mathbb{R}^n \to \overline{\mathbb{R}}$ be a proper, lsc and $\kappa$-quasar-convex function (i.e., $\gamma = 0$) with respect to every element of $\argmin{\mathbb{R}^{n}}\,h$. Suppose that relation \eqref{param:cond:2} holds. 
Let $\{x^{k}\}_{k\in \Nz}$ be generated by Algorithm \ref{ppa:sqcx:2}, and assume that $x^k\to\ov{x}\in\argmin{\mathbb{R}^{n}}\,h$. Then the following assertions hold:
\begin{enumerate}[label=(\textbf{\alph*}), font=\normalfont\bfseries, leftmargin=0.7cm]
 \item \label{th:conv:fun:p_in_(1,2)}
If $p\in(1,2)$, then the number of iterations required to obtain an iterate $x^k$ satisfying
$
h(x^{k})- h(\ov x)\le\varepsilon
$, denoted by $\mathcal{N}(\varepsilon)$, satisfies
\begin{equation}\label{compl:1<p<2}
\mathcal{N}(\varepsilon)\leq
\left\lceil 1+
\left( \frac{\|x_{0}-\ov{x}\| ^p}{2^{p-1}\beta^{\prime} \kappa} \right)^{\frac{1}{p-1}} \dfrac{1}{\varepsilon ^{\frac{1}{p-1}}}
\right\rceil.
\end{equation}
In particular, 
$\mathcal{N}(\varepsilon)=\mathcal{O}\big(\varepsilon^{-\frac{1}{p-1}}\big)$.

\item \label{th:conv:fun:p is 2}
If $p=2$, then the number of iterations required to obtain an iterate $x^k$ satisfying $ h(x^{k})-h(\ov x)\le\varepsilon$, denoted by $\mathcal{N}(\varepsilon)$, satisfies 
\begin{equation}\label{compl:p=2}
\mathcal{N}(\varepsilon)\leq
\left\lceil 1+
\frac{\|x^0-\ov{x}\|^{2}}{2 \kappa \beta^{\prime}\varepsilon}
\right\rceil.
\end{equation}
In particular, 
$\mathcal{N}(\varepsilon)=\mathcal{O}(\varepsilon^{-1})$.

\item \label{th:conv:fun:p>2}
If $p>2$, then the number of iterations required to obtain $x^k$ such that $h(x^k)-h(\ov x)\le \varepsilon$, denoted by $\mathcal{N}(\varepsilon)$, satisfies 
\begin{equation}\label{eq06}
\mathcal{N}(\varepsilon)\leq
\left\lceil 1+
\left( \frac{1}{\beta'\kappa p} \right)^{\frac{2}{p}} \left(\frac{p-2}{p} \right)^{\frac{p-2}{p}} \frac{\|x^{0} - \ov{x}\|^{2}}{\varepsilon^{\frac{2}{p}}}
\right\rceil.
\end{equation}
In particular, 
$\mathcal{N}(\varepsilon)= \mathcal{O}(\varepsilon^{-\frac{2}{p}})$.
\end{enumerate}
\end{theorem}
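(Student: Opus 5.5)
The plan is to read the three complexity bounds off the rate estimates of Theorem~\ref{th:conv_rate:fun}. The only extra ingredient is monotonicity of $\{h(x^k)\}_{k\in\Nz}$ from Theorem~\ref{basic:prop:hippa}~\ref{basic:prop:hippa:a}, which ensures that once the bound drops below $\varepsilon$ at an index $k$, the required accuracy has been reached. In each case the rate has the form
\[
h(x^k)-h(\ov x)\le \frac{A}{k^{\alpha}},\qquad k\in\mathbb{N},
\]
with $(A,\alpha)$ given as follows. For $p\in(1,2)$, $A=\frac{\|x^0-\ov x\|^p}{2^{p-1}\beta'\kappa}$ and $\alpha=p-1$. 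For $p=2$, $A=\frac{\|x^0-\ov x\|^2}{2\kappa\beta'}$ and $\alpha=1$. For $p>2$, $A=\frac{1}{\beta'\kappa p}\left(\frac{p-2}{p}\right)^{\frac{p-2}{2}}\|x^0-\ov x\|^p$ and $\alpha=p/2$.

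Given this form, it suffices to take $k\ge (A/\varepsilon)^{1/\alpha}$. Let $\mathcal{K}$ be the smallest positive integer with $A\mathcal{K}^{-\alpha}\le\varepsilon$. Then $\mathcal{K}\le 1+(A/\varepsilon)^{1/\alpha}$, because either $\mathcal{K}=1$ or $A(\mathcal{K}-1)^{-\alpha}>\varepsilon$. This gives $\mathcal{N}(\varepsilon)\le\lceil 1+(A/\varepsilon)^{1/\alpha}\rceil$.

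Plugging in the three cases:
\begin{itemize}
\item For $p\in(1,2)$, we get $(A/\varepsilon)^{1/(p-1)}$, which is exactly \eqref{compl:1<p<2}.
\item For $p=2$, we get $A/\varepsilon$, which is \eqref{compl:p=2}.
\item For $p>2$, raising to the power $2/p$ gives $A^{2/p}=\left(\frac{1}{\beta'\kappa p}\right)^{2/p}\left(\frac{p-2}{p}\right)^{\frac{p-2}{p}}\|x^0-\ov x\|^2$, which matches \eqref{eq06}.
\end{itemize}
The $\mathcal{O}(\cdot)$ statements are then immediate.

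The only delicate point is checking that the rate bounds hold for every $k\in\mathbb{N}$ and not just asymptotically. The proof of Theorem~\ref{th:conv_rate:fun} already gives this, via telescoping of $\Delta_k$ and the nonincreasing sequences $\{h(x^k)\}$ and $\{\|x^k-\ov x\|\}$. Indeed, in the $p>2$ case the telescoping yields $k\,(h(x^k)-h(\ov x))^{2/p}\le (\beta'\kappa p)^{-2/p}\left(\frac{p-2}{p}\right)^{\frac{p-2}{p}}\|x^0-\ov x\|^2$, and this inequality can be inverted directly to obtain \eqref{eq06}. Beyond this, the argument involves only constant bookkeeping.
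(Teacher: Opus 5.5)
Your proposal is correct and takes the same route as the paper, whose proof says only that the bounds are straightforward consequences of the rate estimates in Theorem~\ref{th:conv_rate:fun}. Your minimal-index inversion of $h(x^k)-h(\ov x)\le A k^{-\alpha}$, with the constants $A^{1/\alpha}$ you compute, supplies exactly the bookkeeping the paper omits and reproduces all three stated bounds; invoking monotonicity of $\{h(x^k)\}$ is harmless but not needed here, since the rate bound already holds at the index $\mathcal{K}$ itself.
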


\begin{proof}
The results are straightforward from Theorem~\ref{th:conv_rate:fun}.
\end{proof}

\begin{remark}
 Even in the absence of strong quasar-convexity, i.e., when $\gamma=0$, Theorem~\ref{th:conv:fun} shows that different values of power $p$ lead to different iteration complexity bounds. The complexity bounds obtained in Theorem \ref{th:conv:fun} can be compared with the classical case with $p=2$. In this case, even for convex functions, PPA attains the iteration complexity $\mathcal{O}(\varepsilon^{-1})$, a rate that has been extended to quasar-convex functions (see \cite[Proposition~25$(d)$]{deBrito2025extending}).

 When $p\in(1,2)$, the resulting complexity $\mathcal{O} (\varepsilon^{-1/(p-1)})$ is slower than the classical $\mathcal{O} (\varepsilon^{-1})$ rate. When $p>2$, the HiPPA method attains the complexity $\mathcal{O} (\varepsilon^{-2/p})$, which is faster than $\mathcal{O} (\varepsilon^{-1})$.
\end{remark}

\section{Numerical experiments}\label{sec:glm_numerical}
In this section, we implement HiPPA and report our numerical results on two prominent applications: (i) Generalized linear model; (ii) Nonconvex robust multi-task regression.
All experiments are conducted in Python on a laptop equipped with a 12th Gen Intel$\circledR$ Core$^{\text{TM}}$ i7-12800H CPU (1.80 GHz) and 16 GB of RAM.

\subsection{Application to generalized linear models}
We first recall the ReLU generalized linear model (GLM) setting and the strong quasar-convexity result that motivates our benchmark. We then present numerical experiments comparing HiPPA with projected first-order methods on the corresponding population loss.

Let $\{(x_i,y_i)\}_{i=1}^N$ be data points, where $x_i\in \R^n$ are sampled i.i.d.\ from a distribution $D_x$, and the responses are generated according to the generalized linear model
\begin{equation}
\label{eq:glm_model}
y_i=\sigma(\langle w_t^\ast,x_i\rangle)+\varepsilon_i,
\end{equation}
where $\sigma:\R\to\R$ is a known monotone activation function, $\varepsilon_i\in\R$ are i.i.d.\ mean-zero noises independent of $x_i$, and $w_t^\ast\in\R^n$ is the unknown ground-truth vector. Denote by $D$ the joint distribution of $(x,y)$.
The empirical squared risk is
\begin{equation}
\label{eq:empirical_risk}
\widehat f(w)=\frac{1}{2N}\sum_{i=1}^N\bigl(y_i-\sigma(\langle w,x_i\rangle)\bigr)^2,
\end{equation}
and the corresponding population risk is
\begin{equation}
\label{eq:population_risk}
f(w)=\frac{1}{2}\mathbb{E}_{(x,y)\sim D}\Bigl[\bigl(y-\sigma(\langle w,x\rangle)\bigr)^2\Bigr].
\end{equation}
We consider the ReLU activation $\sigma(z)=\max\{0,z\}$.
We recall the structural assumption used in \cite{Pun2024}.

\begin{assumption}\label{ass:relu_density}
For every $t=1,\dots,T$ and every $w\neq w_t^\ast$, let $\mathcal P_{w,w_t^\ast}$ denote the marginal distribution of $x$ on the subspace spanned by $w$ and $w_t^\ast$, viewed as a distribution on $\R^2$. Assume that $\mathcal P_{w,w_t^\ast}$ admits a density $p_{w,w_t^\ast}$ such that
$\inf_{\|x\|\le \varepsilon} p_{w,w_t^\ast}(x)\ge \beta$,
for some $\varepsilon>0$ and $\beta>0$.
\end{assumption}

The following result is the main justification for the benchmark used below.
\begin{proposition}[{\cite[Corollary~3]{Pun2024}}]\label{prop:relu_strong_quasar}
Assume Assumption~\ref{ass:relu_density}. Suppose $\|x\|\le c$ almost surely for $x\sim \mathcal P$, with $c\ge \tfrac12$. For $t=1,\dots,T$, define
\begin{equation}\label{eq:Rt_def}
\mathcal R_t = \left\{w\in\R^n: \|w-w_t^\ast\|^2\le \|w_t^\ast\|^2, \ \|w\|\le 2\|w_t^\ast\|\right\}.
\end{equation}
Then $f_t$ is $(\rho,\mu)$-strongly quasar-convex with respect to $w_t^\ast$ over $\mathcal R_t$, for
$\rho=\frac{\varepsilon^4\bs\sin^3(\pi/8)}{8\sqrt{2}\,c}$ and $\mu=c$.
\end{proposition}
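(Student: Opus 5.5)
We would prove this through the differentiable first-order characterization of Fact~\ref{fact:char:diff}, restricted to $\mathcal R_t$. Concretely, it suffices to show that for every $w\in\mathcal R_t$ at which $f_t$ is differentiable (and, by Theorem~\ref{thm:char:clarke}, for every Clarke subgradient otherwise)
\[
\langle \nabla f_t(w), w-w_t^\ast\rangle \;\ge\; \rho\bigl(f_t(w)-f_t(w_t^\ast)\bigr)+\frac{\rho\mu}{2}\|w-w_t^\ast\|^2 .
\]
Because the noise has mean zero and is independent of $x$, we have
\[
f_t(w)-f_t(w_t^\ast)=\tfrac12\mathbb E\bigl[(\sigma(\langle w,x\rangle)-\sigma(\langle w_t^\ast,x\rangle))^2\bigr],
\qquad
\nabla f_t(w)=\mathbb E\bigl[(\sigma(\langle w,x\rangle)-\sigma(\langle w_t^\ast,x\rangle))\mathbf 1\{\langle w,x\rangle>0\}x\bigr].
\]
The ReLU kink is a measure-zero event under the density in Assumption~\ref{ass:relu_density}, so the gradient formula is justified by dominated convergence using $\|x\|\le c$.

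\textbf{Upper bound on the excess risk.} Since $\sigma$ is $1$-Lipschitz and $\|x\|\le c$,
\[
f_t(w)-f_t(w_t^\ast)\le \tfrac{c^2}{2}\|w-w_t^\ast\|^2 .
\]
This reduces the task to a restricted secant-type inequality
\[
\langle\nabla f_t(w),w-w_t^\ast\rangle\ge \nu\|w-w_t^\ast\|^2,
\]
with $\nu$ large enough to absorb both terms on the right-hand side for the stated $\rho$ and $\mu$.

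\textbf{Lower bound on the inner product (the main obstacle).} Write $a=\langle w,x\rangle$ and $b=\langle w_t^\ast,x\rangle$. The integrand is $(\sigma(a)-\sigma(b))\mathbf 1\{a>0\}(a-b)$, which is nonnegative pointwise: on $\{a>0,b>0\}$ it equals $(a-b)^2$, and on $\{a>0,b\le0\}$ it equals $a(a-b)\ge a^2\ge0$. We would therefore discard everything except the region $\{a>0,b>0,\|x_{\mathrm{proj}}\|\le\varepsilon\}$, where $x_{\mathrm{proj}}$ denotes the projection of $x$ onto $\mathrm{span}\{w,w_t^\ast\}$. On that region the density is at least $\beta$, so the problem becomes a two-dimensional integral over a circular sector.

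The constraints defining $\mathcal R_t$ do the geometric work. The condition $\|w-w_t^\ast\|\le\|w_t^\ast\|$ forces $\langle w,w_t^\ast\rangle\ge\|w\|^2/2\ge0$, so the angle between $w$ and $w_t^\ast$ is at most $\pi/2$. Hence the cone $\{a>0,b>0\}$ contains a sector of angle at least $\pi/4$ around the bisector direction. Integrating $(a-b)^2=\langle w-w_t^\ast,x\rangle^2$ in polar coordinates over a radius-$\varepsilon$ sector of angle $\pi/4$ gives a bound of the form $\gtrsim \beta\,\varepsilon^4\sin^3(\pi/8)\|w-w_t^\ast\|^2$. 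The factor $\varepsilon^4$ arises from $r^3\,dr$, and $\sin^3(\pi/8)$ comes from bounding the angular factor; this is the step where the constant in $\rho$ originates.

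The hard part is that $w-w_t^\ast$ may be nearly orthogonal to parts of the sector. We would handle this by splitting into two subsectors of angle $\pi/8$ on either side of the bisector and lower-bounding $|\cos|$ of the angle to $w-w_t^\ast$ on at least one of them by $\sin(\pi/8)$.

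\textbf{Combining the bounds.} The two bounds give
\[
\langle\nabla f_t(w),w-w_t^\ast\rangle\ge \nu\|w-w_t^\ast\|^2,\qquad \nu\asymp\varepsilon^4\sin^3(\pi/8).
\]
Using $f_t(w)-f_t(w_t^\ast)\le \tfrac{c^2}{2}\|w-w_t^\ast\|^2$, we split $\nu\|w-w_t^\ast\|^2$ into two halves, dominating $\rho(f_t-f_t^\ast)$ and $\tfrac{\rho\mu}{2}\|w-w_t^\ast\|^2$ respectively. With $\mu=c$ and $c\ge\tfrac12$ this yields $\rho=\varepsilon^4\sin^3(\pi/8)/(8\sqrt2\,c)$, the $\sqrt2$ absorbing the leftover constants (we would track $\beta$ here, presumably normalized by the cited source). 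Finally we would note that the characterization only needs to hold for $w\in\mathcal R_t$, which is what "over $\mathcal R_t$" means in the statement.
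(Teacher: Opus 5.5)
The paper gives no proof of this statement; it imports it from \cite{Pun2024}. Your route is the natural one: a restricted secant inequality $\langle\nabla f_t(w),w-w_t^\ast\rangle\ge\nu\|w-w_t^\ast\|^2$ from a two-dimensional sector integral, combined with a quadratic upper bound on the excess risk. The gap is in the final combination step, which does not give the stated constants, and the mismatch is not something the $\sqrt2$ can absorb.

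With your bound $f_t(w)-f_t(w_t^\ast)\le\frac{c^2}{2}\|w-w_t^\ast\|^2$, the target inequality follows from the secant bound only if $\rho(c^2+\mu)/2\le\nu$. With $\mu=c$ this forces $\rho\le 2\nu/(c^2+c)$, which is of order $\nu/c^2$, not $\nu/c$. The hypothesis $c\ge\frac12$ does not help.

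This is not an artifact of a crude estimate: under $\|x\|\le c$ alone the claim fails for large $c$. Take $w\in\mathcal R_t$ with $\langle w,w_t^\ast-w\rangle=0$, and place a fixed fraction $m$ of the mass near $c\,(w_t^\ast-w)/\|w_t^\ast-w\|$, on the side $\langle w,x\rangle<0$, with the rest of the mass as in the next paragraph. This mass adds nothing to $\langle\nabla f_t(w),w-w_t^\ast\rangle$ but adds about $\frac{mc^2}{2}\|w-w_t^\ast\|^2$ to $f_t(w)-f_t(w_t^\ast)$.

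The stated pair $(\rho,\mu)=(\nu/c,c)$ is exactly what your halving gives when $f_t(w)-f_t(w_t^\ast)\le\frac{c}{2}\|w-w_t^\ast\|^2$. That holds under $\|x\|^2\le c$ almost surely, which is the normalization the paper itself uses in its experiments.

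Similarly, $\beta$ cannot be ``normalized away.'' Your sector integral gives $\nu$ proportional to $\beta\varepsilon^4$, and this is unavoidable. The left-hand side is at most $\mathbb{E}\langle w-w_t^\ast,x\rangle^2$. Suppose, say in $\mathbb{R}^2$, that $x$ has density $\beta$ on the $\varepsilon$-disk and the remaining mass lies in a tiny ball around the origin. Then that bound is of order $\beta\varepsilon^4\|w-w_t^\ast\|^2$, whereas the stated $\rho\mu=\frac{\varepsilon^4\sin^3(\pi/8)}{8\sqrt2}$ does not depend on $\beta$. What your argument can establish is the statement with $\|x\|^2\le c$ and with $\beta$ inserted in $\rho$, namely $\rho=\nu/c$ and $\mu=c$.

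There are two smaller problems.

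\emph{The sector step as written fails.} Suppose $\|w\|=\|w_t^\ast\|$, which is admissible in $\mathcal R_t$ whenever the angle between $w$ and $w_t^\ast$ is at most $\pi/3$. Then $w-w_t^\ast$ is orthogonal to the bisector. The cosine you want to bound vanishes on the bisector and stays below $\sin(\pi/8)$ on the interiors of both subsectors $[-\pi/8,0]$ and $[0,\pi/8]$. This is easily repaired: the cone $\{a>0,\,b>0\}$ has opening $\alpha\ge\pi/2$, and
$\int_{-\alpha/2}^{\alpha/2}\cos^2(\phi-\psi)\,d\phi=\frac{\alpha+\sin\alpha\cos 2\psi}{2}\ge\frac{\alpha-\sin\alpha}{2}>0$
uniformly in the direction $\psi$ of $w-w_t^\ast$.

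\emph{The Clarke clause of your reduction cannot hold at $w=0\in\mathcal R_t$.} There $0\in\partial^C f_t(0)$. The reason is that $w\mapsto\frac12\mathbb{E}[\sigma(\langle w,x\rangle)^2]$ is $C^1$ with zero gradient at $0$, while $w\mapsto\mathbb{E}[\sigma(\langle w,x\rangle)\sigma(\langle w_t^\ast,x\rangle)]$ is convex and minimized at $0$. You do not need this clause. It suffices to have the gradient inequality at $w\ne0$, where $f_t$ is differentiable, and then integrate along segments toward $w_t^\ast$ as in the converse part of the proof of Theorem~\ref{thm:char:clarke}. On $\mathcal R_t$ we have $\langle w,w_t^\ast\rangle\ge\|w\|^2/2$, so such a segment meets $0$ at most at its starting point.
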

We optimize the population objective
\begin{equation}\label{eq:population_relu_experiment}
f(w)=E_{x\sim \mathcal P}\left[\frac{1}{2}\bigl(\ReLU(\langle w,x\rangle)-\ReLU(\langle w_t^\ast,x\rangle)\bigr)^2\right],
\end{equation}
where model is noiseless, namely $y(x)=\ReLU(\langle w^\ast,x\rangle)$.
The distribution $\mathcal P$ is chosen as the uniform distribution on the Euclidean ball
$\mb(0,\sqrt{c})$,
i.e., $\|x\|^2\le c$ almost surely, and $\mathcal P$ is absolutely continuous. This choice is compatible with the assumptions underlying Proposition~\ref{prop:relu_strong_quasar}. Moreover, after every algorithmic update, we project the iterate onto the region $\mathcal{R}$ in \eqref{eq:Rt_def}, so that all iterations remain in the theorem-relevant set.

In our implementation, we use the default parameters $n=100$ and $c=4$. The ground-truth vector is generated by drawing $\widetilde w^\ast\sim \mathcal N(0,I_n)$ and setting $w^\ast=2\,\frac{\widetilde w^\ast}{\|\widetilde w^\ast\|}$.
Thus $\|w^\ast\|=2$. 
The initial point is chosen as $w^0=w^\ast + \eta \|w^\ast\| d$
with $\eta=0.3$, where $d$ is a random unit vector. The point $w^0$ is then projected onto $\mathcal R$.

Since the objective in \eqref{eq:population_relu_experiment} is an expectation, all function values and gradients are computed by Monte Carlo approximation.
For reporting $f(w)$, we use an independent batch of size $M_{\mathrm{eval}}=50{,}000$ and compute
\begin{equation}\label{eq:objective_mc_eval}
f(w) \approx \frac{1}{2M_{\mathrm{eval}}} \sum_{j=1}^{M_{\mathrm{eval}}}\bigl(\ReLU(\langle w,x_j\rangle)-\ReLU(\langle w^\ast,x_j\rangle)\bigr)^2.
\end{equation}
For a sampled batch $\{x_j\}_{j=1}^M$, we use the approximation
\begin{equation}\label{eq:mc_grad_formula}
\nabla f(w)\approx\frac{1}{M}\sum_{j=1}^M\bigl(\ReLU(\langle w,x_j\rangle)-\ReLU(\langle w^\ast,x_j\rangle)\bigr)\,g_{\ReLU}(\langle w,x_j\rangle)\,x_j,
\end{equation}
where $g_{\ReLU}(z)=1$ for $z>0$ and $g_{\ReLU}(0)=0$.
In the implementation, we use $M_{\mathrm{full}}=12{,}000$ for high-accuracy gradient evaluations and $M_{\mathrm{sgd}}=256$ for stochastic gradient steps.


In our comparison, we used the following methods:
\begin{itemize}
    \item \textbf{PGD}: Projected gradient descent 
    \begin{equation}\label{eq:pgd}
w^{k+1}=\Pi_{\mathcal R}\bigl(w^k-\alpha\,\widehat\nabla f(w^k)\bigr),
\end{equation}
where $\Pi_{\mathcal R}$ denotes projection onto $\mathcal R$, and $\widehat\nabla f(w^k)$ is the Monte Carlo gradient estimator from \eqref{eq:mc_grad_formula} using batch size $M_{\mathrm{full}}$. We use the constant step-size $\alpha_{\mathrm{PGD}}=0.12$.

\item \textbf{PSGD}: Projected stochastic gradient descent
\begin{equation}\label{eq:psgd}
w^{k+1}=\Pi_{\mathcal R}\bigl(w^k-\alpha_k\,\widehat\nabla f_{B_k}(w^k)\bigr),
\end{equation}
where $B_k$ is a mini-batch of size $M_{\mathrm{sgd}}=256$. We use a constant step-size $\alpha_{\mathrm{PSGD}}=0.07$.

\item\textbf{HiPPA}: We test two HiPPA variants, corresponding to $p=2$ and $p=3$. We use $\beta=0.8$.
The subproblem is solved numerically using \texttt{L-BFGS-B}. 
\end{itemize}

All methods are allowed to run for at most $2000$ iterations. They stop earlier whenever the relative error satisfies
\begin{equation}
\label{eq:stopping_relerr}
\RelErr(w^k):=\frac{\|w^k-w^\ast\|}{\|w^\ast\|}<10^{-2}.
\end{equation}

The results are displayed in Figure~\ref{fig:all_numerical_results}.
Subfigure~\ref{fig:all_numerical_results}(a) reports the objective value against elapsed time. This comparison is especially informative because each HiPPA iteration requires the solution of an inner optimization problem, while PGD and PSGD only need a gradient step. Even under this more demanding metric, HiPPA-$p=3$ remains the best method.  HiPPA-$p=2$ is also clearly competitive and outperforms both first-order baselines. PGD is substantially faster than PSGD in terms of practical decrease per unit time.
Subfigure~\ref{fig:all_numerical_results}(b) shows the relative error against elapsed time. The same ranking is observed. HiPPA-$p=3$ is the most efficient method in practice, followed by PSGD, then HiPPA-$p=2$,  and finally PGD. 
\begin{figure}[t]
    \centering

    \begin{subfigure}[t]{0.48\textwidth}
        \centering
        \includegraphics[width=\textwidth]{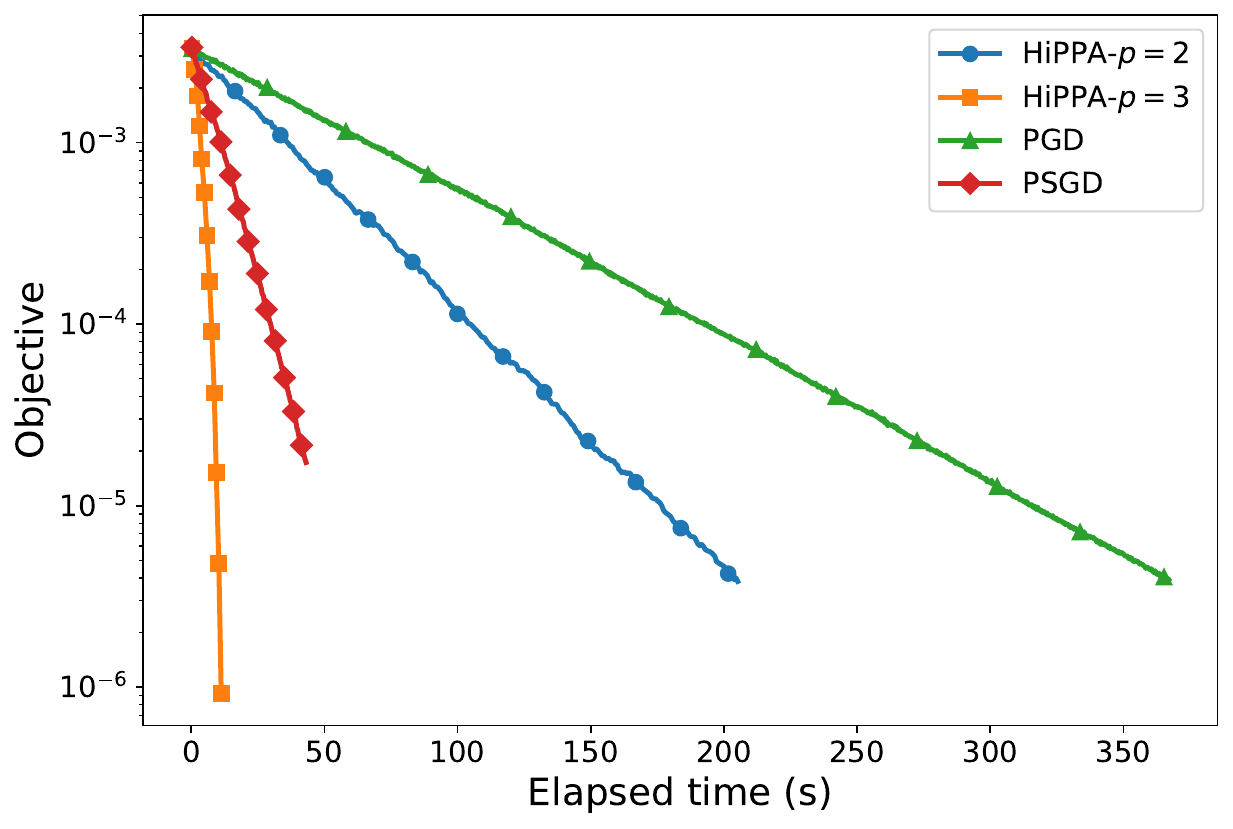}
        \caption{Objective value vs.\ elapsed time.}
        \label{fig:obj_time_combined_sub}
    \end{subfigure}
    \hfill
    \begin{subfigure}[t]{0.48\textwidth}
        \centering
        \includegraphics[width=\textwidth]{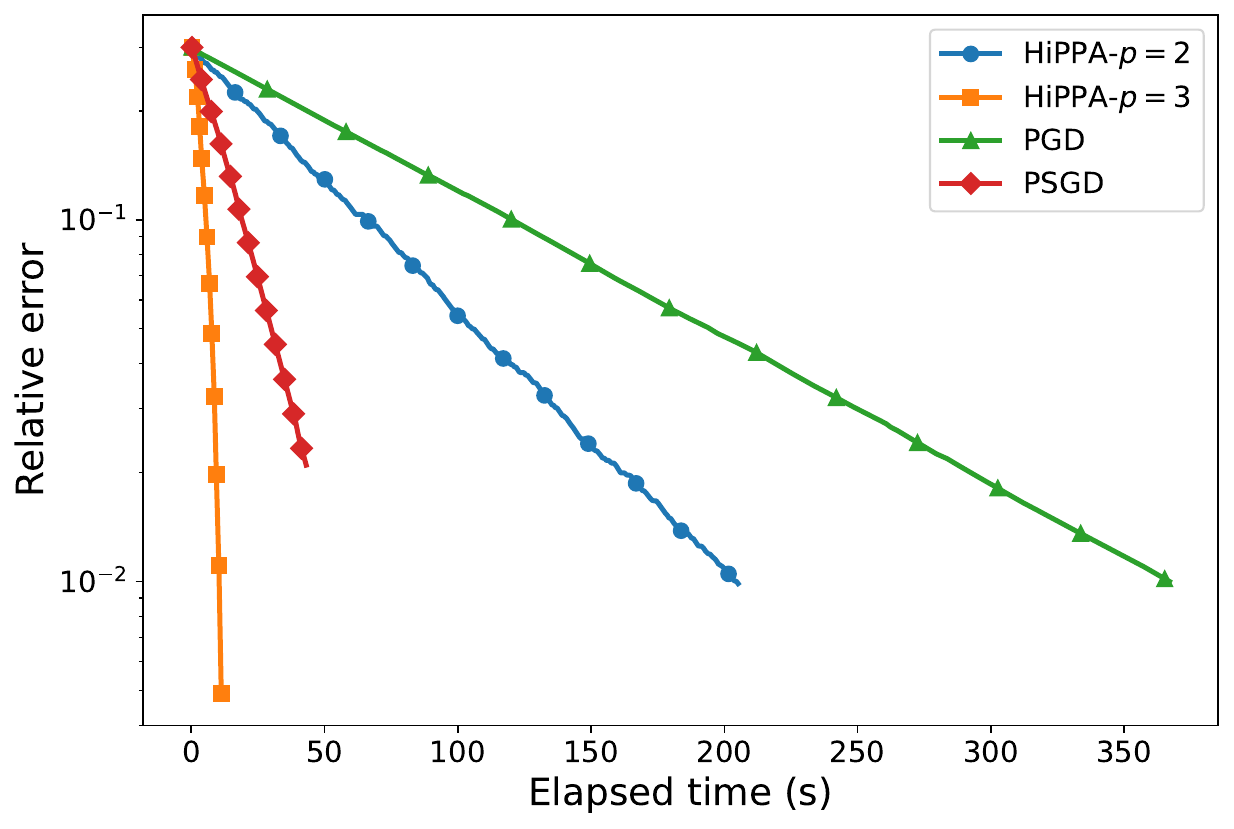}
        \caption{Relative error vs.\ elapsed time.}
        \label{fig:rel_time_combined_sub}
    \end{subfigure}

    \caption{Numerical comparison on the population ReLU-GLM problem.}
    \label{fig:all_numerical_results}
\end{figure}






\subsection{Application to robust multi-task regression}\label{sec:app:reg}
In this section, we consider a nonconvex and nonsmooth robust multi-task regression (RMTR) model and use it as a benchmark for HiPPA. 
We show that, although the model is neither convex nor strongly star-convex, it is strongly quasar-convex on bounded balls around the ground-truth matrix. Then, we describe the numerical implementation used in our experiments.


Let $x_i \in \R^d$ and $y_i \in \R^m$ for $i=1,\dots,N$, and let $W \in \R^{m\times d}$ denote the matrix of regression coefficients. We study the objective
\begin{equation}\label{eq:mtl-model}
\Phi(W):=\left(\frac{1}{N}\sum_{i=1}^N \|W x_i-y_i\|_2\right)^q,
\end{equation}
with $q\in(0,1)$. We assume the realizable model
\begin{equation}\label{eq:mtl-realizable}
y_i=W^\ast x_i,\qquad i=1,\dots,N,
\end{equation}
for some unknown ground-truth matrix $W^\ast \in \R^{m\times d}$. The inner loss
$W \mapsto \frac{1}{N}\sum_{i=1}^N \|W x_i-y_i\|_2$ 
is an $\ell_{2,1}$-type multi-task residual model. It couples the tasks through Euclidean norms of vector residuals and is therefore naturally suited to joint learning settings where several related prediction tasks are learned simultaneously. Such $\ell_{2,1}$-based formulations are widely used in multi-task feature learning and shared-sparsity modeling \cite{LiuJiYe2009Multi-task}. Moreover, robust multi-task regression models are especially relevant when responses may be contaminated by large corruptions or outliers \cite{XuLeng2012}. The additional exponent $q\in(0,1)$ in \eqref{eq:mtl-model} yields a genuinely nonconvex and nonsmooth extension of the classical convex baseline, in the same spirit as nonconvex multi-task feature learning models designed to sharpen convex surrogates \cite{Gong12Multi-Stage}.

Accordingly, \eqref{eq:mtl-model} should be interpreted as a nonconvex robust multi-task regression model. 
Throughout this section, we identify $\R^{m\times d}$ with the Euclidean space $\R^{md}$ endowed with the Frobenius norm
\[
\|W\|_F := \left(\sum_{j=1}^m \sum_{\ell=1}^d W_{j\ell}^2\right)^{1/2}.
\]
We also write $X := [x_1,\dots,x_N] \in \R^{d\times N}$
for the sample matrix and assume that $X$ has full row rank. For $R>0$, define
\begin{equation}\label{eq:KR}
K_R := \{W\in\R^{m\times d} : \|W-W^\ast\|_F \le R\}.
\end{equation}
The next proposition shows that the model \eqref{eq:mtl-model} is strongly quasar-convex on every bounded ball centered at $W^\ast$.
\begin{proposition}\label{prop:mtl-sqc}
Let $q\in(0,1)$, $W^\ast\in\R^{m\times d}$, and let $X=[x_1,\dots,x_N]\in\R^{d\times N}$ have full row rank.
Assume the model~\eqref{eq:mtl-realizable} and define the function $\Phi$ as \eqref{eq:mtl-model}.
Fix $R>0$ and let $K_R$ be given by~\eqref{eq:KR}. Then, for every $\kappa\in(0,q)$, the function $\Phi$ is $(\kappa,\gamma)$-strongly quasar-convex on $K_R$ with respect to $W^\ast$, where
\begin{equation}\label{eq:gamma-mtl}
\gamma=\frac{2(q-\kappa)}{\kappa}\,c_X^q\,R^{q-2},
\end{equation}
with
\[
c_X:=\mathop{\bs\min}\limits_{\|U\|_F=1}\frac{1}{N}\sum_{i=1}^N \|U x_i\|_2>0.
\]
Moreover, $\Phi$ is neither convex nor strongly star-convex.
\end{proposition}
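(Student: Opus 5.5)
The plan is to reduce everything to the one-dimensional behaviour of $\Phi$ along rays emanating from $W^\ast$. By \eqref{eq:mtl-realizable}, writing $U:=W-W^\ast$, we have
\[
\Phi(W)=g(U)^q,\qquad g(U):=\frac{1}{N}\sum_{i=1}^N\|Ux_i\|_2 .
\]
Here $g$ is a positively homogeneous (degree one) seminorm.

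\textbf{Step 1: $c_X>0$.} If $g(U)=0$, then $UX=0$, and full row rank of $X$ forces $U=0$. Since $g$ is continuous and the Frobenius unit sphere is compact, the minimum $c_X$ is attained and is positive. Consequently $g(U)\ge c_X\|U\|_F$.

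\textbf{Step 2: setting and target inequality.} The ball $K_R$ is convex. Moreover $\Phi\ge 0=\Phi(W^\ast)$, so $W^\ast$ is a minimizer.

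Fix $W\in K_R$ and $\lambda\in[0,1]$. Then $\lambda W^\ast+(1-\lambda)W=W^\ast+(1-\lambda)U$, and homogeneity gives
\[
\Phi\bigl(\lambda W^\ast+(1-\lambda)W\bigr)=(1-\lambda)^q\Phi(W).
\]
Since $\Phi(W^\ast)=0$, inequality \eqref{eq:def:quasar} reduces to
\[
\Phi(W)\bigl[(1-\kappa\lambda)-(1-\lambda)^q\bigr]\ \ge\ \lambda\Bigl(1-\frac{\lambda}{2-\kappa}\Bigr)\frac{\kappa\gamma}{2}\|U\|_F^2 .
\]

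\textbf{Step 3: bounding the left-hand side.} Concavity of $t\mapsto t^q$ gives $(1-\lambda)^q\le 1-q\lambda$, so the left-hand side is at least $(q-\kappa)\lambda\,\Phi(W)$.

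Next, $\Phi(W)\ge c_X^q\|U\|_F^q$. Because $\|U\|_F\le R$ and $q-2<0$, we have $\|U\|_F^{q}=\|U\|_F^{q-2}\|U\|_F^2\ge R^{q-2}\|U\|_F^2$. Hence
\[
\Phi(W)\ge c_X^qR^{q-2}\|U\|_F^2 .
\]

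\textbf{Step 4: matching the constant.} With $\gamma$ as in \eqref{eq:gamma-mtl} we have exactly $\frac{\kappa\gamma}{2}=(q-\kappa)c_X^qR^{q-2}$. Since $1-\frac{\lambda}{2-\kappa}\le 1$, the target inequality from Step 2 follows. The only delicate point is the choice of the factor $R^{q-2}$, which converts the $q$-growth into quadratic growth; this is exactly why the result is restricted to bounded balls.

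\textbf{Step 5: failure of convexity and star-convexity.} Pick any $V\neq0$ and consider the ray $t\mapsto W^\ast+tV$. Along it $\Phi=t^q g(V)^q$ for $t\ge0$, which is strictly concave and not convex on $(0,R/\|V\|_F]$, so $\Phi$ is not convex on $K_R$.

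For star-convexity, (strong) star-convexity with respect to $W^\ast$ (the case $\kappa=1$, even with $\gamma=0$) would require $(1-\lambda)^q\Phi(W)\le(1-\lambda)\Phi(W)$. This fails, because $(1-\lambda)^q>1-\lambda$ for $\lambda\in(0,1)$ and $\Phi(W)>0$ for $W\neq W^\ast$. Therefore $\Phi$ is not strongly star-convex either. I expect no real obstacle here; the main care point is Step 1, namely the positivity of $c_X$ via full row rank.
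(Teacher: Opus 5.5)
Your proposal is correct and follows essentially the same route as the paper's proof. Both arguments use the same ingredients:
\begin{itemize}
\item the radial identity $\Phi(\lambda W^\ast+(1-\lambda)W)=(1-\lambda)^q\Phi(W)$;
\item the tangent-line bound $(1-\lambda)^q\le 1-q\lambda$;
\item positivity of $c_X$ via compactness of the unit sphere and full row rank of $X$;
\item the conversion $\|W-W^\ast\|_F^q\ge R^{q-2}\|W-W^\ast\|_F^2$ on $K_R$, which produces exactly $\frac{\kappa\gamma}{2}=(q-\kappa)c_X^qR^{q-2}$.
\end{itemize}
The only cosmetic difference is how you rule out convexity. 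You use strict concavity along a ray, whereas the paper uses the single inequality $(1-\lambda)^q\Phi(W)>(1-\lambda)\Phi(W)$ to exclude convexity and star-convexity at once.
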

\begin{proof}
See Appendix~\ref{sec:app:a}.
\end{proof}

We now describe the numerical implementation associated with \eqref{eq:mtl-model}. 
Unless stated otherwise, we use the default parameters $d=100$, $m=5$, and $N=400$.
The ground-truth matrix is generated by drawing $\widetilde W^\ast$ with i.i.d.\ standard Gaussian entries and setting $W^\ast=2\,\frac{\widetilde W^\ast}{\|\widetilde W^\ast\|_F}$.
Hence $\|W^\ast\|_F=2$. The covariates $x_i\in\R^d$ are drawn independently from the standard Gaussian distribution. The responses are then generated through the realizable model
\[
y_i=W^\ast x_i,\qquad i=1,\dots,N.
\]
The initial point is chosen as $W^0=\sigma \widetilde W$,
where $\widetilde W$ has i.i.d.\ standard Gaussian entries and $\sigma=20$. Thus, the initialization is genuinely random and is not restricted to a neighborhood of the ground-truth matrix.
We define $\Psi$ as \eqref{eq:eq:Psi-mtl}.


In our comparison, we used the following methods:
\begin{itemize}
    \item \textbf{PSG}: Projected subgradient method with stepsize $\alpha_k=\frac{\alpha_0}{\sqrt{k}}$ with $\alpha_0=0.8$ \cite{beck2017first}.
    \item \textbf{PSSG} Projected stochastic subgradient method with  mini-batches $B_k$ of size $32$. We take $\alpha_k=\frac{\alpha_0}{\sqrt{k}}$ with $\alpha_0=0.8$ \cite{lacoste2012simpler}.
    \item \textbf{HiPPA}: We test two HiPPA variants corresponding to $p=2$ and $p=3$. The subproblem is solved approximately through a smoothed surrogate and numerically with L-BFGS-B. We set $\beta=0.05$.
\end{itemize}


All methods are allowed to run for at most $2000$ iterations. They stop earlier whenever the relative error satisfies
\begin{equation}
\label{eq:stopping_relerr}
\RelErr(W^k):=\frac{\|W^k-W^\ast\|_F}{\|W^\ast\|_F}<10^{-4}.
\end{equation}

Table~\ref{tab:mtl-results} and Figure~\ref{fig:mtl-results} show that HiPPA significantly outperforms the subgradient-based baselines. In particular, both HiPPA variants reach high-accuracy solutions much faster than PSG and PSSG in terms of wall-clock time. 
This behavior is clearly reflected in Figure~\ref{fig:mtl-results}, where the objective value and the relative error for HiPPA decrease rapidly, while PSG exhibits a slower decay and PSSG quickly stagnates. Overall, HiPPA demonstrates a clear advantage over standard first-order nonsmooth methods on this benchmark. Among the two variants, $p=3$ shows the fastest convergence in time.


\begin{table}[ht]
\centering
\caption{Performance comparison on the nonconvex multi-task regression benchmark.}
\label{tab:mtl-results}
\begin{tabular}{lcccc}
\hline
Method & Iteration to stop & Time (s) & Relative error & Objective value \\
\hline
PSG & 296 & 0.0899 & $1.858147\times 10^{-5}$ & $6.671428\times 10^{-3}$ \\
PSSG & 2000 & 0.1228 & $3.557654\times 10^{-2}$ & $2.605372\times 10^{-1}$ \\
HiPPA-\(p=2\) & 14 & 0.0593 & $7.077845\times 10^{-8}$ & $2.657486\times 10^{-4}$ \\
HiPPA-\(p=3\) & 4 & 0.0283 & $3.680355\times 10^{-9}$ & $8.169149\times 10^{-5}$ \\
\hline
\end{tabular}
\end{table}

\begin{figure}[t]
    \centering

    \begin{subfigure}[t]{0.48\textwidth}
        \centering
        \includegraphics[width=\textwidth]{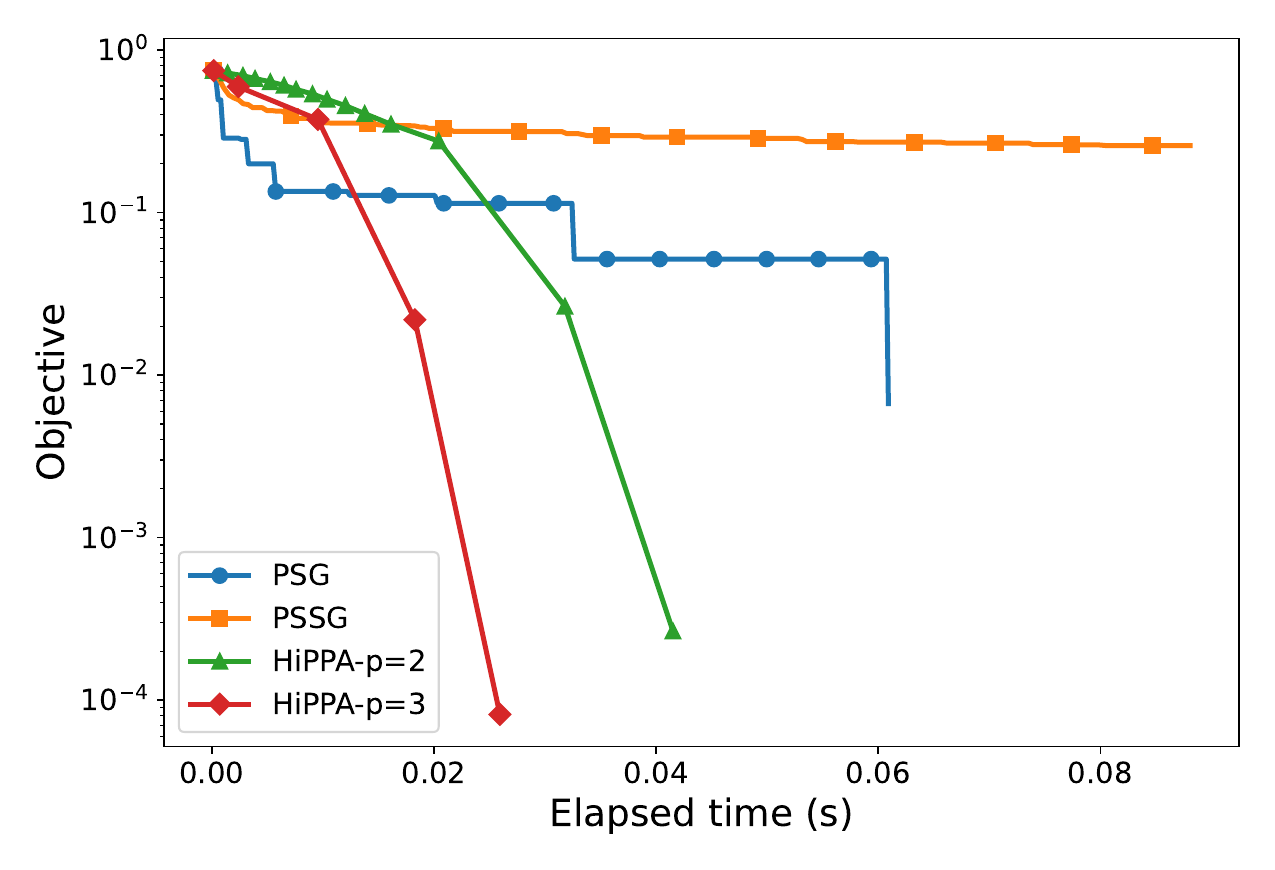}
        \caption{Objective value vs.\ elapsed time.}
    \end{subfigure}
    \hfill
    \begin{subfigure}[t]{0.48\textwidth}
        \centering
        \includegraphics[width=\textwidth]{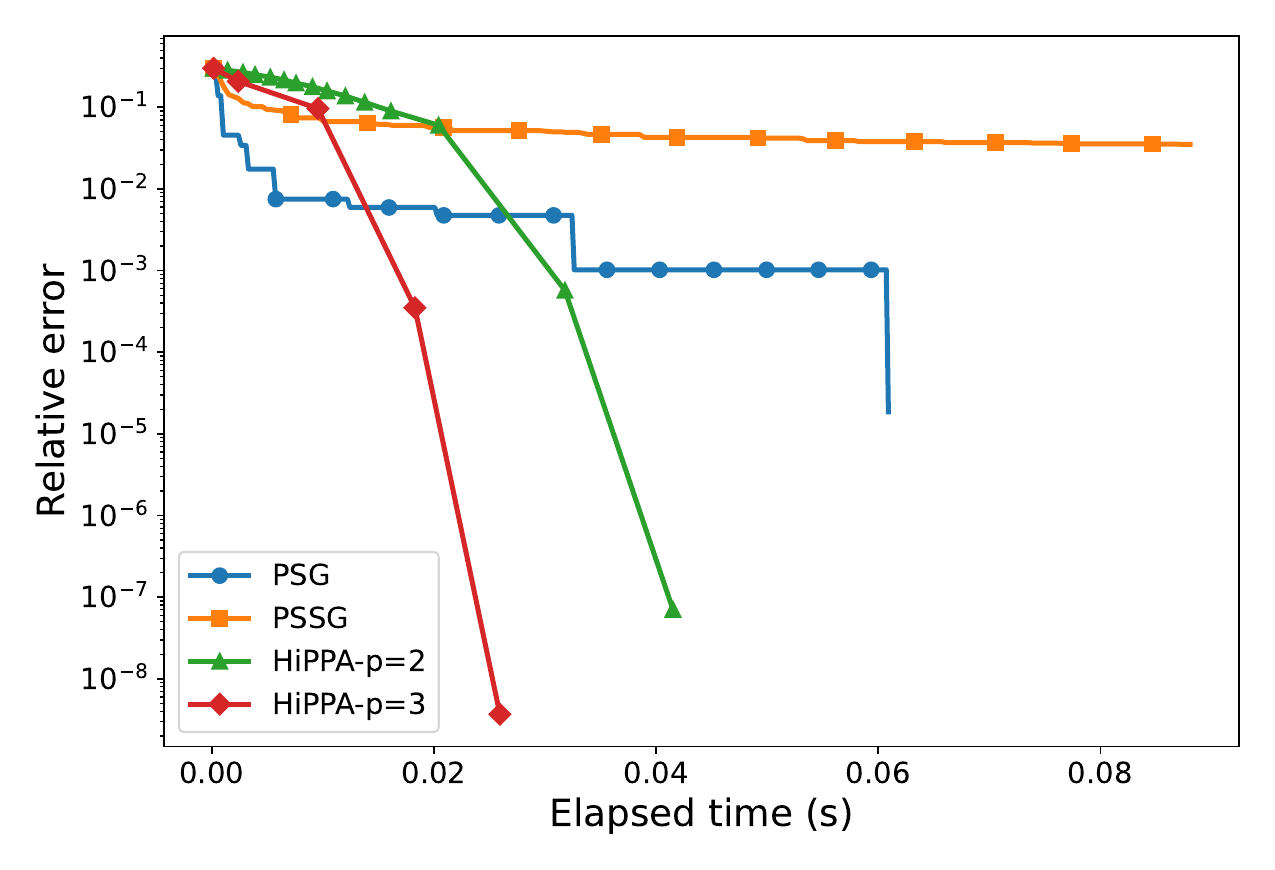}
        \caption{Relative error vs.\ elapsed time.}
    \end{subfigure}

    \caption{Numerical comparison on the nonconvex multi-task regression problem. }
    \label{fig:mtl-results}
\end{figure}





\section{Conclusion}\label{sec:conclusion}
In this paper, we considered the optimization of (strongly) quasar-convex functions. As a first step, we studied the fundamental properties of this class of functions, which pave the way for its applicability across many domains, including machine learning and data science. Next, we developed and analyzed high-order proximal-point methods (HiPPA with $p>1$) for this class of nonconvex optimization and established conditions ensuring convergence to global minimizers. We further derived convergence rates and complexity bounds, showing local linear convergence for $p\in(1,2)$, linear convergence for $p=2$. and superlinear convergence for $p>2$. Preliminary numerical results on a generalized linear model support the efficiency of our approach.

\vspace{4mm}
\noindent \textbf{Acknowledgments}
 MA and AK were partially supported by the Research Foundation Flanders (FWO) research project G081222N, UA BOF DocPRO4 project with ID 46929, and FWO travel grants K153324N and K152524N. FL was partially supported by ANID--Chile under project Fondecyt Regular 1241040. JY was partially supported by NSFC grant 12171087.

\appendix

\section{Proof of Theorem~\ref{thm:char:clarke}}\label{sec:app:a0}
\begin{proof}
    Assume first that $h$ is $(\kappa,\gamma)$-strongly quasar-convex with respect to $\overline x$. Fix 
 $x\in \dom{h}$, and let $L_x>0$ be a Lipschitz constant of $h$ on an open neighborhood $V_x$ of $x$. For $y \in V_x$ and $\lambda>0$ sufficiently small, we have $y+\lambda(\ov{x}-x)\in V_x$ and
\[
y_\lambda := y + \lambda (\overline{x} - y) = \lambda \overline{x} + (1-\lambda)y \in V_x.
\]
By the $(\kappa,\gamma)$-strong quasar-convexity of $h$, we obtain
\begin{equation}\label{eq:sqc-y}
 h(y_\lambda) \le \kappa \lambda h(\overline x) + (1-\kappa \lambda) h(y) - \lambda\Big( 1- \frac{\lambda}{2-\kappa} \Big) \frac{\kappa\gamma}{2} \|y - \overline x\|^2.
\end{equation}
Applying the Lipschitz continuity of $h$ on $V_x$ implies
\begin{equation}\label{lipschitz}
 h(y + \lambda (\overline{x}-x))-h(y_\lambda) \leq L_x \big\| y + \lambda (\overline{x}-x) - y_\lambda \| = L_x \lambda \|x-y\|.
\end{equation}
Adding \eqref{eq:sqc-y} and \eqref{lipschitz} yields
\begin{equation*}
 h(y+\lambda(\overline{x}-x)) \le \kappa\lambda h(\overline x) + (1-\kappa\lambda) h(y) - \lambda \Big( 1- \frac{\lambda}{2-\kappa} \Big) \frac{\kappa \gamma}{2} \|y-\overline x\|^2 + L_x \lambda \|y-x\|.
\end{equation*}
Subtracting $h(y)$ from both sides and dividing by $\lambda>0$, we obtain
\begin{equation}\label{eq:inc-dir-y}
 \frac{h(y+\lambda(\overline x-x))-h(y)}{\lambda} \le \kappa \big( h(\overline x)-h(y) \big) - \Big( 1-\frac{\lambda}{2-\kappa} \Big) \frac{\kappa \gamma}{2} \|y-\overline x\|^2 + L_x\|y-x\|.
\end{equation}
Taking $\limsup_{\lambda \downarrow 0,\, y \to x}$ and using the definition of the Clarke directional derivative, we get
$$ h^{\circ} (x; \overline{x} - x) \le \kappa \big( h(\overline x) - h(x) \big) - \frac{\kappa\gamma}{2} \|x-\overline x\|^2.$$
Together with \eqref{prop:direc}, this yields, for every $v \in \partial^C h(x)$,
$$ \langle v, \overline{x} - x \rangle \le \kappa \big( h(\overline{x}) - h(x) \big) - \frac{\kappa \gamma}{2} \|x-\overline x\|^2,
$$
which is equivalent to \eqref{eq:foC}.

Conversely, assume that \eqref{eq:foC} holds. Fix $x \in \dom{h}$, and define
$x_\lambda := \lambda \overline x+(1-\lambda)x$, and $g(\lambda) := h(x_\lambda) - h(\overline x)$ for
$\lambda \in [0,1]$.
Applying \eqref{eq:foC} at the point $x_\lambda$, we obtain, for every $v\in \partial^C h(x_\lambda)$,
\[
\langle v,\overline x-x_\lambda\rangle \le - \kappa g(\lambda) - \frac{\kappa \gamma}{2} \| x_\lambda - \overline x\|^2,
\]
Since $\overline x-x_\lambda=(1-\lambda)(\overline x-x)$, it follows that 
$$ \langle v, \overline x-x\rangle \le
-\frac{\kappa}{1 - \lambda} g(\lambda)-\frac{\kappa \gamma}{2} (1 - \lambda) \|x - \overline x\|^2.$$
By Lemma \ref{lem:clarke-chain}, for almost every $\lambda \in (0,1)$, there exists $\xi_\lambda \in \partial^C h(x_\lambda)$ such that $ g'(\lambda) = \langle \xi_\lambda, \overline x-x \rangle$. Hence, for almost every $\lambda \in (0,1)$,
\begin{equation} \label{eq:gprime-mod}
 g'(\lambda) \le - \frac{\kappa}{1-\lambda} g(\lambda) - \frac{\kappa \gamma}{2}(1-\lambda) \|x-\overline x\|^2.
\end{equation}
Since $g$ is absolutely continuous  and $\lambda\mapsto (1-\lambda)^{-\kappa}$ is smooth on $[0,1)$,  the function $\lambda\mapsto(1-\lambda)^{-\kappa} g(\lambda)$ is also absolutely continuous over every compact subinterval of $[0,1)$.
Applying \eqref{eq:gprime-mod}, for almost every $\lambda\in(0,1)$, we get
\begin{equation}\label{to:integ}
\frac{d}{d\lambda} \big((1-\lambda)^{-\kappa} g(\lambda)\big)=\kappa(1-\lambda)^{-\kappa-1}g(\lambda)+(1-\lambda)^{-\kappa}g'(\lambda)
\le - \frac{\kappa \gamma}{2} (1-\lambda)^{1-\kappa} \|x - \overline{x} \|^2.
\end{equation}
Integrating \eqref{to:integ} from $\lambda$ to $r\in(\lambda,1)$ yields
\[
(1-r)^{-\kappa}g(r)-(1-\lambda)^{-\kappa}g(\lambda)
\le -\frac{\kappa\gamma}{2}\int_\lambda^r (1-s)^{1-\kappa}\,ds\,\|x-\overline x\|^2,
\]
i.e.,
\[
(1-\lambda)^{-\kappa}g(\lambda)
\ge (1-r)^{-\kappa}g(r)+\frac{\kappa\gamma}{2}\int_\lambda^r (1-s)^{1-\kappa}\,ds\,\|x-\overline x\|^2.
\]
Since $\overline x\in \argmin{\mathbb R^n} h$ and $x_r\in \dom h$, we have $g(r)=h(x_r)-h(\overline x)\ge 0$. Therefore,
\[
(1-\lambda)^{-\kappa} g(\lambda) \ge \frac{\kappa \gamma}{2(2-\kappa)} \Big((1-\lambda)^{2-\kappa} - (1-r)^{2-\kappa} \Big) \|x-\overline{x}\|^2.
\]
Letting $r \uparrow 1$ in the previous equation ensures
\[
g(\lambda) \ge \frac{\kappa \gamma}{2 (2-\kappa)} (1-\lambda)^2 \|x-\overline x\|^2.
\]
Setting $\lambda=0$ leads particularly to
\begin{align}\label{eq:g0-lower}
g(0) \ge \frac{\kappa \gamma}{2(2-\kappa)} \|x-\overline x\|^2. 
\end{align} 
Integrating \eqref{to:integ} from $0$ to $\lambda$ yields
\begin{equation} \label{eq:g-upper}
 g(\lambda) \le (1-\lambda)^\kappa g(0) - \frac{ \kappa \gamma}{2 (2-\kappa)} ( (1-\lambda)^{\kappa} - (1-\lambda)^2) \|x-\overline{x} \|^2,
\end{equation}
i.e.,\[
g(\lambda) \le (1-\lambda)^\kappa \left( g(0)-\frac{\kappa\gamma}{2 (2-\kappa)} \|x-\overline x\|^2 \right) + \frac{\kappa \gamma}{2 (2-\kappa)} (1-\lambda)^2 \|x-\overline x\|^2.
\]
By \eqref{eq:g0-lower}, the term in parentheses is nonnegative. 
By Bernoulli's inequality, $(1-\lambda)^\kappa \le 1-\kappa \lambda$ for every $\kappa \in (0,1]$ and every $\lambda \in [0,1]$, i.e.,
\begin{align}
 g(\lambda) &\le (1-\kappa \lambda) \left( g(0) - \frac{\kappa \gamma}{2 (2-\kappa)} \|x-\overline x\|^2 \right) + \frac{\kappa\gamma}{2(2-\kappa)}(1-\lambda)^2 \|x-\overline x\|^2 \notag \\
 & =  (1-\kappa \lambda)g(0) - \frac{ \kappa \gamma}{2(2-\kappa)} ((1 - \kappa \lambda ) - (1 - \lambda)^2) \|x-\overline x\|^2. \label{to:conclude}
\end{align} 
A direct computation shows that
\[
(1 - \kappa \lambda) - (1-\lambda)^2
= (2-\kappa)\lambda\left(1-\frac{\lambda}{2-\kappa}\right).
\]
Substituting into \eqref{to:conclude} and recalling that
$g(0)=h(x)-h(\overline x)$ leads to
\[
h(\lambda\overline x + (1-\lambda)x) \le \kappa \lambda h(\overline x) + (1 - \kappa \lambda) h(x) - \lambda \left(1-\frac{\lambda}{2-\kappa} \right) \frac{\kappa\gamma}{2}\|x-\overline x\|^2,
\]
thereby establishing the $(\kappa,\gamma)$-strong quasar-convexity of $h$ with respect to $\overline{x}$.
\end{proof}

\section{Proof of Proposition~\ref{prop:mtl-sqc}}
\label{sec:app:a}
\begin{proof}
Let us define
\begin{equation}\label{eq:eq:Psi-mtl}
\Psi(W):=\frac{1}{N}\sum_{i=1}^N \|W x_i-y_i\|_2=\frac{1}{N}\sum_{i=1}^N \|(W-W^\ast)x_i\|_2,
\end{equation}
i.e., $\Phi(W)=\Psi(W)^q$. We first show that $W^\ast$ is the unique minimizer of $\Phi$. Since $y_i=W^\ast x_i$, we have $\Phi(W^\ast)=0$. If $\Phi(W)=0$, then $\Psi(W)=0$, and therefore
\[
(W-W^\ast)x_i=0,\qquad i=1,\ldots,N,
\]
i.e., $(W-W^\ast)X=0$. Since the matrix $X$ has full row rank, it follows that $W=W^\ast$. Hence $\argmin{} \Phi=\{W^\ast\}$.
Next, let $\lambda\in[0,1]$ and define $W_\lambda:=\lambda W^\ast+(1-\lambda)W$. Then, it follows that
\[
\Psi(W_\lambda)=\frac{1}{N}\sum_{i=1}^N \|(W_\lambda-W^\ast)x_i\|_2=(1-\lambda)\Psi(W),
\]
i.e.,
\begin{equation}\label{eq:radial-identity-mtl}
\Phi(W_\lambda)=(1-\lambda)^q\Phi(W).
\end{equation}
Now, we consider the function
\[
U\mapsto \frac{1}{N}\sum_{i=1}^N \|U x_i\|_2,
\]
on the compact unit sphere $\{U\in\R^{m\times d}:\|U\|_F=1\}$, which is continuous, i.e., it attains its minimum. If the minimum is zero, there exists $U\neq 0$ with $\|U\|_F=1$ such that
\[
U x_i=0,\qquad i=1,\dots,N,
\]
or equivalently $UX=0$. Since $X$ has full row rank, its left kernel is trivial, which is impossible, i.e.,
\[
c_X:=\mathop{\bs\min}\limits_{\|U\|_F=1}\frac{1}{N}\sum_{i=1}^N \|U x_i\|_2>0.
\]
Now, let $W\neq W^\ast$ and let us define
\[
U:=\frac{W-W^\ast}{\|W-W^\ast\|_F}.
\]
where $\|U\|_F=1$, and it follows from the definition of $c_X$ that
\[
\Psi(W)=\|W-W^\ast\|_F\,\frac{1}{N}\sum_{i=1}^N \|U x_i\|_2\ge c_X\|W-W^\ast\|_F,
\]
i.e., $\Phi(W)\ge c_X^q\|W-W^\ast\|_F^q$. If $W\in K_R$, then $\|W-W^\ast\|_F\le R$, and since $q-2<0$,
\[
\|W-W^\ast\|_F^q=\|W-W^\ast\|_F^2\|W-W^\ast\|_F^{q-2}\ge R^{q-2}\|W-W^\ast\|_F^2,
\]
leading to
\begin{equation}\label{eq:quad-lb-mtl}
\Phi(W)\ge c_X^q R^{q-2}\|W-W^\ast\|_F^2, \qquad \forall W\in K_R.
\end{equation}
Now, let us fix $\kappa\in(0,q)$ and let $\lambda\in[0,1]$. Since $q\in(0,1)$, the function $t\mapsto t^q$ is concave on $[0,1]$, and thus
$(1-\lambda)^q\le 1-q\lambda$.
Together with \eqref{eq:radial-identity-mtl}, this implies
\[
\Phi(W_\lambda)\le (1-q\lambda)\Phi(W)=(1-\kappa\lambda)\Phi(W)-(q-\kappa)\lambda\Phi(W).
\]
Using \eqref{eq:quad-lb-mtl}, it follows that
\[
\Phi(W_\lambda)\le(1-\kappa\lambda)\Phi(W)-(q-\kappa)\lambda c_X^q R^{q-2}\|W-W^\ast\|_F^2.
\]
Since $1-\frac{\lambda}{2-\kappa}\le 1$, the choice of $\gamma$ in \eqref{eq:gamma-mtl} yields
\[
(q-\kappa)\lambda c_X^q R^{q-2}\|W-W^\ast\|_F^2\ge\lambda\left(1-\frac{\lambda}{2-\kappa}\right)\frac{\kappa\gamma}{2}\|W-W^\ast\|_F^2,
\]
i.e.,
\[
\Phi(\lambda W^\ast+(1-\lambda)W)\le\kappa\lambda\Phi(W^\ast)+(1-\kappa\lambda)\Phi(W)-
\lambda\left(1-\frac{\lambda}{2-\kappa}\right)\frac{\kappa\gamma}{2}\|W-W^\ast\|_F^2.
\]
Since $\Phi(W^\ast)=0$, this is exactly the definition of $(\kappa,\gamma)$-strong quasar-convexity on $K_R$ with respect to $W^\ast$.
Finally, let $W\neq W^\ast$ and $\lambda\in(0,1)$. By \eqref{eq:radial-identity-mtl}, it follows that
\[
\Phi(\lambda W^\ast+(1-\lambda)W)=(1-\lambda)^q\Phi(W).
\]
Since $q\in(0,1)$, we have $(1-\lambda)^q>(1-\lambda)$, i.e.,
\[
\Phi(\lambda W^\ast+(1-\lambda)W)>(1-\lambda)\Phi(W)=\lambda\Phi(W^\ast)+(1-\lambda)\Phi(W).
\]
Therefore, $\Phi$ is not convex. In particular, it is not star-convex with respect to $W^\ast$, and therefore cannot be strongly star-convex.
\end{proof}

\bibliographystyle{siamplain}
\bibliography{references.bib}

\end{document}